\theoremstyle{definition}
\newtheorem{mydef}{Definition}[section]
\newtheorem{lem}[mydef]{Lemma}
\newtheorem{thm}[mydef]{Theorem}
\newtheorem{cor}[mydef]{Corollary}
\newtheorem{question}[mydef]{Question}
\newtheorem{prop}[mydef]{Proposition}
\newtheorem{defin}[mydef]{Definition}
\newtheorem{remark}[mydef]{Remark}
\newtheorem{notation}[mydef]{Notation}
\newtheorem{fact}[mydef]{Fact}
\newcommand{\fct}[2]{{}^{#1}#2}
\newcommand{\ba}{\bar{a}}
\newcommand{\bb}{\bar{b}}
\newcommand{\bc}{\bar{c}}
\newcommand{\bigN}{\widehat{N}}
\newcommand{\bigf}{\widehat{f}}
\newcommand{\dom}[1]{\text{dom}(#1)}
\newcommand{\cf}[1]{\text{cf} (#1)}
\newcommand{\seq}[1]{\langle #1 \rangle}
\newcommand{\rest}{\upharpoonright}
\newcommand{\s}{\mathfrak{s}}
\newcommand{\ts}{\mathfrak{t}}
\newcommand{\id}{\text{id}}
\newcommand{\hatr}{\widehat{r}}
\def\lta{\precneqq}
\def\lea{\prec}
\def\gta{\succneqq}
\def\gea{\succ}
\newbox\noforkbox \newdimen\forklinewidth
\noforkbox\hbox{\lower 2pt\box1\lower
2pt\box0\relax}
\def\unionstick{\mathop{\copy\noforkbox}\limits}
\newcommand{\nf}{\unionstick}
\newcommand{\nfs}[4]{#2 \nf_{#1}^{#4} #3}
\newcommand{\tp}{\text{tp}}
\newcommand{\gtp}{\text{gtp}}
\newcommand{\Ss}{S}
\newcommand{\Sna}{S^\text{na}}
\newcommand{\Sbs}{S^\text{bs}}
\newcommand{\Sana}[1]{S^{#1,\text{na}}}
\newcommand{\Sabs}[1]{S^{#1,\text{bs}}}
\newcommand{\F}{\mathcal{F}}
\newcommand{\LS}{\text{LS}}
\newcommand{\OR}{\text{OR}}
\newcommand{\goodm}{\text{good}^-}
\newcommand{\goodms}[1]{\text{good}^{-#1}}
\title{Tameness and frames revisited}
\date{\today\\
AMS 2010 Subject Classification: Primary 03C48. Secondary: 03C47, 03C52, 03C55.} 
\author{Will Boney}
\email{wboney@math.harvard.edu}
\urladdr{http://math.harvard.edu/\textasciitilde wboney/}
\address{Mathematics Department, Harvard University, Cambridge, Massachusetts, USA}
\author{Sebastien Vasey}
\email{sebv@cmu.edu}
\urladdr{http://math.cmu.edu/\textasciitilde svasey/}
\address{Department of Mathematical Sciences, Carnegie Mellon University, Pittsburgh, Pennsylvania, USA}
\thanks{This material is based upon work done while the first author was supported by the National Science Foundation under Grant No. DMS-1402191 and the second author was supported by the Swiss National Science Foundation under Grant No.\ 155136.}
\begin{document}

\begin{abstract}
We study the problem of extending an abstract independence notion for types of singletons (what Shelah calls a good frame) to longer types. Working in the framework of tame abstract elementary classes, we show that good frames can always be extended to types of independent sequences. As an application, we show that tameness and a good frame imply Shelah's notion of dimension is well-behaved, complementing previous work of Jarden and Sitton. We also improve a result of the first author on extending a frame to larger models. 
\end{abstract}

\maketitle

\tableofcontents

\section{Introduction}

Good $\lambda$-frames are an axiomatic notion of independence in abstract elementary classes (AECs) introduced by Shelah \cite[Chapter II]{shelahaecbook}. They are one of the main tools in the classification theory of AECs. They describe a relation ``$p$ does not fork over $M$'' for certain types of singletons over models of size $\lambda$. The frame's nonforking relation is required to satisfy properties akin to those of forking in a first-order superstable theory. The definition can be generalized to that of a good $(<\alpha, [\lambda, \theta))$-frame, where instead of types of singletons one allows types of sequences of less than $\alpha$-many elements, and instead of the models being of size $\lambda$, one allows their size to lie in the interval $[\lambda, \theta)$.

    There are at least two questions one can ask about frames: first, under what hypotheses do they exist? Second, can we extend them? That is, assuming there is a frame can we extend it to give a nonforking definition for larger models or longer\footnote{The length of a type is the length or indexing set of a tuple that satisfies it.  See Section \ref{galdef-sec} for a definition.} types?

    Shelah tackles these problems in \cite[Chapters II and III]{shelahaecbook}, but the answers use strong model-theoretic hypotheses (typically categoricity in two successive cardinals $\lambda$ and $\lambda^+$ together with few models in $\lambda^{++}$), as well as set-theoretic hypotheses (like the weak generalized continuum hypothesis, $2^\lambda < 2^{\lambda^+}$)\footnote{Shelah also looks at the existence problem in a more global setup and in ZFC in \cite[Chapter IV]{shelahaecbook} but does not study the extension problem there.}.

    Recently, the two questions above have been studied in the framework of \emph{tame} AECs. Tameness is a locality property of AECs isolated by Grossberg and VanDieren \cite{tamenessone} from an argument in \cite{sh394}. Grossberg and VanDieren have shown \cite{tamenesstwo, tamenessthree} that Shelah's eventual categoricity conjecture from a successor holds in tame AECs, and the first author \cite{tamelc-jsl} (building on work of Makkai-Shelah \cite{makkaishelah}) has shown that tameness follows from a large cardinal axiom. Many examples of interest are also known to be tame.

    Under tameness, the second author has shown that frames exist in ZFC assuming a reasonable categoricity hypothesis \cite{ss-tame-jsl}, and the first author has shown \cite{ext-frame-jml} that frames can be extended to larger \emph{models} under the assumption of tameness for types of length two. In this paper, we further study the frame extension question in tame AECs. We look at the problem of \emph{elongating} the frame: extending it to longer \emph{types}. 

Let us give discuss a natural approach to the problem and its shortcomings. In stable first-order theories, we have that $ab \nf_A B$ if and only if $a \nf_{A} B$ and $b \nf_{Aa} Ba$. One might think that this allows us to define forking for types of all lengths if we have a definition of forking for singleton types (as in \cite{dep-pregeom}). However, this turns out not to work in full generality, as good frames only define forking over \emph{models}\footnote{Note that an example of Shelah (see \cite[Section 4]{hyttinen-lessmann}) shows that there exists a superstable homogeneous diagram where extension (over sets) fails for any reasonable independence notion.}. We might want to say that $ab \nf_{M} N$ if and only if there are $M \prec M' \prec N'$ with $N \prec N'$ and $a \in M'$ such that $a \nf_M N$ and $b \nf_{M'} N'$.  This means that a choice must be made for the models $N'$ and, especially, $M'$ and this choice can cause problems.  In particular, if $M'$ is too big, then uniqueness of nonforking extensions can fail.  This does not cause issues in the first-order context essentially because there is a prime/minimal set containing $A$ and $a$ (namely $Aa$).

There are two options to work around this issue.  The first option is to assume the existence of a \emph{unique} prime/minimal extension of $Ma$; Shelah says that the frame is \emph{weakly successful} \cite[Definition III.1.1]{shelahaecbook} if this is the case. Shelah proved \cite[Section II.6]{shelahaecbook} that weakly successful frames can be elongated as desired without any assumption of tameness. Shelah has also shown \cite[II.5]{shelahaecbook} that a good $\lambda$-frame is weakly successful when the underlying AEC has few models in $\lambda^{++}$ and certain set-theoretic hypotheses hold. It is not known whether being weakly successful follows from tameness\footnote{After the initial submission of this paper, it has been shown that being weakly successful follows from a stronger locality property: full tameness and shortness \cite[Section 11]{indep-aec-apal}.}.

The second option is to strengthen the condition on nonforking, essentially setting $ab \nf_A B$ if and only if $a \nf_A B$ and $b \nf_A Ba$ (the noncanonical choice of a cover for $Ba$ is less important).  This loses some information about nonforking, so only works for certain kinds of types: types of independent sequences. As we show, this has the advantage of working in a larger class of AECs, i.e.\ those with frames that are not weakly successful, although we do assume tameness to prove the symmetry property.

This brings us to the precise statement of the main result of this paper.
    
\begin{thm}\label{main-thm}
Let $K$ be an AEC with amalgamation and $\F = [\lambda, \theta)$ be an interval of cardinals.
\begin{enumerate}
	\item Assume $\s$ satisfies the axioms of a good $\F$-frame, except possibly symmetry. Then $\s$ can be extended to a certain frame $\s^{<\theta}$ which satisfies the axioms of a good $(<\theta, \F)$-frame, except possibly for symmetry.
	\item If $K$ is $\lambda$-tame, then both $\s$ and $\s^{<\theta}$ also satisfy symmetry.
\end{enumerate}
\end{thm}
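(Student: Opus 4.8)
For part (1), the plan is to define the elongated nonforking relation $\s^{<\theta}$ directly from $\s$ and then verify each good-frame axiom (other than symmetry) by reducing it, stage by stage, to the corresponding axiom for $\s$. I would call a type $\gtp(\bar a / M; N)$, with $\bar a = \seq{a_i : i < \alpha}$ and $\alpha < \theta$, a basic nonforking type over a base $M_0 \prec M$ when there is a continuous increasing chain $\seq{N_i : i \le \alpha}$ of models in $K$, all of size in $\F$, with $N_0 = M$, each $N_i$ inside a common extension of $N$, $a_i \in N_{i+1}$, and $\gtp(a_i / N_i; N_{i+1})$ basic and $\s$-nonforking over $M_0$ for every $i < \alpha$. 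Thus the basic types of $\s^{<\theta}$ are exactly the types of sequences that are $\s$-independent over their base. Invariance and monotonicity are then immediate, and I would read off the remaining axioms from the analogous ones for $\s$ along such a resolution.

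The substantive work in part (1) is to show that the relation is well defined (independent of the chosen witnessing chain) and that uniqueness holds. For this I would prove, by induction on $\alpha$, that if $\bar a$ and $\bar a'$ are two $\s$-independent sequences over $M_0$ realizing the same type over $M$ and both enumerating nonforking types, then $\gtp(\bar a / M; N) = \gtp(\bar a' / M; N)$: at each stage one matches $a_i$ with $a'_i$ using uniqueness for $\s$, and one uses the resulting isomorphism to push the induction hypothesis forward. Existence and extension are handled by building the chain one coordinate at a time with the extension property of $\s$, taking unions at limits and using the interval structure of $\F$ together with the axioms of $K$ to keep every model in the class; local character and continuity transfer the same way, while transitivity of $\s^{<\theta}$ is essentially the concatenation of witnessing chains, i.e.\ long transitivity of the underlying independence notion. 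The main obstacle here is bookkeeping: making sure that the witnessing models can always be taken inside a common ambient model and that the uniqueness induction survives at limit stages.

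For part (2) the real content is that $\lambda$-tameness forces symmetry, both for $\s$ and for $\s^{<\theta}$. I would argue by contradiction from a failure of symmetry, manufacturing a long independent sequence that records the asymmetry and then contradicting local character. Suppose $\gtp(a/Mb)$ does not $\s$-fork over $M$ while, witnessing the failure of symmetry, $b$ cannot be pulled into a model over which $a$ stays nonforking. Using extension and the independence machinery of part (1), I would build a long $\s^{<\theta}$-independent sequence $\seq{b_i : i < \kappa}$ over $M$ for $\kappa$ a suitably large cardinal above $\lambda$, arranged so that each $b_i$ plays the role of the bad witness for $a$; then $\gtp(a/\bigcup_i b_i)$ would fork over $M$ at unboundedly many of the submodels generated along the way, contradicting the local character of $\s$. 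Tameness is exactly what lets this contradiction close: it reduces the type of $a$ over the large union to its restrictions to models of size $\lambda$, so that the $\lambda$-frame's local character and uniqueness apply to the long configuration. I expect this step, organizing the long sequence so that the asymmetry is faithfully propagated and tameness can be invoked to localize the forking, to be the crux of the whole argument.
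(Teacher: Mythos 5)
Your part (1) is essentially the paper's route: basic types are defined as types of $\s$-independent sequences, existence is built coordinate-by-coordinate with unions at limits, local character and continuity transfer along the resolution, and well-definedness/uniqueness is proved by an inductive coordinate-matching amalgamation of two witnessing chains (this is the paper's key Lemma \ref{amal-indep-seq}). That part is sound, modulo the bookkeeping you acknowledge.

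Part (2) has a genuine gap. Your mechanism --- build a long independent sequence $\seq{b_i : i < \kappa}$ of ``bad witnesses'' and contradict local character because $\gtp(a/\bigcup_i b_i)$ forks unboundedly often --- does not work as stated. First, forking in a frame is only defined over \emph{models}, so the configuration has to be phrased over a chain $\seq{M_i}$ with $b_{<i} \in M_i$; more seriously, the failure of symmetry for a single pair $(a,b)$ over $M$ does not propagate to ``$\tp(a/M_{i+1})$ forks over $M_i$ for unboundedly many $i$.'' If you try to run the induction you find you need to already know something like symmetry (or stationarity of the $b_i$-types over the growing models) to transfer the bad behaviour from stage to stage; transitivity and uniqueness alone do not close the loop, and no direct ``failure of symmetry contradicts local character'' argument is known in this setting. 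The paper's actual argument is different in kind: tameness is used first to extend the frame to arbitrarily large models (this is where $\lambda$-tameness genuinely enters, via uniqueness transferring up --- not to ``localize the forking'' of $a$ over a big set); then one computes stability for $\lambda$-types from uniqueness and local character; and then one quotes the theorem that failure of symmetry yields the order property, which yields instability in $2^\lambda$ --- contradiction. Without the order-property step (or some substitute for it), your sketch does not produce a contradiction. Separately, you would still need to pass from symmetry of $\s$ to symmetry of $\s^{<\theta}$; the paper does this by showing symmetry of the elongation is equivalent to invariance of independence under re-enumeration, proved by induction on the order type using concatenation and continuity, and this step is also absent from your outline.
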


The following two questions are open.
\begin{question} \
\begin{itemize}
	\item If $\s$ is a good frame in a tame AEC, must $\s$ be weakly successful?
	\item Is there an example of a good $\lambda$-frame (necessarily not weakly successful) that has no better extension to longer types than independent sequences?
\end{itemize}
\end{question}

As has already been alluded to, in Theorem \ref{main-thm} the frame $\s$ is elongated by use of \emph{independent sequences} (see Definition \ref{indep-seq-def} here, or Shelah \cite[Definition III.5.2]{shelahaecbook}).  Independent sequences in that context have been previously studied by Shelah \cite[III.5]{shelahaecbook} and Jarden and Sitton \cite{jasi}. Throughout these studies, several additional assumptions have appeared--such as $\s$ being weakly successful or having continuity of serial independence\footnote{This and other variations on continuity are defined and explored in Section \ref{continuity-subsec}.}--that we are able to eliminate or replace with the hypothesis of tameness.

We present two applications of Theorem \ref{main-thm}.  The first involves a natural notion of dimension that Shelah introduced with the goal of building a theory of regular types for AECs \cite[Definition III.5.12]{shelahaecbook}: Let us define the dimension of a type $p$ in an ambient model $N$, $\dim (p, N)$ to be the size of a maximal independent set of realizations of $p$ in $N$.  In the first-order case, Shelah \cite[III.4.21.(2)]{shelahfobook} shows that, under stability, every infinite maximal independent set of realizations of $p$ has the same size.  In the AEC framework, Shelah \cite[III.5.14]{shelahaecbook} first showed that this held when the frame is weakly successful, and Jarden and Sitton \cite{jasi} have refined these hypotheses. The analysis of this paper allows us to show that the dimension is well-behaved in any tame AEC with a good frame (see Corollary \ref{good-dimension} and the surrounding discussion). This gives a natural nonelementary framework in which a theory of regular types could be studied.

The second application involves the project of extending a frame to larger \emph{models} using tameness. As mentioned above, the first author has shown that this is possible if one assumes tameness for types of length two. Analyzing the elongations of frames allows us to give an aesthetic improvement: we remove this strange assumption and replace it with only tameness for types of length one (see Corollary \ref{will-transfer-improved} and the preceding discussions). While no example of an AEC that is tame for types of length one and not for length two is known, thinking about this statement led us to the main theorem of this paper. Further, we are told that Rami Grossberg conjectured Corollary \ref{will-transfer-improved} already in 2006 (he told it to Adi Jarden and John Baldwin); our result proves Grossberg's original conjecture.

Since this paper was first circulated (June 2014), several applications of Corollary \ref{will-transfer-improved} have been found. They include Shelah's eventual categoricity conjecture for universal classes \cite{ap-universal-v10, categ-universal-2-v2-toappear}, as well as a downward categoricity transfer for tame AECs \cite{downward-categ-tame-apal} (the latter actually uses the theory of independent sequences in good frames developped in Section \ref{indep-seq-good}). In \cite{counterexample-frame-v2}, the authors show that a natural good frame appearing in the Hart-Shelah example is not weakly successful, and in \cite{quasimin-aec-v3} the second author studies an example of Shelah where a good frame cannot be extended to all types. These examples show that this paper \emph{strictly} generalizes Shelah's study of independent sequences in \cite[Section III.5]{shelahaecbook}.

The paper is structured as follows. In Section \ref{prelim}, we review background in the theory of AECs. In Section \ref{good-frame}, we give the definition of good frames and prove some easy general facts. In Section \ref{indep-seq-good}, we define independent sequences and show how to use them to extend a frame for types of singletons to a frame for longer types. We show all properties are preserved in the process, except perhaps symmetry. In Section \ref{sym-long-frames}, we give conditions under which symmetry also transfers and show how to use it to define a well-behaved notion of dimension. In Section \ref{going-up}, we prove the promised applications to dimension and tameness.

This paper was written while the authors were working on Ph.D.\ theses under the direction of Rami Grossberg at Carnegie Mellon University and they would like to thank Professor Grossberg for his guidance and assistance in our research in general and in this work specifically. The authors would also like to thank the referee for their helpful report that greatly assisted the clarity and presentation of this paper.

\section{Preliminaries}\label{prelim}

\subsection{Abstract elementary classes}

We assume the reader is familiar with the definition of an abstract elementary class (AEC) and the basic related concepts. See Grossberg's \cite{grossberg2002} or Baldwin's \cite{baldwinbook09} for an introduction to AECs.  A more advanced introduction to frames can be found in \cite[Chapter II]{shelahaecbook} 

For the rest of this section, fix an AEC $K$. We denote the partial ordering on $K$ by $\lea$, and write $M \lta N$ if $M \lea N$ and $M \neq N$. 

For $K$ an abstract elementary class and $\F$ an interval\footnote{The definitions that follow make sense for an arbitrary set of cardinals $\F$, but the proofs of most of the facts below require that $\F$ is an interval.} of cardinals of the form $[\lambda, \theta)$, where $\theta > \lambda \ge \LS (K)$ is either a cardinal or $\infty$, let $K_\F := \{M \in K \mid \|M\| \in \F\}$. We write $K_\lambda$ instead of $K_{\{\lambda\}}$, $K_{\ge \lambda}$ instead of $K_{[\lambda, \infty)}$ and $K_{\le \lambda}$ instead of $K_{[\LS (K), \lambda]}$.

The following properties of AECs are classical:

\begin{defin}
  Let $\F$ be an interval of cardinals as above.
  \begin{enumerate}
    \item $K_{\F}$ has \emph{amalgamation} if for any $M_0 \lea M_\ell \in K_{\F}$, $\ell = 1,2$ there exists $N \in K_{\F}$ and $f_\ell : M_\ell \xrightarrow[M_0]{} N$, $\ell = 1,2$.
    \item $K_{\F}$ has \emph{joint embedding} if for any $M_\ell \in K_{\F}$, $\ell = 1,2$ there exists $N \in K_{\F}$ and $f_\ell : M_\ell \rightarrow N$, $\ell = 1,2$.
    \item $K_{\F}$ has \emph{no maximal models} if for any $M \in K_{\F}$ there exists $N \gta M$ in $K_{\F}$.
  \end{enumerate}
\end{defin}




\subsection{Galois types, stability, and tameness}\label{galdef-sec}

We assume familiarity with Galois types (see \cite[Section 6]{grossberg2002}). For $M \in K$ and $\alpha$ an ordinal, we write $\Ss^\alpha (M)$ for the set of Galois types of sequences of length $\alpha$ over $M$. We write $\Ss^{<\alpha} (M)$ for $\bigcup_{\beta < \alpha} \Ss^{\beta} (M)$ and $\Ss^{<\infty} (M)$ for $\bigcup_{\beta \in \OR} \Ss^\beta (M)$. We write $\Ss (M)$ for $\Ss^1 (M)$ and $\Sna (M)$ for the set of \emph{nonalgebraic} 1-types over $M$, that is:

$$
\Sna (M) := \{\gtp (a / M; N) \mid a \in N \backslash M, M \lea N \in K\}
$$

From now on, we will write $\tp (a / M; N)$ for $\gtp (a / M; N)$. If $p \in \Ss^{\alpha} (M)$, we define $\ell (p) := \alpha$ and $\dom{p} := M$. Note that $\alpha$ is an invariant of the Galois type and is referred to as its \emph{length}.

Say $p = \tp (\ba / M; N) \in \Ss^\alpha (M)$, where $\ba = \seq{a_i :i < \alpha}$. For $X \subseteq \alpha$ and $M_0 \lea M$, write $p^X \upharpoonright M_0$ for $\tp (\ba_X / M_0; N)$, where $\ba_X := \seq{a_i : i \in X}$. We say $p \in \Sana{\alpha} (M)$ if $a_i \notin M$ for all $i < \alpha$, and similarly define $\Sana{<\alpha} (M)$ (it is easy to check these definitions do not depend on the choice of $\ba$ and $N$).

We briefly review the notion of tameness. Although it appears implicitly (for saturated models) in Shelah \cite{sh394}, tameness as a property of AECs was first introduced in Grossberg and VanDieren \cite{tamenessone} and used to prove a stability spectrum theorem there.

\begin{defin}[Tameness]\label{tameness-def}
  Let $\theta > \lambda \ge \LS (K)$ and let $\mathcal{G} \subseteq \bigcup_{M \in K} \Ss^{<\infty} (M)$ be a family of types. We say that $K$ is \emph{$(\lambda, \theta)$-tame for $\mathcal{G}$} if for any $M \in K_{\le \theta}$ and any $p, q \in \mathcal{G} \cap \Ss^{<\infty} (M)$, if $p \neq q$, then there exists $M_0 \lea M$ of size $\le \lambda$ such that $p \upharpoonright M_0 \neq q \upharpoonright M_0$. We define similarly $(\lambda, <\theta)$-tame, $(<\lambda, \theta)$-tame, etc. When $\theta = \infty$, we omit it. $(\lambda, \theta)$-tame for $\alpha$-types means $(\lambda, \theta)$-tame for $\bigcup_{M \in K} \Ss^\alpha (M)$, and similarly for $<\alpha$-types. When $\alpha = 1$, we omit it and simply say $(\lambda, \theta)$-tame.
\end{defin}

We also recall that we can define a notion of stability:

\begin{defin}[Stability]
  Let $\lambda \ge \LS (K)$ and $\alpha$ be cardinals. We say that $K$ is \emph{$\alpha$-stable in $\lambda$} if for any $M \in K_\lambda$, $|\Ss^\alpha (M)| \le \lambda$.

  We say that $K$ is \emph{stable} in $\lambda$ if it is $1$-stable in $\lambda$.

  We say that $K$ is \emph{$\alpha$-stable} if it is $\alpha$-stable in $\lambda$ for some $\lambda \ge \LS (K)$. We say that $K$ is \emph{stable} if it is $1$-stable in $\lambda$ for some $\lambda \ge \LS (K)$. We write ``unstable'' instead of ``not stable''. 

  We define similarly stability for $K_{\F}$, e.g.\ $K_{\F}$ is stable if and only if $K$ is stable in $\lambda$ for some $\lambda \in \F$.
\end{defin}

\begin{remark}\label{monot-stab}
  If $\alpha < \beta$, and $K$ is $\beta$-stable in $\lambda$, then $K$ is $\alpha$-stable in $\lambda$.
\end{remark}

The following follows from \cite[Theorem 3.1]{longtypes-toappear-v2}.

\begin{fact}\label{stab-longtypes}
  Let $\lambda \ge \LS (K)$. Let $\alpha$ be a cardinal. Assume $K$ is stable in $\lambda$ and $\lambda^\alpha = \lambda$. Then $K$ is $\alpha$-stable in $\lambda$.
\end{fact}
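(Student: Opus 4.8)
The plan is to realize every $\alpha$-type over a fixed $M \in K_\lambda$ inside a single extension of size $\lambda$ and then simply count sequences. Fix $M \in K_\lambda$. The key input is that stability in $\lambda$ yields a \emph{universal} extension $N^* \gea M$ with $\|N^*\| = \lambda$: every $N \in K_\lambda$ with $M \lea N$ admits a $K$-embedding into $N^*$ fixing $M$. This is the classical construction of (limit, hence) universal models, carried out in the standard framework of amalgamation and no maximal models in $K_\lambda$. Stability enters precisely to keep the successor steps — realizing the at most $\lambda$ many $1$-types over each model in a chain — inside size $\lambda$, and this is exactly the content supplied by \cite[Theorem 3.1]{longtypes-toappear-v2}.

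Granting $N^*$, I would argue that every $p \in \Ss^\alpha(M)$ is realized in $N^*$. Write $p = \tp(\bar b / M; N)$ with $M \lea N$ and $\bar b \in \fct{\alpha}{N}$. Since $\|M\| = \lambda$, $\alpha \le \lambda$ (indeed $\lambda^\alpha = \lambda$ forces $\alpha < \cf{\lambda}$ by K\"onig, so in particular $\alpha < \lambda$), and $\LS(K) \le \lambda$, L\"owenheim--Skolem gives $N' \lea N$ with $M \lea N'$, $\bar b \in \fct{\alpha}{N'}$ and $\|N'\| = \lambda$; then $\tp(\bar b / M; N') = p$ by monotonicity. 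Universality provides $g \colon N' \xrightarrow[M]{} N^*$, and since embeddings over $M$ preserve Galois types (here amalgamation is used), $g(\bar b)$ realizes $p$ in $N^*$.

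The count is then immediate: the assignment $\bar a \mapsto \tp(\bar a / M; N^*)$ is a surjection from $\fct{\alpha}{N^*}$ onto $\Ss^\alpha(M)$, so $|\Ss^\alpha(M)| \le |\fct{\alpha}{N^*}| = \|N^*\|^\alpha = \lambda^\alpha = \lambda$. As $M \in K_\lambda$ was arbitrary, $K$ is $\alpha$-stable in $\lambda$. If $M$ happens to be $\lea$-maximal, there is no proper extension of size $\lambda$ and the universal-model step is vacuous; but then every realization of an $\alpha$-type over $M$ already lies in $M$, so $|\Ss^\alpha(M)| \le |\fct{\alpha}{M}| = \lambda^\alpha = \lambda$ directly, and the same conclusion holds.

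I expect the only real obstacle to be the first step — producing the universal extension of size exactly $\lambda$ from stability. Everything after it is bookkeeping, and the cardinal hypothesis $\lambda^\alpha = \lambda$ is used only in the final inequality, to absorb the $\alpha$-fold product of a set of size $\lambda$. The delicacy in the universal-model construction is that type extension is not automatic in a general AEC, so the successor steps must be organized with care (realizing all $1$-types over each model of a suitable chain, rather than blindly extending a given type), which is exactly why I would lean on the cited theorem rather than reprove this here.
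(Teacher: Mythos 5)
Your argument is correct in substance, but it takes a genuinely different route from the paper: the paper offers no proof at all, simply citing Boney's computation of the number of types of infinite length, which shows (under amalgamation, with no stability hypothesis) that $\sup_{M \in K_\lambda}|\Ss^\alpha(M)| = \left(\sup_{M\in K_\lambda}|\Ss^1(M)|\right)^{|\alpha|}$; the Fact is the special case where the right-hand side is $\lambda^{\alpha} = \lambda$. Your alternative — build a universal extension $N^* \gea M$ with $\|N^*\| = \lambda$ from stability, realize every $\alpha$-type in $N^*$ via Löwenheim--Skolem descent plus universality, and count $|\fct{\alpha}{N^*}| = \lambda^\alpha = \lambda$ — is a legitimate and more self-contained derivation, and the supporting details check out: $\lambda^\alpha = \lambda$ does force $\alpha < \cf{\lambda}$, embeddings over $M$ do preserve Galois types, and (under amalgamation) a model maximal in $K_\lambda$ is in fact maximal in $K$, so your closing case split is sound. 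Two caveats. First, the Fact as stated in the preliminaries carries no amalgamation hypothesis, yet both the construction of $N^*$ and the type-preservation step need it; this matches the standing hypotheses of the cited source and of every application in the paper (e.g.\ Lemma \ref{sym-elongation}), but you should make it explicit. Second, your parenthetical claim that the successor step of the universal-model construction ``is exactly the content supplied by'' the cited Theorem 3.1 is a misattribution: realizing the $\le\lambda$ many $1$-types over a model of size $\lambda$ inside a single extension of size $\lambda$ is just iterated amalgamation plus Löwenheim--Skolem, whereas the cited theorem is essentially the statement of the Fact itself for long types. What each approach buys: the citation gives the sharper, stability-free identity for long types, while your proof uses stability more directly and exposes exactly which structural inputs (amalgamation, universal models) are being consumed.
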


\subsection{Commutative Diagrams} Since a picture is worth a thousand words, we make extensive use of commutative diagrams to illustrate the proofs.  Most of the notation is standard.  When we write
\[
\xymatrix{
M_0 \ar[r]_{[a]}^f & M_1 \ar@{.>}[r]_{[\bb]}^g & M_2
}
\]
The functions $f$ and $g$, typically written above arrows, are always $K$-embeddings; that is, $f: M_0 \cong f[M_0] \lea M_1$.  Writing no functions means that the $K$-embedding is the identity.  The elements in square brackets $a$ and $\bb$, typically written below arrows, are elements that exist in the target model, but not the source model; that is, $a \in M_1 - f[M_0]$.  Writing no element simply means that there are no elements \emph{that we wish to draw the reader's attention to} in the difference.  In particular, it does \emph{not} mean that the two models are isomorphic.  We sometimes make a distinction between embeddings appearing in the hypothesis of a statement (denoted by solid lines),  and those appearing in the conclusion (denoted by dotted lines).

\section{Good frames}\label{good-frame}

Good frames were first defined in \cite[Chapter II]{shelahaecbook}. The idea is to provide a localized (i.e.\ only for base models of a given size $\lambda$) axiomatization of a forking-like notion for a ``nice enough'' set of 1-types.  These axioms are similar to the properties of first-order forking in a superstable theory. Jarden and Shelah (in \cite{jrsh875}) later gave a slightly more general definition, not assuming the existence of a superlimit model and dropping some of the redundant clauses. We give a slightly more general variation here: following \cite{ss-tame-jsl}, we assume the models come from $K_{\F}$, for $\F$ an interval, instead of just $K_\lambda$. We also assume that the types could be longer than just types of singletons. We first adapt the definition of a pre-$\lambda$-frame from \cite[Definition III.0.2.1]{shelahaecbook}:

\begin{defin}[Pre-frame]
  Let $\alpha$ be an ordinal and let $\F$ be an interval of the form $[\lambda, \theta)$, where $\lambda$ is a cardinal, and $\theta > \lambda$ is either a cardinal or $\infty$.

  A \emph{pre-$(<\alpha, \F)$-frame} is a triple $\s = (K, \nf, \Sbs)$, where:

  \begin{enumerate}
  \item $K$ is an abstract elementary class with $\lambda \ge \LS (K)$, $K_\lambda \neq \emptyset$.
  \item $\Sbs \subseteq \bigcup_{M \in K_{\F}} \Sana{<\alpha} (M)$. For $M \in K_{\F}$ and $\beta$ an ordinal, we write $\Sabs{\beta} (M)$ for $\Sbs \cap \Sana{\beta} (M)$ and similarly for $\Sabs{<\beta} (M)$.
  \item $\nf$ is a relation on quadruples of the form $(M_0, M_1, \ba, N)$, where $M_0 \lea M_1 \lea N$, $\ba \in \fct{<\alpha}{N}$, and $M_0$, $M_1$, $N$ are all in $K_{\F}$. We write $\nf(M_0, M_1, \ba, N)$ or $\nfs{M_0}{\ba}{M_1}{N}$ instead of $(M_0, M_1, a, N) \in \nf$. 
  \item The following properties hold:
    \begin{enumerate}
      \item \underline{Invariance}: If $f: N \cong N'$ and $\nfs{M_0}{\ba}{M_1}{N}$, then $\nfs{f[M_0]}{f(\ba)}{f[M_1]}{N'}$. If $\tp (\ba / M_1; N) \in \Sbs (M_1)$, then $\tp(f (\ba) / f[M_1]; N') \in \Sbs (f[M_1])$.
      \item \underline{Monotonicity}: If $\nfs{M_0}{\ba}{M_1}{N}$, $\ba'$ is a subsequence of $\ba$, $M_0 \lea M_0' \lea M_1' \lea M_1 \lea N' \lea N \lea N''$ with $\ba' \in N'$, and $N'' \in K_{\F}$, then $\nfs{M_0'}{\ba'}{M_1'}{N'}$ and $\nfs{M_0'}{\ba'}{M_1'}{N''}$. If $\tp (\ba / M_1; N) \in \Sbs (M_1)$ and $\ba'$ is a subsequence of $\ba$, then $\tp (\ba' / M_1; N) \in \Sbs (M_1)$.
      \item \underline{Nonforking types are basic}: If $\nfs{M}{\ba}{M}{N}$, then $\tp (\ba / M; N) \in \Sbs (M)$.
    \end{enumerate}
  \end{enumerate}

  A \emph{pre-$(\le \alpha, \F)$-frame} is a pre-$(<(\alpha+1), \F)$-frame. When $\alpha = 1$, we drop it. We write pre-$(<\alpha, \lambda)$-frame instead of pre-$(<\alpha, \{\lambda\})$-frame or pre-$(<\alpha, [\lambda, \lambda^+))$-frame; and pre-$(<\alpha, (\ge \lambda))$-frame instead of pre-$(<\alpha, [\lambda, \infty))$-frame. We sometimes drop the $(<\alpha, \F)$ when it is clear from context.

    For $\s$ a pre-$(<\alpha, \F)$-frame, $\beta \le \alpha$, and $\F' \subseteq \F$ an interval, we let $\s_{\F'}^{<\beta}$ denote the pre-$(<\beta, \F')$-frame defined in the obvious way by restricting the basic types and $\nf$ to models in $K_{\F'}$ and elements of length $<\beta$. If $\F' = \F$ or $\beta = \alpha$, we omit it. For $\lambda' \in \F$, we write $\s_{\lambda'}^{<\beta}$ instead of $\mathfrak{s}_{\{\lambda'\}}^{<\beta}$.
\end{defin}
\begin{remark}
  Note that, following Shelah's original definition, we have defined nonforking (in the sense of frames) only for nonalgebraic types.  However, this restriction is inessential: We could expand the definition of nonforking to algebraic types by saying that an algebraic $p \in S(M)$ does not fork over $M_0$ if and only if $p \rest M_0$ is algebraic.  This change would not affect whether or not a frame satisfies the properties given\footnote{In the statement of the extension property, we would need to require that the nonforking extension of a nonalgebraic type is nonalgebraic.}.
\end{remark}
\begin{remark}
  The reader might wonder about the reasons for having a special class of basic types. Following Shelah \cite[Definition III.9.2]{shelahaecbook}, let us call a pre-frame \emph{type-full} if the basic types are all the nonalgebraic types. It can be shown \cite[III.9.6]{shelahaecbook} that any weakly successful good frame can be extended to a type-full one. Furthermore, there are no known examples of a good $\lambda$-frame which which cannot be extended to a type-full one. However Shelah's initial construction \cite[II.3]{shelahaecbook} builds a non type-full good frame and it is not clear that it can be extended to a type-full one until after Shelah shows that the frame is weakly successful. Thus it can be easier to build a good frame than to build a type-full one, and most results about frame already hold in the non-type-full context. In this paper, we will set the basic types to be the independent sequences, hence getting another natural example of a non type-full good frame.
\end{remark}

\begin{notation}
If $\s = (K, \nf, \Sbs)$ is a pre-$(<\alpha, \F)$-frame, then $\alpha_\s := \alpha$, $\F_s := \F$, $K_\s := K$, $\nf_\s := \nf$, and $\Sbs_\s := \Sbs$. If $\F = [\lambda, \theta)$, then let $\lambda_\s := \lambda$, $\theta_\s := \theta$.
\end{notation}

By the invariance and monotonicity properties, $\nf$ is really a relation on types. This justifies the next definition.

\begin{defin}
  If $\s = (K, \nf, \Sbs)$ is a pre-$(<\alpha, \F)$-frame, $p \in \Ss^{<\alpha} (M_1)$ is a type, we say $p$ \emph{does not fork over $M_0$} if $\nfs{M_0}{\ba}{M_1}{N}$ for some (equivalently any) $\ba$ and $N$ such that $p = \tp (\ba / M_1; N)$.  If $\s$ is not clear from context, we add ``with respect to $\s$''.
\end{defin}

\begin{remark}
  We could have started from $(K, \nf)$ and defined the basic types as those that do not fork over their own domain. Since we are sometimes interested in studying frames that only satisfy existence over a certain class of models (like the saturated models), we will not adopt this approach.
\end{remark}
\begin{remark}
  We could also have specified only $K_\F$ or even only $K_\lambda$ instead of the full AEC $K$. This is completely equivalent since, by \cite[Section II.2]{shelahaecbook}, $K_\lambda$ fully determines $K$.
\end{remark}

\begin{defin}[Good frame]\label{good-frame-def}
  Let $\alpha$, $\F$ be as above.

  A \emph{good $(<\alpha, \F)$-frame} is a pre-$(<\alpha, \F)$-frame $(K, \nf, \Sbs)$ satisfying in addition:

  \begin{enumerate}
  \item $K_{\F}$ has amalgamation, joint embedding, and no maximal models.
    \item \underline{bs-Stability}: $|\Sabs{1} (M)| \le \|M\|$ for all $M \in K_{\F}$.
    \item \underline{Density of basic types}: If $M \lta N$ are in $K_{\F}$, then there is $a \in N$ such that $\tp (a / M; N) \in \Sbs (M)$.
  \item \underline{Existence of nonforking extension}: If $p \in \Sbs (M)$, $N \gea M$ is in $K_{\F}$, and $\beta < \alpha$ is such that $\ell(p) \leq \beta$, then there is some $q \in \Sabs{\beta}(N)$ that does not fork over $M$ and extends $p$, i.e. $q^\beta \rest M = p$.
  \item \underline{Uniqueness}: If $p, q \in \Ss^{<\alpha} (N)$ do not fork over $M$ and $p \upharpoonright M = q \upharpoonright M$, then $p = q$.
  \item \underline{Symmetry}: If $\nfs{M_0}{\ba_1}{M_2}{N}$, $\ba_2 \in \fct{<\alpha}{M_2}$, and $\tp (\ba_2 / M_0; N) \in \Sbs (M_0)$, then there is $M_1$ containing $\ba_1$ and $N' \gea N$ such that $\nfs{M_0}{\ba_2}{M_1}{N'}$.
  \item \underline{Local character}: If $\delta$ is a regular cardinal, $\seq{M_i \in K_{\F} : i \le \delta}$ is increasing continuous, and $p \in \Sbs (M_\delta)$ is such that $\ell (p) < \delta$, then there exists $i < \delta$ such that $p$ does not fork over $M_i$.
  \item \underline{Continuity}: If $\delta$ is a limit ordinal, $\seq{M_i \in K_{\F}: i \leq \delta}$ and $\seq{\alpha_i < \alpha : i \leq \delta}$ are increasing and continuous, and $p_i \in \Sabs{\alpha_i} (M_i)$ for $i < \delta$ are such that $j < i < \delta$ implies $p_j = p_i^{\alpha_j} \rest M_j$, then there is some $p \in \Sabs{\alpha_\delta} (M_\delta)$ such that for all $i < \delta$, $p_i = p^{\alpha_i} \rest M_i$.  Moreover, if each $p_i$ does not fork over $M_0$, then neither does $p$.
  \item \underline{Transitivity}: If $M_0 \lea M_1 \lea M_2$, $p \in S (M_2)$ does not fork over $M_1$ and $p \upharpoonright M_1$ does not fork over $M_0$, then $p$ does not fork over $M_0$.
  \end{enumerate}

  We will sometimes refer to ``existence of nonforking extension'' as simply ``existence''.

  For $\mathbb{L}$ a list of properties\footnote{This notation was already used in \cite[Definition 2.21]{ss-tame-jsl}.}, a $\goodms{\mathbb{L}}$ $(<\alpha, \F)$-frame is a pre-$(<\alpha, \F)$-frame that satisfies all the properties of good frames except possibly the ones in $\mathbb{L}$. In this paper, $\mathbb{L}$ will only contain symmetry and/or bs-stability. We abbreviate symmetry by $S$, bs-stability by $St$, and write $\goodm$ for $\goodms{(S, St)}$.

  We say that $K$ has a good $(<\alpha, \F)$-frame if there is a good $(<\alpha, \F)$-frame where $K$ is the underlying AEC (and similarly for $\goodm$).
\end{defin}
\begin{remark}\label{trans-prop}
  Transitivity follows directly from existence and uniqueness by \cite[Claim II.2.18]{shelahaecbook}.
\end{remark}

\begin{remark}
  The obvious monotonicity properties hold: If $\s$ is a good $(<\alpha, \F)$-frame, $\beta \le \alpha$,  and $\F'$ is a subinterval of $\F$, then $\s_{\F'}^{<\beta}$ is a good $(<\beta, \F')$ frame (and similarly for $\goodm$).
\end{remark}
\begin{remark}\label{good-frame-existence}
  If $T$ is a superstable first-order theory, then forking induces a good $(\ge |T|)$-frame on the class of models of $T$ ordered by elementary submodel. In the non-elementary context, Shelah showed in \cite[Theorem II.3.7]{shelahaecbook} how to build a good frame from local categoricity hypotheses and GCH-like assumptions, while the second author \cite{ss-tame-jsl} showed how to build a good frame in ZFC from categoricity, tameness, and a monster model. Note that a family of examples due to Hart and Shelah \cite{hs-example} demonstrates that, in the absence of tameness, an AEC could have a good $\lambda$-frame but no good $(\ge \lambda)$-frame (see \cite[Section 10]{ext-frame-jml} for a detailed writeup).
\end{remark}

Note that for types of finite length, local character implies that nonforking is witnessed by a model of small size:

\begin{prop}\label{kappabar}
  Let $\alpha \le \omega$. Assume $\s = (K, \nf, \Sbs)$ is a pre-$(<\alpha, \F)$-frame satisfying local character and transitivity. If $M \in K_{\F}$ and $p \in \Sbs (M)$, then there exists $M' \in K_\lambda$ such that $p$ does not fork over $M'$.
\end{prop}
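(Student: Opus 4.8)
The plan is to induct on the cardinal $\mu := \|M\| \in \F$. The base case is $\mu = \lambda$, where I would simply take $M' = M$; the only thing that needs checking is that a basic type does not fork over its own domain. This follows from local character applied to the constant chain $\seq{M_i : i \le \omega}$ with $M_i = M$: since $\alpha \le \omega$ forces $\ell(p)$ to be finite, we have $\ell(p) < \omega = \cf{\omega}$, so there is $i < \omega$ with $p$ not forking over $M_i = M$.

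For the inductive step, suppose $\mu > \lambda$ and that the statement holds for every model of size in $[\lambda, \mu)$. Using the L\"owenheim--Skolem axiom I would resolve $M$ as an increasing continuous chain $\seq{M_i : i \le \cf{\mu}}$ with $M_{\cf{\mu}} = M$ and $\lambda \le \|M_i\| < \mu$ for all $i < \cf{\mu}$; in particular each $M_i \in K_\F$. Since $\cf{\mu}$ is an infinite cardinal (hence a limit ordinal) and $\ell(p) < \omega \le \cf{\cf{\mu}}$, local character yields some $i < \cf{\mu}$ over which $p$ does not fork.

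Now fix such an $i$. Writing $p = \tp(\ba / M; N)$, monotonicity gives $\nfs{M_i}{\ba}{M_i}{N}$, so by ``nonforking types are basic'' the restriction $q := p^{\ell(p)} \rest M_i = \tp(\ba / M_i; N)$ lies in $\Sbs(M_i)$. As $\|M_i\| < \mu$, the induction hypothesis provides $M' \in K_\lambda$ with $M' \lea M_i$ such that $q$ does not fork over $M'$. Finally, since $p$ does not fork over $M_i$ and $q = p \rest M_i$ does not fork over $M'$, transitivity gives that $p$ does not fork over $M'$, completing the induction.

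The main obstacle I anticipate is essentially bookkeeping: arranging the resolution so that every link $M_i$ stays inside $K_\F$ (i.e.\ has size at least $\lambda$) while still having size strictly below $\mu$, and verifying that the cofinality hypothesis of local character is met --- which is precisely where the assumption $\alpha \le \omega$, making all basic types of finite length, is used. Once these are in place, the combination of monotonicity, ``nonforking types are basic'', and transitivity is routine.
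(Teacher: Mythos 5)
Your proof is correct and takes essentially the same route as the paper, which simply defers to the proof of \cite[Proposition 2.23]{ss-tame-toappear-v3}: induction on $\|M\|$, applying local character to a resolution of $M$ into smaller models (where the finiteness of $\ell(p)$, i.e.\ $\alpha \le \omega$, guarantees $\ell(p) < \cf{\cf{\mu}}$) and then concluding by transitivity. Your explicit treatment of the base case --- extracting from local character applied to a constant chain that a basic type does not fork over its own domain, which is not itself a pre-frame axiom here --- is a detail worth spelling out, and it is the right way to get it under the stated hypotheses.
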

\begin{proof}
  Same proof as \cite[Proposition 2.23]{ss-tame-jsl} (there $\alpha = 1$ but this does not change the proof).
\end{proof}




We conclude this section with an easy variation on the existence property that will be used later.

\begin{lem}\label{ext-vars}
  Assume $\s = (K, \nf, \Sbs)$ is a pre-$(<\alpha, \F)$-frame with amalgamation, existence, and transitivity. Suppose $M \lea M_0 \lea M_1$ are in $K_{\F}$ and $f: M_0 \rightarrow M_2$ is given with $M_2 \in K_{\F}$. Assume also that we have $\ba \in M_1$ such that $\nfs{M}{\ba}{M_0}{M_1}$.


  There is $N \gea M_2$ and $g: M_1 \to N$ extending $f$ such that $\nfs{g[M]}{g (\ba)}{M_2}{N}$. A diagram is below. 

    \[
    \xymatrix{ & M_1 \ar@{.>}[r]_g & N\\
      & M_0 \ar[u]^{[\ba]} \ar[r]_f & M_2 \ar@{.>}[u]\\
      M \ar[ur] & & 
    }
    \]

\end{lem}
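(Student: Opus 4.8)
The plan is to transport the hypothesis $\nfs{M}{\ba}{M_0}{M_1}$ across $f$, then use the existence axiom to push the base model up to $M_2$, and finally glue the resulting pieces together with transitivity. I would begin by replacing $f$ with an isomorphism: rename a copy of $M_1$ to obtain $\hat{f} : M_1 \cong M_1'$ with $\hat{f} \rest M_0 = f$ and $f[M_0] \lea M_1'$, noting $M_1' \in K_\F$ since $K_\F$ is closed under isomorphism. Applying invariance to $\hat{f}$ converts the hypothesis into $\nfs{f[M]}{\hat{f}(\ba)}{f[M_0]}{M_1'}$. Enlarging the base with monotonicity up to $f[M_0]$ and invoking the ``nonforking types are basic'' clause then shows that $r := \tp(\hat{f}(\ba) / f[M_0]; M_1')$ lies in $\Sbs(f[M_0])$.

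Next I would feed $r$ into the existence axiom. Since $f[M_0] \lea M_2$ are both in $K_\F$ and $\ell(r) < \alpha$, existence yields a nonforking extension of $r$ to $M_2$; realizing it gives $N \gea M_2$ and $\bb$ with $\nfs{f[M_0]}{\bb}{M_2}{N}$ and $\tp(\bb / f[M_0]; N) = r = \tp(\hat{f}(\ba)/f[M_0]; M_1')$. Because these two Galois types over $f[M_0]$ coincide, amalgamation produces $N' \gea N$ and an embedding $i : M_1' \to N'$ fixing $f[M_0]$ with $i(\hat{f}(\ba)) = \bb$. Setting $g := i \circ \hat{f}$, the fact that $i$ fixes $f[M_0]$ gives $g \rest M_0 = f$ (so $g$ extends $f$ and $g[M] = f[M]$), while $g(\ba) = \bb$.

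It remains to verify the nonforking conclusion, and here is where transitivity enters. Monotonicity, enlarging the ambient model from $N$ to $N'$, gives $\nfs{f[M_0]}{\bb}{M_2}{N'}$; and applying invariance to the embedding $g$, starting from the original hypothesis, gives $\nfs{f[M]}{\bb}{f[M_0]}{N'}$ after a further use of monotonicity. That is, $\tp(\bb / M_2; N') \rest f[M_0]$ does not fork over $f[M]$. Since $f[M] \lea f[M_0] \lea M_2$, transitivity delivers $\nfs{f[M]}{\bb}{M_2}{N'}$, which is precisely $\nfs{g[M]}{g(\ba)}{M_2}{N'}$. Taking this $N'$ as the required $N$ finishes the argument.

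The main obstacle to keep in mind is that the monotonicity axiom allows one to shrink the middle model but never to enlarge the base; one therefore cannot simply upgrade $\nfs{f[M]}{\bb}{f[M_0]}{N'}$ to a statement with base $M_2$. The role of the existence axiom above is precisely to manufacture a \emph{separate} extension that already has $M_2$ as its base and is independent over $f[M_0]$, after which transitivity splices it together with the transported original independence over $f[M]$. Recognizing that the realization $\bb$ produced by existence can be taken to be the image of $\ba$ under a map $g$ extending $f$ — which is exactly what equality of Galois types together with amalgamation guarantees — is the step that makes the whole argument cohere.
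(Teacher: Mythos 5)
Your argument is correct and follows essentially the same route as the paper's: both use existence to produce a nonforking extension of the (transported) type of $\ba$ over the image of $M_0$ to a base containing $M_2$, realize it, use equality of Galois types plus amalgamation to obtain $g$ extending $f$, and finish with transitivity. The only difference is cosmetic — you push $M_1$ forward along $f$ while the paper pulls $M_2$ back along $f^{-1}$ — and you usefully spell out the ``nonforking calculus'' the paper leaves implicit.
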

\begin{proof}


Extend $f$ to an $L(K)$-isomorphism $\bigf$ with range $M_2$.  By existence, there is some $q \in \Sbs(\bigf^{-1}[M_2])$ that extends $\tp(\ba/M_0; M_1)$ and does not fork over $M_0$.  Realize $q$ as $\tp(\bb/\bigf^{-1}[M_2]; N^+)$.  Since $\tp(\ba/M_0; M_1) = \tp(\bb/M_0; N^+)$, there is $N^{++} \gea N^+$ and $h:M_1 \xrightarrow[M_0]{} N^{++}$ such that $h(\ba) = \bb$.  Then, since $N^{++}$ extends $\bigf^{-1}[M_2]$, we can find an $L(K)$-isomorphism $\bigf^+$ that extends $\bigf$ such that $N^{++}$ is the domain of $\bigf^+$.  Set $N:= \bigf^+[N^{++}]$ and $g := \bigf^+ \circ h$.  Some nonforking calculus shows that this works. 
\end{proof}

\section{Independent sequences form a $\goodm$ frame}\label{indep-seq-good}

In this section, we show how to make a $\goodms{S}$ $\F$-frame longer (i.e.\ extend the nonforking relation to longer sequences).  This is done by using independent sequences, introduced by Shelah \cite[Definition III.5.2]{shelahaecbook} and also studied by Jarden and Sitton \cite{jasi}, to define basic types and nonforking. Preservation of the symmetry property will be studied in Section \ref{sym-long-frames}, and in Section \ref{going-up} we will review how to make the frame larger (i.e.\ extend the nonforking relation to larger \emph{models}). 

Note that Shelah already claims many of the results of this section (for finite tuples) in \cite[Exercise III.9.4.1]{shelahaecbook} but the proofs have never appeared anywhere.

\begin{defin}[Independent sequence] \label{indep-seq-def}
Let $\alpha$ be an ordinal and let $\s$ be a pre-$\F$-frame. 

  \begin{enumerate}
  \item $\seq{a_i:i < \alpha}$, $\seq{M_i : i \leq \alpha}$ is said to be \emph{independent in $(M, M', N)$} when:

    \begin{enumerate}
    \item $(M_i)_{i \leq \alpha}$ is increasing continuous in $K_{\F}$.
    \item $M \lea M' \lea M_0$ and $M, M' \in K_{\F}$.
    \item $M_\alpha \lea N$ is in $K_{\F}$.
    \item For every $i < \alpha$ , $\nfs{M}{a_i}{M_i}{M_{i+1}}$.
    \end{enumerate}

    $\seq{a_i : i < \alpha}$, $\seq{M_i: i \leq \alpha}$ is said to be \emph{independent over $M$} when it is independent in $(M, M_0, M_\alpha)$.
  \item $\ba := \seq{a_i : i < \alpha}$ is said to be \emph{independent in $(M, M', N)$} when for some $\seq{M_i : i \le \alpha}$ we have that $\seq{a_i : i < \alpha}$, $\seq{M_i : i \leq \alpha}$ is independent in $(M, M', N)$.
  \item We say that $\seq{a_i : i < \alpha}$, $\seq{M_i : i \leq \alpha}$ is \emph{independent from $M'$ over $M$ in $N$} if it is independent in $(M, M', N)$. We similarly define \emph{$\ba$ is independent from $M'$ over $M$ in $N$}. When $N$ is clear from context, we drop it.
  \end{enumerate}
\end{defin}
\begin{remark}
  If $\alpha = 1$, then $\ba := \seq{a_0}$ is independent from $M'$ over $M$ in $N$ if and only if $\tp (a_0 / M'; N)$ does not fork over $M$. 
\end{remark}

This motivates the next definition:

\begin{defin}
Let $\s := (K, \nf, \Sbs)$ be a pre-$\F$-frame, where $\F = [\lambda, \theta)$. Let $\alpha \le \theta$. Define $\s^{<\alpha} := (K, \nf^{<\alpha}, \Sabs{<\alpha})$ as follows:

  \begin{itemize}
  \item For $M_0 \lea M_1 \lea N$ in $K_{\F}$ and $\ba := (a_i)_{i < \beta}$ in $N$ with $\beta < \alpha$, $\nf^{<\alpha} (M_0, M_1, \ba,  N)$ if and only if $\ba$ is independent from $M_1$ over $M_0$ in $N$.
  \item For $M \in K_{\F}$ and $p \in \Ss^{<\alpha} (M)$, $p \in \Sabs{<\alpha} (M)$ if and only if there exists $N \gea M$ and $\ba \in N$ such that $p = \tp (\ba / M; N)$ and $\nf^{<\alpha} (M, M, \ba, N)$.
  \end{itemize}
\end{defin}

\begin{lem}[Invariance]\label{type-indep}
Let $\s := (K, \nf, \Sbs)$ be a pre-$\F$-frame, where $\F = [\lambda, \theta)$. Let $\alpha \le \theta$. Assume $K_{\F}$ has amalgamation. Given $\ba = \seq{a_i : i < \alpha}$ independent from $M_0$ over $M$ in $M_1$ and $M_2 \gea M_0$ containing $\bb$ such that $\tp(\ba/M_0; M_1) = \tp(\bb/M_0; M_2)$, we have that $\bb$ is independent from $M_0$ over $M$ in $M_2$.
\end{lem}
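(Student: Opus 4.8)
The plan is to transport the witnessing data for the independence of $\ba$ across the isomorphism supplied by the equality of Galois types, using only the Invariance axiom of the pre-frame $\s$ together with amalgamation in $K_\F$. First I would unwind the hypothesis. By Definition \ref{indep-seq-def}, that $\ba$ is independent from $M_0$ over $M$ in $M_1$ means there are an increasing continuous $\seq{N_i : i \le \alpha}$ in $K_\F$ with $N_0 = M_0$, and a model $N^+ \in K_\F$ with $N_\alpha \lea N^+$ and $M_1 \lea N^+$, such that $\nfs{M}{a_i}{N_i}{N_{i+1}}$ for all $i < \alpha$ (I rename the witnessing chain to $\seq{N_i}$ to avoid a clash with $M_1, M_2$). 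The point of introducing $N^+$ is that the ambient model $M_1$, the sequence $\ba$, and all of its witnesses $\seq{N_i}$ now sit inside the single model $N^+$. Since the Galois type of $\ba$ computed in $N^+$ agrees with the one computed in $M_1$, the hypothesis gives $\tp(\ba / M_0 ; N^+) = \tp(\bb / M_0 ; M_2)$.

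Next I would amalgamate along this equality. By the definition of equality of Galois types and amalgamation in $K_\F$, there are $N^{**} \in K_\F$ and $K$-embeddings $g_1 : N^+ \xrightarrow[M_0]{} N^{**}$ and $g_2 : M_2 \xrightarrow[M_0]{} N^{**}$, both fixing $M_0$, with $g_1(\ba) = g_2(\bb)$. Applying $g_1$ to the witnessing data and invoking Invariance (noting that $g_1$ fixes $M_0 \gea M$ pointwise, so $g_1[M] = M$ and $g_1[N_0] = M_0$, and that an isomorphism sends an increasing continuous chain in $K_\F$ to one of the same kind), I obtain that $\seq{g_1[N_i] : i \le \alpha}$ witnesses that $g_1(\ba)$ is independent from $M_0$ over $M$ in $N^{**}$. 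Since $g_1(\ba) = g_2(\bb)$, the very same chain witnesses the independence of $g_2(\bb)$.

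Finally I would pull everything back to $M_2$, exactly as in the proof of Lemma \ref{ext-vars}: extend the isomorphism $g_2^{-1} : g_2[M_2] \to M_2$ to an $L(K)$-isomorphism $G$ with domain $N^{**}$, and set $N' := G[N^{**}]$, so that $M_2 \lea N'$ and $N' \in K_\F$. Since $g_2$ fixes $M_0$, so does $G$, whence $G[M] = M$ and $G[M_0] = M_0$; and $G(g_2(b_i)) = b_i$. Applying Invariance to $\nfs{M}{g_1(a_i)}{g_1[N_i]}{g_1[N_{i+1}]}$ (recalling $g_1(a_i) = g_2(b_i)$) yields $\nfs{M}{b_i}{G[g_1[N_i]]}{G[g_1[N_{i+1}]]}$, with $G[g_1[N_0]] = M_0$. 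Thus $\seq{G[g_1[N_i]] : i \le \alpha}$ together with $N'$ witnesses, via Definition \ref{indep-seq-def}, that $\bb$ is independent from $M_0$ over $M$ in $M_2$.

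I expect the main difficulty to be bookkeeping rather than mathematical: keeping the three layers of models straight (the original witnesses $N_i$, their $g_1$-images, and their $G$-images), and verifying at each stage that the relevant models remain in $K_\F$ and that the maps fix $M_0$ and hence $M$. The only genuinely substantive inputs are the Invariance axiom of $\s$, amalgamation in $K_\F$, and the standard fact — already used in Lemma \ref{ext-vars} — that an isomorphism defined on a $\lea$-substructure extends to an isomorphism of the whole model onto a suitable extension of its image.
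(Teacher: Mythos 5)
Your proof is correct: the paper itself dismisses this lemma as ``Straightforward,'' and the argument you give---unwinding the witnessing chain inside $N^+$, amalgamating over $M_0$ to identify $g_1(\ba)$ with $g_2(\bb)$, transporting the chain by Invariance, and pulling back along an extension of $g_2^{-1}$---is exactly the routine verification being omitted. The only cosmetic remark is that you need not normalize $N_0 = M_0$; the definition only requires the first model of the witnessing chain to contain $M_0$, which your final chain $\seq{G[g_1[N_i]]}$ satisfies in any case.
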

\begin{proof}
  Straightforward.

\end{proof}

\begin{remark}
When dealing with types rather than sequences, the $N^+$ in the definition can be avoided. That is, given $p \in \Sabs{\beta} (N)$ that does not fork over $M$, there is some $\seq{a_i : i < \beta}$, $\seq{N^i: i \leq \beta}$ such that $p = \tp(\seq{a_i : i < \beta}/N; N^\beta)$ that witnesses that $\seq{a_i : i < \beta}$ is independent from $N$ over $M$ in $N^\beta$.
\end{remark}

\begin{lem}\label{pre-frame-transfer}
Let $\s := (K, \nf, \Sbs)$ be a pre-$\F$-frame, where $\F = [\lambda, \theta)$. Let $\alpha \le \theta$. If $K_{\F}$ has amalgamation, then $\s^{<\alpha}$ is a pre-$(<\alpha, \F)$-frame.
\end{lem}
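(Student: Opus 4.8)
The plan is to check the three defining clauses of a pre-$(<\alpha,\F)$-frame for $\s^{<\alpha}=(K,\nf^{<\alpha},\Sabs{<\alpha})$, namely invariance, monotonicity, and ``nonforking types are basic''; the structural requirements (that $K$ is the same AEC with $\lambda\ge\LS(K)$ and that $\nf^{<\alpha}$, $\Sabs{<\alpha}$ have the correct sorts) hold by construction. Three of these are quick. ``Nonforking types are basic'' is immediate from the definition of $\Sabs{<\alpha}$: if $\nf^{<\alpha}(M,M,\ba,N)$, then the pair $(\ba,N)$ itself witnesses $\tp(\ba/M;N)\in\Sabs{<\alpha}(M)$. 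That basic types are nonalgebraic, i.e.\ $\Sabs{<\alpha}\subseteq\bigcup_{M\in K_\F}\Sana{<\alpha}(M)$, follows because any witnessing chain gives $\nfs{M}{a_i}{M_i}{M_{i+1}}$, whence $\tp(a_i/M_i;M_{i+1})\in\Sbs(M_i)$ by the corresponding property of $\s$, so each $a_i\notin M_i\gea M$. Invariance under an isomorphism $f:N\cong N'$ is obtained by extending $f$ to an isomorphism defined on a common extension containing a witnessing chain (as in the proof of Lemma~\ref{ext-vars}) and applying invariance for $\s$ to each link; the sharper fact that independence depends only on $\tp(\ba/M_1;N)$ is exactly Lemma~\ref{type-indep}, which I will reuse below.

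The bulk of the work is monotonicity, which I would split into the separate moves of the axiom and compose: passing to a subsequence, enlarging the base $M_0\to M_0'$, shrinking the intermediate model $M_1\to M_1'$, and shrinking or enlarging the ambient model. Shrinking the intermediate model is free, since in Definition~\ref{indep-seq-def} the intermediate model is only required to lie between the base and the start of the witnessing chain. Enlarging the base from $M_0$ to $M_0'$ (with $M_0\lea M_0'\lea M_1'$) is handled by applying base monotonicity of $\nf$ to each link $\nfs{M_0}{a_i}{M_i}{M_{i+1}}$ of a witnessing chain, which is legitimate because the enlarged base still lies below every model of that chain. Shrinking the ambient model needs nothing, as the witnessing $N^+$ already sits above the smaller model; enlarging it to $N''$ is the only place amalgamation enters, via amalgamating $N^+$ with $N''$ over $N$ to produce a common extension over which the old chain remains a witness.

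The one genuinely delicate point, and the step I expect to be the main obstacle, is subsequence monotonicity: if $\seq{a_i:i<\beta}$ is independent over $M$ with witness $\seq{M_i:i\le\beta}$ and $X\subseteq\beta$, then $\seq{a_i:i\in X}$ is again independent over $M$. Simply deleting the unwanted models from the chain breaks continuity at limit stages, since $X$ need not be closed. Instead I would enumerate $X$ increasingly as $\seq{x_j:j<\gamma}$ and build a new increasing continuous chain $\seq{P_j:j\le\gamma}$ by setting $P_j:=M_{x_j}$ at successors (and at $0$) and $P_j:=\bigcup_{j'<j}P_{j'}$ at limits, filling the gaps so that $P_j\lea M_{x_j}$ always holds. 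The required link $\nfs{M}{a_{x_j}}{P_j}{P_{j+1}}$ is then extracted from the original link $\nfs{M}{a_{x_j}}{M_{x_j}}{M_{x_j+1}}$ by shrinking the domain from $M_{x_j}$ to $P_j$ (trivial at successors) and enlarging the ambient model from $M_{x_j+1}$ to $P_{j+1}\gea M_{x_j+1}$, both instances of monotonicity for $\nf$. Once every link is in place, $\seq{P_j}$ together with the original $N^+$ witnesses independence of the subsequence, and the case base $=$ intermediate gives at the same time the required monotonicity of $\Sabs{<\alpha}$ under subsequences.
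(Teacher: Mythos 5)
Your proof is correct and follows the same route as the paper's, which simply cites Lemma \ref{type-indep} for invariance, notes that enlarging the ambient model reduces to invariance (via amalgamation over $N$), and declares the remaining clauses straightforward. Your careful treatment of subsequence monotonicity---re-enumerating $X$ increasingly and rebuilding a continuous chain $\seq{P_j}$ using only monotonicity of $\nf$ at each link---is exactly the detail the paper leaves implicit, and it is sound.
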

\begin{proof}
  Invariance is Lemma \ref{type-indep}. For monotonicity, one can also use invariance to see that if $\ba$ is independent from $M_1$ over $M_0$ in $N$ and $N' \gea N$, then $\ba$ is independent from $M_1$ over $M_0$ in $N'$. The rest is straightforward.
\end{proof}

The next result shows that local character and existence are preserved when elongating a frame:

\begin{thm}\label{lc-ext-transfer} 
  Assume $\s := (K, \nf, \Sbs)$ is a $\goodm$ $\F$-frame, where $\F = [\lambda, \theta)$. Then:
  
  \begin{enumerate}
  \item\label{lc-transfer} $\s^{<\theta}$ has local character. Moreover, if $p \in \Sabs{\alpha} (N)$ with $\alpha < \theta$, then there exists $M \lea N$ in $K_{\le \lambda + |\alpha|}$ such that $p$ does not fork over $M$.
  \item $\s^{<\theta}$ has existence.
  \end{enumerate}
\end{thm}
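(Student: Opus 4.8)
The plan is to treat the two clauses separately, in each case reducing the statement about the elongated frame to the corresponding property of the base frame $\s$ applied coordinate-by-coordinate to an independent sequence.

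For local character (clause \ref{lc-transfer}), suppose $\seq{M_i : i \le \delta}$ is increasing continuous, $p = \tp(\ba / M_\delta; N) \in \Sabs{\alpha}(M_\delta)$ with $\ba = \seq{a_j : j < \alpha}$ and $\alpha < \cf{\delta}$, and fix a witnessing chain $\seq{P_j : j \le \alpha}$, so that $M_\delta \lea P_0$ and $\nfs{M_\delta}{a_j}{P_j}{P_{j+1}}$ for each $j$. First I would observe, using monotonicity (to shrink the middle model to $M_\delta$) and the pre-frame axiom that nonforking types are basic, that each coordinate type $r_j := \tp(a_j / M_\delta; P_{j+1})$ lies in $\Sbs(M_\delta)$ and has length $1 < \cf{\delta}$. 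Applying local character of $\s$ to $r_j$ along $\seq{M_i}$ produces $i_j < \delta$ over which $r_j$ does not fork; transitivity of $\s$, combined with $\nfs{M_\delta}{a_j}{P_j}{P_{j+1}}$, then gives $\nfs{M_{i_j}}{a_j}{P_j}{P_{j+1}}$. Setting $i := \sup_{j<\alpha} i_j < \delta$ (this is where $\alpha < \cf{\delta}$ is used) and raising the base via monotonicity yields $\nfs{M_i}{a_j}{P_j}{P_{j+1}}$ for all $j$, i.e.\ $\ba$ is independent from $M_\delta$ over $M_i$, so $p$ does not fork over $M_i$. For the ``moreover'' clause I would run the identical coordinate-wise reduction, but invoke Proposition \ref{kappabar} (applicable since $\s$ is a frame for singletons) to obtain $M_j \in K_\lambda$ below $N$ over which $r_j$ does not fork, then use downward Löwenheim--Skolem to capture $\bigcup_{j<\alpha} M_j$ inside a single $M \lea N$ of size $\le \lambda + |\alpha| < \theta$, and conclude by transitivity and monotonicity as before.

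For existence (clause (2)), let $p \in \Sabs{\gamma}(M)$ with $\gamma = \ell(p)$, witnessed by $\ba = \seq{a_j : j<\gamma}$ and a chain $\seq{P_j : j \le \gamma}$, and let $N \gea M$ and $\gamma \le \beta < \theta$ be given. I would construct by recursion an increasing continuous chain $\seq{R_\ell : \ell \le \beta}$ with $R_0 \gea N$, together with elements $b_\ell$ satisfying $\nfs{M}{b_\ell}{R_\ell}{R_{\ell+1}}$, keeping $\|R_\ell\| \le \|N\| + |\beta| < \theta$ by Löwenheim--Skolem so that all models stay in $K_\F$. For $\gamma \le \ell < \beta$ the new coordinate is harmless: pick any nonalgebraic $r_\ell \in \Sbs(M)$ (nonempty by density and no maximal models) and let $b_\ell$ realize its nonforking extension to $R_\ell$ by existence of $\s$. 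The resulting $\seq{b_\ell : \ell < \beta}$ is then independent from $N$ over $M$, and since each coordinate also does not fork over $N$ by monotonicity, the type $q := \tp(\seq{b_\ell : \ell<\beta} / N; R_\beta)$ lies in $\Sabs{\beta}(N)$ and does not fork over $M$; it extends $p$ provided the first $\gamma$ coordinates are built to reproduce $p$ over $M$.

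The crux is therefore the successor step for $\ell < \gamma$, where $b_\ell$ must be added so that $\nfs{M}{b_\ell}{R_\ell}{R_{\ell+1}}$ \emph{and} $\tp(\seq{b_{\ell'} : \ell' \le \ell}/M; R_{\ell+1}) = p^{[0,\ell]}$. The obstacle is that taking the nonforking extension of $r_\ell := \tp(a_\ell/M)$ over a \emph{small} model carrying $\seq{b_{\ell'}}$ would give the correct type but not independence over all of $R_\ell$, while taking it over $R_\ell$ gives independence but not obviously the correct type --- and there is no prime model over $M$ together with $\seq{b_{\ell'}}$ to bridge the two (precisely the failure noted in the introduction). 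I would resolve this by amalgamating: using the inductive match $\seq{b_{\ell'}:\ell'<\ell} \equiv_M \seq{a_{\ell'}:\ell'<\ell}$, take $D \gea R_\ell$ and $\sigma : P_{\ell+1} \to D$ over $M$ with $\sigma(a_{\ell'}) = b_{\ell'}$ for $\ell' < \ell$, then let $b_\ell$ realize the nonforking extension of $r_\ell$ over the \emph{whole} of $D$. Monotonicity gives $\nfs{M}{b_\ell}{R_\ell}{R_{\ell+1}}$, while restricting to $\sigma[P_\ell] \lea D$ and applying uniqueness of $\s$ identifies $\tp(b_\ell/\sigma[P_\ell])$ with $\tp(\sigma(a_\ell)/\sigma[P_\ell])$; since $\seq{b_{\ell'}:\ell'<\ell} \subseteq \sigma[P_\ell]$ and $\sigma$ fixes $M$, this forces $\tp(\seq{b_{\ell'}:\ell'\le\ell}/M) = \tp(\seq{a_{\ell'}:\ell'\le\ell}/M) = p^{[0,\ell]}$. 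At limit $\ell$ I would take unions and use continuity of Galois types over chains (so that matching of all proper initial segments propagates to $\ell$) together with monotonicity to preserve independence. I expect this successor step --- a nonforking extension over the large amalgam combined with uniqueness of $\s$ to recover the correct joint type without a prime model --- to be the main difficulty, and the one place where uniqueness of the base frame is genuinely essential.
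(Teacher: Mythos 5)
Your proof of clause (1), including the moreover part, is correct and is essentially the paper's argument verbatim: reduce to the coordinate types $\tp(a_j/M_\delta;P_{j+1})\in\Sbs(M_\delta)$ via monotonicity, apply local character (resp.\ Proposition \ref{kappabar}) of $\s$ to each, lift the base with transitivity, and take a supremum (resp.\ a L\"{o}wenheim--Skolem hull) using $\alpha<\cf{\delta}$.

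For clause (2) there is a genuine gap at the limit stages of your recursion. Your successor step is fine --- amalgamating $P_{\ell+1}$ over $M$ so that $\sigma(\ba_{<\ell})=\bb_{<\ell}$, taking the nonforking extension of $r_\ell$ over the big amalgam $D$, and using uniqueness of $\s$ over $\sigma[P_\ell]$ is sound, and is in substance the paper's Lemma \ref{ext-vars}. But at a limit $\ell$ you justify $\tp(\bb_{<\ell}/M)=\tp(\ba_{<\ell}/M)$ by ``continuity of Galois types over chains,'' i.e.\ by the principle that if all proper initial segments of two tuples realize the same Galois type over $M$ then so do the tuples themselves. This is a nontrivial locality property of Galois types (a form of type shortness in the length of the type) that fails in general AECs and is not among the hypotheses; note that the closest available surrogate, continuity of $\s^{<\theta}$, only asserts the \emph{existence} of a common extension of a coherent chain of types, not its uniqueness, and moreover its proof in the paper (Lemma \ref{amal-indep-seq} and Corollary \ref{uniq-cont}) itself uses the existence property you are in the middle of proving, so appealing to it here would be circular. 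Because you re-derive $\sigma$ from scratch at each successor step using only the type equality $\bb_{<\ell}\equiv_M\ba_{<\ell}$, your embeddings need not cohere, and there is nothing to take a union of at limits.

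The repair is exactly what the paper does: carry along an increasing continuous system of $K$-embeddings $f_i:M^i\to N^i$ of the \emph{entire} witnessing chain of $p$ into the new chain over $N$, with $f_0\rest M=\id$, extending $f_i$ to $f_{i+1}$ at successors via Lemma \ref{ext-vars} and taking unions of the $f_i$ at limits. Then the single map $f=f_\beta$ witnesses $\tp(f(\ba)/M;N^\beta)=\tp(\ba/M;M^\beta)$ with no locality assumption needed. (The paper also cleanly separates ``extend the domain'' from ``extend the length,'' whereas you interleave them; that difference is harmless, but the coherence of the embeddings is not optional.)
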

\begin{proof} \
\begin{enumerate}
\item Assume $p \in  \Sabs{\alpha} (N)$ and $N = \bigcup_{i < \delta}N_i$ with $\alpha < \delta < \theta$, $\delta$ a regular cardinal. Then, there is some $\ba = \seq{a_i : i < \alpha}$ and increasing, continuous $\seq{N^i : i \leq \alpha}$ such that $\alpha < \delta$, $p = \tp(\ba/N; N^\alpha)$, and, for all $i < \alpha$, $\nfs{N}{a_i}{N^i}{N^{i+1}}$.  By Monotonicity for $\s$, $\tp(a_i/N; N^{i+1}) \in \Sbs (N)$. By Local Character for $\s$, for all $i < \alpha$ there is some $j_i < \delta$ such that $\nfs{N_{j_i}}{a_i}{N}{N^{i+1}}$.  By Transitivity for $\s$, $\nfs{N_{j_i}}{a_i}{N^i}{N^{i+1}}$.  Set $j_* := \sup_{i < \alpha} j_i$; since $\delta = \cf{\delta} > \alpha$, we have that $j_* < \alpha$.  By Monotonicity for $\s$, $\nfs{N_{j_*}}{a_i}{N^i}{N^{i+1}}$ for all $i < \alpha$.  This is exactly what we need to conclude that $\ba$ is independent from $N$ over $N_{j_*}$ in $N^\alpha$.  Thus $p = \tp(\ba/N; N^\alpha)$ does not fork over $N_{j_*}$. 

  The moreover part is proved similarly: By Proposition \ref{kappabar}, for each $i < \alpha$ there exists $M^i \lea N$ in $K_\lambda$ such that $\nfs{M^i}{N}{a_i}{N^\alpha}$. By Transitivity, $\nfs{M^i}{N^i}{a_i}{N^\alpha}$. Now by the Löwenheim-Skolem axiom, there exists $M \lea N$ in $K_{\le \lambda + |\alpha|}$ such that $\bigcup_{i < \alpha} {M^i} \lea M$. By Monotonicity, $\nfs{M}{N^i}{a_i}{N^\alpha}$, so $\ba$ is independent from $N$ over $M$ in $N^\alpha$, so $p$ does not fork over $M$, as needed.
	
\item We prove two extension results separately: extending the domain and extending the length.  Combining these two results shows that $\s^{<\theta}$ has existence.
	
  For extending the domain, let $p \in \Sabs{<\theta} (M)$ and $N \gea M$.  By definition of this frame, there is some $\ba=\seq{a_i : i < \beta}$ and increasing, continuous $\seq{M^i :i \leq \beta}$ such that $\nfs{M}{a_i}{M^i}{M^{i+1}}$ for all $i < \beta$.  We wish to construct increasing and continuous $\seq{N^i : i \leq \beta}$ and $\seq{f_i : M^i \to N^i : i \leq \beta}$ such that 
	\begin{enumerate}
		\item $f_0 \rest M = \id$; and
		\item $\nfs{M}{f_i (a_i)}{N^i}{N^{i+1}}$.
	\end{enumerate}
	This is done by induction by taking unions at limits and by using Lemma \ref{ext-vars} at all successor steps.  Since $\beta < \theta$, $N^i$ is in $K_{\F}$ at all steps and the induction can continue.  Then $\tp(\ba/M; M^\beta) = \tp(f(\ba)/M; N^\beta)$ as witnessed by $f$ and $f(\ba)$ is independent in $(M, N, N^\beta)$.  Thus, $q = \tp(f(\ba)/N, N^\beta)$ is as desired.
	\[
\xymatrix{
N \ar@{.>}[r] & N^i \ar@{.>}[r] & N^{i+1} \ar@{.>}[r] & N^\beta\\
M \ar[u] \ar[r] & M^i \ar[r]  \ar@{.>}[u]^{f_i} & M^{i+1} \ar[r] \ar@{.>}[u]^{f_{i+1}} & M^\beta \ar@{.>}[u]^{f_\beta}
}
\]
	
	To extend the length, suppose that $\beta < \alpha < \theta$ and $p \in \Sabs{\beta} (N)$ does not fork over $M$.  This means that there is $\seq{a_i : i< \beta}$, $\seq{N^i : i \leq \beta}$ independent in $(M, N, N^\beta)$ such that $p = \tp(\seq{a_i : i < \beta}/N; N^\beta)$.  We will extend this sequence to be of length $\alpha$ by induction. At limit steps, simply taking the union of the extensions works.  If we have $\beta \leq \gamma < \alpha$ and have already extended to $\gamma$ (i.e.,  $\seq{a_i : i< \gamma}$, $\seq{N^i : i \leq \gamma}$ is defined), then let $r \in \Sbs (M)$ be arbitrary (use no maximal models and density of basic types).  Let $r^+ \in \Sbs (N^\gamma)$ be its nonforking extension.  Thus, there is $a_\gamma \in N^{\gamma+1}$ that realizes $r^+$ such that $\nfs{M}{a_\gamma}{N^{\gamma}}{N^{\gamma+1}}$.  Then $\seq{a_i : i < \gamma+1}$, $\seq{N^i:i \leq \gamma + 1}$ is independent from $N$ over $M$ in $N^{\gamma + 1}$, as desired.
\end{enumerate}
\end{proof}

The next technical lemma is key in showing that uniqueness and continuity are preserved when making a frame longer.  This allows us to put together two independent sequences into one.

\begin{lem}[Amalgamation of independent sequences]\label{amal-indep-seq}
Let $\s$ be a $\goodm$ $\F$-frame, and $\beta < \theta_\s$.  Suppose that $p, q \in \Sabs{\beta}(N)$ do not fork over $M$, that $p \rest M = q \rest M$, and that there are witnessing sequences $\ba_\ell = \seq{a_\ell^i : i < \beta}$, $\seq{N_\ell^i : i \leq \beta}$ independent from $N$ over $M$ in $N_\ell^{\beta}$ for $\ell = 0,1$  with $\ba_0 \vDash p$ and $\ba_1 \vDash q$.  Then, there are coherent, continuous, increasing $(N_i, f_{j, i})_{j<i \leq \beta}$ and $g_\ell^i : N_\ell^i \to N_i$ such that, for all $j < i < \beta$,
\[
\xymatrix{
& & N^j_1 \ar@{.>}[dr]_{g_1^j} \ar[rr] & & N_1^i \ar@{.>}[dr]_{g_1^i} \ar[rr] & & N_1^\beta \ar@{.>}[dr]_{g_1^\beta} &
\\
M \ar[r] & N \ar[ur] \ar[dr] & & N_j \ar@{.>}[rr]_{f_{j, i}} & & N_i \ar@{.>}[rr]_{f_{i, \beta}} & & N_\beta\\
& & N^j_0 \ar@{.>}[ur]_{g_0^j} \ar[rr] & & N_0^i \ar@{.>}[ur]_{g_0^i} \ar[rr] & & N_0^\beta \ar@{.>}[ur]_{g_0^\beta} &
}
\]
commutes, $g_0^{i+1}(a_0^i) = g_1^{i+1}(a_1^i)$, and\footnote{Note that $g_0^i[M] = g_1^i[M]$ by commutativity of the diagram.} $\nfs{g_0^{i+1}[M]}{g_0^{i+1}(a_0^i)}{f_{i, i+1}[N_i]}{N_{i+1}}$.
\end{lem}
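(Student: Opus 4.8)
The plan is to construct the system by induction on $i \leq \beta$, building the $N_i$ as an honest $\lea$-increasing continuous chain (so that every $f_{j,i}$ is an inclusion and coherence is automatic) together with the embeddings $g_\ell^i : N_\ell^i \to N_i$. Throughout I write $M^* := g_0^i[M] = g_1^i[M]$; this is well defined by the commutativity of the diagram, since $M \lea N$ and both $g_\ell^i$ restrict on $N$ to the fixed map $N \to N_i$. The one input I will use repeatedly is that, since $p \rest M = q \rest M$, restricting the $\beta$-types to the single coordinate $\{i\}$ (monotonicity of Galois types) gives $\tp(a_0^i / M; N_0^{i+1}) = \tp(a_1^i / M; N_1^{i+1})$.

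For the base case $i = 0$, I amalgamate $N_0^0$ and $N_1^0$ over $N$ (recall $N \lea N_\ell^0$) inside $K_\F$ to obtain $N_0$ and embeddings $g_\ell^0 : N_\ell^0 \to N_0$ agreeing on $N$; there is no nonforking requirement at this stage. At a limit $\delta$ I set $N_\delta := \bigcup_{j < \delta} N_j$ and define $g_\ell^\delta$ as the union of the $g_\ell^j$; continuity of the witnessing sequences $\seq{N_\ell^i}$ makes this well defined, $N_\delta \in K_\F$ since $\delta \leq \beta < \theta$, and the previously established clauses persist without new verification.

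The heart of the argument is the successor step $i \to i+1$, and this is the step I expect to be the main obstacle. Starting from $g_\ell^i : N_\ell^i \to N_i$, I first apply Lemma \ref{ext-vars} to $M \lea N_0^i \lea N_0^{i+1}$, the element $a_0^i$ (using $\nfs{M}{a_0^i}{N_0^i}{N_0^{i+1}}$), and the map $g_0^i : N_0^i \to N_i$. This produces $N_{i+1}^{(0)} \gea N_i$ and an extension $g_0^{i+1} : N_0^{i+1} \to N_{i+1}^{(0)}$ of $g_0^i$ such that, setting $c_i := g_0^{i+1}(a_0^i)$, we already have $\nfs{M^*}{c_i}{N_i}{N_{i+1}^{(0)}}$, which is the required nonforking clause. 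It remains to map $a_1^i$ to this same $c_i$. For this I compare two $1$-types over $g_1^i[N_1^i]$: the image $g_1^i(r_1)$ of $r_1 := \tp(a_1^i / N_1^i; N_1^{i+1})$, and $\tp(c_i / g_1^i[N_1^i]; N_{i+1}^{(0)})$. Both do not fork over $M^*$ (the first by invariance from $r_1$ not forking over $M$, the second by monotonicity applied to the nonforking clause above), and both restrict on $M^*$ to the common type $g_0^i(\tp(a_0^i / M)) = g_1^i(\tp(a_1^i / M))$ — this is exactly where the coordinatewise consequence of $p \rest M = q \rest M$ enters. By the \emph{uniqueness} property of $\s$ the two types coincide, so $a_1^i$ (which realizes $g_1^i(r_1)$ over $g_1^i[N_1^i]$) and $c_i$ realize the same Galois type; amalgamating accordingly yields $N_{i+1} \gea N_{i+1}^{(0)}$ and an extension $g_1^{i+1} : N_1^{i+1} \to N_{i+1}$ of $g_1^i$ with $g_1^{i+1}(a_1^i) = c_i$. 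Taking $f_{i,i+1}$ to be the inclusion $N_i \lea N_{i+1}$ and using monotonicity to push the nonforking clause up to the final $N_{i+1}$ completes the step.

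The remaining obligations — commutativity of all squares, the identifications $g_0^{i+1}(a_0^i) = g_1^{i+1}(a_1^i)$, and membership of every model in $K_\F$ — are immediate from the construction, from the fact that each $g_\ell^{i+1}$ extends $g_\ell^i$, and from $\beta < \theta$. The only genuinely nontrivial ingredients are Lemma \ref{ext-vars} and the uniqueness property of the $\goodm$ frame $\s$; in particular I expect to need neither symmetry nor bs-stability, consistent with $\s$ being only $\goodm$.
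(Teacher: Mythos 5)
Your proposal is correct and follows essentially the same route as the paper: induct on $i$, use Lemma \ref{ext-vars} at successor steps to realize a nonforking copy of $a_0^i$ over $N_i$, invoke the uniqueness property of $\s$ (via the coordinatewise consequence of $p\rest M = q\rest M$) to identify it with the image of $a_1^i$, and take unions/direct limits at limits. The only difference is cosmetic: the paper applies Lemma \ref{ext-vars} symmetrically to both sides (introducing intermediate models $M_\ell^{i}$ and maps $h_\ell^i, r_\ell^i$) before amalgamating over $N_i$, whereas you apply it to one side only and recover the other by a type-equality argument over $g_1^i[N_1^i]$; both are valid.
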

\begin{proof}
We will build:
	\begin{enumerate}
		\item models $\{N_i, M_\ell^i : i \leq \beta, \ell = 0, 1\}$;
		\item embeddings $\{h_\ell^i : N_\ell^i \to M_\ell^i, r_\ell^i:M_\ell^i \to N_i : i \leq \beta, \ell = 0,1\}$; and 
		\item coherent embeddings $\{f_{j, i}:N_j \to N_i, \hatr_\ell^{j, i}: M_\ell^j \to M_\ell^i : i \leq \beta, \ell=0, 1\}$
	\end{enumerate}
	such that, for $i < \beta$:
	\begin{enumerate}
	
		\item  \[
	\xymatrix{
	M_0^{i+1} \ar[r]_{r_0^{i+1}} & N_{i+1}\\
	N_i \ar[u] \ar[r] & M_1^{i+1} \ar[u]_{r_1^{i+1}}
	}
	\]
	commutes;
	
		\item\label{second-comm} \[
		\xymatrix{
		N_\ell^{i+1} \ar[rr]_{h_\ell^{i+1}}& & M_\ell^{i+1}\\
		N_\ell^i \ar[u] \ar[r]_{h_\ell^i}& M_\ell^i \ar[r]_{r_\ell^i} & N_i \ar[u]
		}
		\]
		commutes;
		
		\item $M_\ell^0 = N_0$, $r_\ell^0 = \id_{N_0}$ for $\ell=0, 1$, and 
		\[
		\xymatrix{
		N_0^0 \ar[r]_{h_0^0} & N_0\\
		N \ar[u] \ar[r] & N_1^0 \ar[u]_{h_1^0}
		}
		\]
		commutes;
		
		\item \label{4above}$\nfs{h_\ell^{i+1}[N_\ell^i]}{h_\ell^{i+1}(a_\ell^i)}{N_i}{M_\ell^{i+1}}$;
		
		\item \label{5above}$r_0^{i+1} \circ h_0^{i+1} (a_0^i) = r_1^{i+1} \circ h_1^{i+1}(a_1^i)$; and 
		
		\item \label{6above}$(N_i, f_{j, i})_{j < i \leq \beta}$ and $(M_\ell^i, \hatr_\ell^{j, i})_{j < i \leq \beta}$ are continuous, coherent systems generated by $\hatr_\ell^{i, i+1} = r_\ell^i$ and $f_{i, i+1} = r_0^i \upharpoonright N_i = r_1^i \upharpoonright N_i$.
		
	\end{enumerate}

        Once these objects have been constructed we will have the following commutative diagram  for $j < i \le \beta$:
        \[
        \xymatrix{
	& & N_i & & \\
	& M_0^i \ar[ur]_{r_0^i} & & M_1^i \ar[ul]_{r_1^i} & \\
	N_0^i \ar[ur]_{h_0^i} & & N_{j} \ar[uu]_{f_{j, i}} \ar[ur] \ar[ul] & & N_1^i \ar[ul]_{h_1^i}\\
	& M_0^j \ar[uu]_{\hatr_0^{j, i}} \ar[ur]_{r_0^j} & & M_1^j \ar[uu]_{\hatr_1^{j,i}} \ar[ul]_{r_1^j} & \\
	N_0^j \ar[uu] \ar[ur]_{h_0^j} & & & & N_1^j \ar[ul]_{h_1^j} \ar[uu]
	}
	\]

        We can then take $g_\ell^i := r_\ell^i \circ h_\ell^i$.  This gives the desired diagram by removing the $M^i_\ell$'s.  The function equality is given by (\ref{5above}) and the nonforking is given by applying $f_{i, i+1}$ to (\ref{4above}).
	
The construction proceeds by induction.  At stage $i$, we will construct $h_\ell^i, r_\ell^i, M_\ell^i$, and $N_i$ for $\ell = 0, 1$.  Also, at each stage, we implicitly extend the coherent system by the rule given in (\ref{6above}) above (at successor steps) or by taking direct limits (at limit steps).\\
	\underline{$i=0$:} Amalgamate $N_0^0, N_1^0$ over $N$ to get $N_0$. Also set $M_\ell^0 := N_0$ and $r_\ell^0 := \id_{N_0}$ for $\ell=0, 1$. \\
	\underline{$i$ limit:}  Take direct limits and use continuity to see everything is preserved. \\
	\underline{$i=j+1$:} Use Lemma \ref{ext-vars} --replace $(M, M_0, M_1, \ba, f, M_2)$ there with $(M, N_\ell^j, N_\ell^{j + 1}, a_\ell^j, r_\ell^j \circ h_\ell^j, N_j)$ here--to get $(h_\ell^{j+1}, M_\ell^{j+1})$ here, written as $(g, N)$ there
; this gives (\ref{4above}):
	$$\nfs{h_\ell^{j+1}[M]}{h_\ell^{j+1}(a_\ell^j)}{N_j}{M_\ell^{j+1}}$$
        
	By the commutative diagrams, $h_0^{j+1} \rest M = h_1^{j+1} \rest M$, so, since $a_0^j$ and $a_1^j$ have the same type over $M$, we have that:
	$$\tp(h_0^{j+1}(a_0^j)/h_0^{j+1}[M]; M_0^{j+1}) = \tp(h_1^{j+1}(a_1^j)/h_1^{j+1}[M]; M_1^{j+1})$$
	By Uniqueness for $\s$, these imply that:
	$$\tp(h_0^{j+1}(a_0^j)/N_j; M_0^{j+1}) = \tp(h_1^{j+1}(a_1^j)/N_j; M_1^{j+1})$$
	We can witness this with $r_\ell^{j+1}:M_\ell^{j+1} \to N_{j+1}$ for $\ell=0,1$; that is, $r_0^{j+1} \rest N_j = r_1^{j+1} \rest N_j$ and $r_0^{j+1} \circ h_0^{j+1}(a_0^j) = r_1^{j+1} \circ h_1^{j+1}(a_1^j)$.
\end{proof}

\begin{cor} \label{extend-indep-seq}
Let $\s = (K, \nf, \Sbs)$ be a $\goodm$ $\F$-frame. Suppose $M_0 \lea M \lea N$ are in $K_{\F}$ and $\alpha \leq \beta < \theta_{\s}$ are such that there are $p \in \Sabs{\alpha}(M)$ and $q \in \Sabs{\beta}(N)$ such that $q^\alpha \rest M = p$ and $p, q$ do not fork over $M_0$. If $\ba_p = \seq{a_p^i : i < \alpha}$, $\seq{N_p^i : i \leq \alpha}$ is independent from $M$ over $M_0$ in $N_p^\alpha$ such that $\ba_p \vDash p$ and $\ba_q = \seq{a_q^i : i < \beta}$, $\seq{N_q^i : i \leq \beta}$ is independent from $N$ over $M_0$ in $N_q^\beta$ such that $\ba_q \vDash q$, then there is $\seq{M_q^i : i \leq \beta}$ and $h_i: N_p^i \to M_q^i$ for $i \leq \alpha$ such that:
\begin{enumerate}
	\item $\ba_q$, $\seq{M_q^i : i \leq \beta}$ is independent from $N$ over $M_0$ in $M_q^\beta$;
	\item $N_q^i \lea M_q^i$ for all $i \leq \beta$; and 
	\item $h_{i+1}(a_p^i) = a_q^i$ and $\id_M \subseteq h_i \subseteq h_{i+1}$.
\end{enumerate}
\end{cor}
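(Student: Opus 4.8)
The plan is to reduce to the Amalgamation of Independent Sequences (Lemma \ref{amal-indep-seq}) applied to the length-$\alpha$ initial segments of the two given sequences, and then to graft on the remaining coordinates $\alpha \leq i < \beta$ of $\ba_q$ using the domain-extension lemma (Lemma \ref{ext-vars}). First I would record two routine monotonicity facts straight from Definition \ref{indep-seq-def}: truncating an independent sequence to an initial segment preserves independence, and shrinking the ``from'' model (replacing $N$ by any $M'$ with $M_0 \lea M' \lea N$) preserves independence, both being immediate from monotonicity of $\nf$ for $\s$. Applying these to $\ba_q$, the truncation $\seq{a_q^i : i < \alpha}$ together with $\seq{N_q^i : i \leq \alpha}$ is independent from $M$ over $M_0$ in $N_q^\alpha$; moreover $\tp(\seq{a_q^i : i < \alpha}/M; N_q^\alpha) = q^\alpha \rest M = p$. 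Thus $\ba_p$ (witnessed by $\seq{N_p^i : i \leq \alpha}$) and $\seq{a_q^i : i < \alpha}$ (witnessed by $\seq{N_q^i : i \leq \alpha}$) are two length-$\alpha$ sequences independent from $M$ over $M_0$, both realizing $p$ and hence agreeing on $M_0$.

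Next I would apply Lemma \ref{amal-indep-seq} to these two sequences, with its ``$N$'' and ``$M$'' played by $M$ and $M_0$ and length $\alpha$. This yields a coherent continuous system $(N_i^\ast, f_{j,i}^\ast)_{j < i \leq \alpha}$ with embeddings $g_0^{\ast i} : N_p^i \to N_i^\ast$ and $g_1^{\ast i} : N_q^i \to N_i^\ast$ satisfying $g_0^{\ast,i+1}(a_p^i) = g_1^{\ast,i+1}(a_q^i)$ and the clause $\nfs{g_0^{\ast i}[M_0]}{g_0^{\ast,i+1}(a_p^i)}{f_{i,i+1}^\ast[N_i^\ast]}{N_{i+1}^\ast}$. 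Since the $g_1^{\ast i}$ are coherent, I can fix an isomorphism $\sigma$ with domain $N_\alpha^\ast$ such that $\sigma \circ g_1^{\ast \alpha} = \id_{N_q^\alpha}$, and set $M_q^i := \sigma[f_{i,\alpha}^\ast[N_i^\ast]]$ and $h_i := \sigma \circ f_{i,\alpha}^\ast \circ g_0^{\ast i}$ for $i \leq \alpha$. Coherence then gives $N_q^i \lea M_q^i$, $h_i \subseteq h_{i+1}$, $h_i \rest M = \id_M$ (because $M \lea N \lea N_q^0 \lea N_q^i$, so on $N_q^i$ the renamed map is the inclusion), and $h_{i+1}(a_p^i) = a_q^i$; applying $\sigma \circ f_{i+1,\alpha}^\ast$ to the nonforking clause and using invariance of $\nf$ converts it into $\nfs{M_0}{a_q^i}{M_q^i}{M_q^{i+1}}$ for all $i < \alpha$. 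This settles all coordinates below $\alpha$.

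Finally I would build the tail $\seq{M_q^i : \alpha \leq i \leq \beta}$ by induction, taking unions at limits and, at a successor step $\alpha \leq i < \beta$, applying Lemma \ref{ext-vars} to $\nfs{M_0}{a_q^i}{N_q^i}{N_q^{i+1}}$ along the inclusion $N_q^i \hookrightarrow M_q^i$ to obtain $M_q^{i+1} \gea M_q^i$ and, after renaming the resulting embedding to an inclusion fixing $M_q^i$, $N_q^{i+1} \lea M_q^{i+1}$ with $\nfs{M_0}{a_q^i}{M_q^i}{M_q^{i+1}}$. Continuity of both chains keeps $N_q^i \lea M_q^i$ at limits, and collecting the nonforking clauses for all $i < \beta$ (together with $M_0 \lea N \lea N_q^0 \lea M_q^0$) shows that $\ba_q$, $\seq{M_q^i : i \leq \beta}$ is independent from $N$ over $M_0$ in $M_q^\beta$; this is (1), while (2) and (3) were secured along the way.

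The main obstacle is bookkeeping rather than conceptual: turning the abstract embeddings delivered by Lemmas \ref{amal-indep-seq} and \ref{ext-vars} into honest inclusions that fix $\ba_q$ and its witness, while staying coherent across the junction at $i = \alpha$ between the amalgamated overlap and the inductively built tail. In particular one must check that the single renaming isomorphism $\sigma$ simultaneously makes all of the $g_1^{\ast i}$ into inclusions; once that is arranged, the required nonforking statements reduce to invariance and monotonicity of $\nf$ for $\s$.
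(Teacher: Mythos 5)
Your argument is correct, and it uses the same engine as the paper's proof --- Lemma \ref{amal-indep-seq} followed by the standard renaming of the $q$-side embeddings into inclusions --- but it decomposes the length mismatch $\alpha \leq \beta$ differently. The paper first \emph{elongates} the $p$-witnessing sequence to length $\beta$ (quoting the already-established existence property of $\s^{<\theta_\s}$ from Theorem \ref{lc-ext-transfer}) and then applies Lemma \ref{amal-indep-seq} once at full length $\beta$; you instead truncate $\ba_q$ to its first $\alpha$ coordinates, amalgamate only the overlap, and grow the tail $\alpha \leq i < \beta$ afterwards by a fresh induction on Lemma \ref{ext-vars}. Your order of operations buys something: Lemma \ref{amal-indep-seq} needs the two sequences to realize the same type over the base, and on the overlap this is immediate from $q^\alpha \rest M = p$, whereas the paper's route implicitly requires the elongation of the $p$-sequence to be chosen so that the new coordinates match the types of the $a_q^i$ for $i \geq \alpha$. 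The price is extra bookkeeping at the junction $i = \alpha$ and at each tail step, where the renaming that turns the embedding $g \colon N_q^{i+1} \to N$ into an inclusion fixing $M_q^i$ requires the usual tacit assumption that the abstractly built $M_q^i$ meets $N_q^\beta$ only in $N_q^i$; you flag this correctly, and since $g \rest N_q^i = \id_{N_q^i}$ and the renaming carries $g(a_q^i)$ back to $a_q^i$ while fixing $M_0$ pointwise, invariance does deliver $\nfs{M_0}{a_q^i}{M_q^i}{M_q^{i+1}}$ as you claim. Both proofs are of comparable length; yours is slightly more robust in its hypotheses on the witnessing sequences, the paper's is slightly slicker because the tail extension is absorbed into a single citation of existence.
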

\begin{proof}
  First, extend the $p$-sequence to $\seq{a_p^i : i < \beta}$, $\seq{N_p^i : i \leq \beta}$ independent from $M$ over $M_0$ in $N_p^\beta$ (use that $\s^{<\theta_{\s}}$ has existence). We can then amalgamate these sequences over $M$ using Lemma \ref{amal-indep-seq}: there is $(N_i, f_{j, i})_{j<i \leq \beta}$ and $g_x^i:N_x^i \to N_i$ for $x = p, q$ and $i \leq \beta$ as above.  Since we have $g_q^\beta: N_q^\beta \cong g_q^\beta[N_q^\beta] \lea N_\beta$, we can extend $g_q^\beta$ to an $L(K)$-isomorphism $h$ with $N_\beta$ in its range.  Set $M_q^i := h^{-1} [N_i]$ for $i \leq \beta$.  Note that $h_i:= h^{-1} \circ g_q^i :N_q^i \to M_q^i$ is the identity.
\end{proof}

\begin{cor}\label{uniq-cont}
  Assume $\s := (K, \nf, \Sbs)$ is a $\goodm$ $\F$-frame, where $\F = [\lambda, \theta)$. Then:  

\begin{enumerate}
\item $\s^{<\theta}$ has uniqueness.
\item $\s^{<\theta}$ has continuity.
\end{enumerate}
\end{cor}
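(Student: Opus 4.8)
For uniqueness I would argue that Lemma~\ref{amal-indep-seq} does almost all of the work. Suppose $p, q \in \Ss^{<\theta}(N)$ do not fork over $M$ and $p \rest M = q \rest M$, and set $\beta := \ell(p)$. First note that $p, q$ are themselves basic: since $p$ does not fork over $M$, monotonicity of the pre-frame $\s^{<\theta}$ (Lemma~\ref{pre-frame-transfer}) applied with both middle models equal to $N$ gives $\nf^{<\theta}(N, N, \ba, N')$ for a witness $\ba \vDash p$, so by ``nonforking types are basic'' we get $p \in \Sabs{\beta}(N)$, and likewise $q \in \Sabs{\beta}(N)$. Now pick witnessing independent sequences $\ba_0 \vDash p$ and $\ba_1 \vDash q$ and feed them to Lemma~\ref{amal-indep-seq}. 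It returns a model $N_\beta$ and embeddings $g_0^\beta, g_1^\beta$ whose restrictions to $N$ agree and which satisfy $g_0^\beta(\ba_0) = g_1^\beta(\ba_1)$ coordinatewise (from the clauses $g_0^{i+1}(a_0^i) = g_1^{i+1}(a_1^i)$ and coherence of the system). Reading this off as an amalgam with comparison maps fixing $N$ is exactly the definition of $p = q$.

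For continuity the obstruction is that the domains $M_i$ and the lengths $\alpha_i$ both grow while Corollary~\ref{extend-indep-seq} needs a model over which the relevant types do not fork. My plan is to trade the growing domain for a fixed one. Using existence of $\s^{<\theta}$ (Theorem~\ref{lc-ext-transfer}), let $q_i \in \Sabs{\alpha_i}(M_\delta)$ be the nonforking extension of $p_i$ to $M_\delta$. Since $\s^{<\theta}$ has existence and uniqueness it has transitivity (Remark~\ref{trans-prop}), and I would use transitivity with part~(1) to check that $\seq{q_i : i < \delta}$ is coherent over the single model $M_\delta$: for $j < i$ the type $q_i^{\alpha_j}\rest M_\delta$ does not fork over $M_i$ by monotonicity and its restriction to $M_j$ is $p_j$, which does not fork over $M_j$, so by transitivity it does not fork over $M_j$ and by uniqueness equals $q_j$. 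Because $q_i \rest M_i = p_i$, any $p$ over $M_\delta$ with $p^{\alpha_i}\rest M_\delta = q_i$ will automatically satisfy $p^{\alpha_i}\rest M_i = p_i$, which is what we must produce.

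It then remains to glue the coherent chain $\seq{q_i : i < \delta}$, now living over the fixed model $M_\delta$, into a single type. I would build by recursion on $i < \delta$ a coherent, continuous, increasing directed system of witnessing independent sequences for the $q_i$: at a successor I apply Corollary~\ref{extend-indep-seq} with $p := q_i$, $q := q_{i+1}$ and common base $M_0 := M_{i+1}$ (both do not fork over $M_{i+1}$, and $q_{i+1}^{\alpha_i}\rest M_\delta = q_i$ by coherence), which embeds the stage-$i$ structure into the stage-$(i+1)$ one; at limits I take direct limits. The direct limit over all $i < \delta$ is an independent sequence over $M_\delta$ of length $\alpha_\delta$, so its type $p$ is in $\Sabs{\alpha_\delta}(M_\delta)$ and restricts to each $q_i$. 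For the moreover clause I would run the same construction with the given $M_0$ as the common base throughout—legitimate since each $q_i$ does not fork over $M_0$ by transitivity—so that the limit sequence is independent over $M_0$ and hence $p$ does not fork over $M_0$.

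The step I expect to be most delicate is the limit stages of this last recursion. There the direct limit only supplies a basic type over $M_\delta$ that agrees with $q_i$ on the cofinally many initial segments of lengths $\alpha_j$, $j < i$, and I must check it is actually $q_i$ before continuing the recursion and before reading off the restrictions. The two are basic types over the \emph{same} model agreeing on every initial segment, hence on the type of each single coordinate over $M_\delta$; so I expect to finish by matching the two witnessing sequences one coordinate at a time with the uniqueness of $\s$ for singletons—exactly the coordinatewise argument already used inside Lemma~\ref{amal-indep-seq}—while using monotonicity to keep control of the nonforking base.
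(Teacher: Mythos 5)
Your proof of part (1) is correct and is exactly the paper's argument: the paper's entire proof of uniqueness is ``this follows directly from Lemma \ref{amal-indep-seq}'', and your preliminary observation that $p$ and $q$ are basic (via monotonicity and ``nonforking types are basic'') together with reading off $g_0^\beta \rest N = g_1^\beta \rest N$ and $g_0^\beta(\ba_0) = g_1^\beta(\ba_1)$ from the amalgam is the right way to spell that out.

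Your proof of part (2) has a genuine gap at the very first step, the reduction to the fixed domain $M_\delta$. You claim the nonforking extensions $q_i \in \Sabs{\alpha_i}(M_\delta)$ of the $p_i$ form a coherent chain, justifying $q_i^{\alpha_j} \rest M_\delta = q_j$ by transitivity. But transitivity with intermediate model $M_i$ requires that $q_i^{\alpha_j} \rest M_i = p_i^{\alpha_j} \rest M_i$ \emph{not fork over $M_j$}, and this is not among the hypotheses of continuity: you are only given the equality of restrictions $p_i^{\alpha_j} \rest M_j = p_j$, not that $p_i^{\alpha_j} \rest M_i$ is a \emph{nonforking} extension of $p_j$. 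What you actually verify --- that the restriction to $M_j$ is $p_j$, which does not fork over $M_j$ --- is just the statement that a basic type does not fork over its own domain, and feeding that into transitivity is circular. Already for singletons the needed implication fails: $p_i$ basic over $M_i$ restricting to a basic $p_j$ over $M_j$ need not be a nonforking extension of $p_j$. So the $q_i$ need not cohere and the gluing cannot start; choosing the $q_i$ recursively so as to cohere is exactly the amalgamation problem you are trying to bypass and needs continuity at limits, which is circular. (Your reduction does work for the ``moreover'' clause, where all $p_i$ do not fork over a common $M_0$ and uniqueness over $M_0$ gives coherence --- which is precisely why the general clause cannot be reduced to it.) The paper avoids this by never changing the domain: it builds a coherent directed system of witnessing sequences for the original $p_i$ over the growing $M_i$ (Corollary \ref{extend-indep-seq} at successors, direct limits at limits), and only at the end obtains $\nfs{M_\delta}{a_\delta^k}{M_\delta^k}{M_\delta^{k+1}}$ coordinate by coordinate, by applying the continuity property of $\s$ for $1$-types to the chains $M_\delta^k = \bigcup_{i<\delta} f_{i,\delta}^k(M_i^k)$ and then monotonicity to raise the base from $M_i$ to $M_\delta$. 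Your successor and limit steps are essentially the paper's, but they must be run on the $p_i$ themselves, not on purported coherent extensions to $M_\delta$.
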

\begin{proof} \
\begin{enumerate}
\item This follows directly from Lemma \ref{amal-indep-seq}.
\item We prove the moreover clause in the definition of continuity.  For the main clause, the $M_0$'s appearing in this proof can be replaced by $M_i$ or $M_\delta$ as appropriate.

For all $i < \delta$, there is some $\ba_i = \seq{a_i^k : k < \alpha_i}$, $\seq{N_i^k : k \leq \alpha_i}$ independent from $M_i$ over $M_0$ in $N_i^{\alpha_i}$ such that $p_i = \tp(\ba_i/M_i; N_i^{\alpha_i})$.  

We will construct $\seq{M_i^k : i < \delta, k \leq \alpha_i}$ and $\{f_{j, i}^k:M_j^k \to M_i^k : k \leq \alpha_j, j < i < \alpha_\delta\}$ such that
\begin{enumerate}

	\item $N_i^k \lea M_i^k$ and $\ba_i, \seq{M_i^k : k < \alpha_i}$ is independent from $M_i$ over $M_0$ in $M_i^{\alpha_i}$;
	\item for each $k \leq \alpha_j$, $(M_i^k, f_{l, i}^k)_{j \leq l \leq i < \alpha_\delta}$ is a coherent, direct system such that
	\[
	\xymatrix{
	M_{i_2} \ar[r] & M_{i_2}^{k_0} \ar[r] & M_{i_2}^{k_1} \\
	M_{i_1} \ar[r] \ar[u] & M_{i_1}^{k_0} \ar[r] \ar[u]^{f_{i_1, i_2}^{k_0}} & M_{i_1}^{k_1} \ar[u]^{f_{i_1, i_2}^{k_1}}\\
	M_{i_0} \ar[r] \ar[u] & M_{i_0}^{k_0} \ar[r] \ar[u]^{f_{i_0, i_1}^{k_0}} & M_{i_0}^{k_1} \ar[u]^{f_{i_0, i_1}^{k_1}} \ar@/_1pc/[uu]_{f_{i_0, i_2}^{k_1}}
	}
	\]
	commutes; and
	\item $f_{j, i}^k(a_j^k) =a_i^k$. 

\end{enumerate}

This is possible: just apply Corollary \ref{extend-indep-seq} at successors and take direct limits at limits.


This is enough. For each $k < \alpha_\delta$, set $(M_\delta^k, f_{i, \delta}^k)_{i<\delta, k \leq \alpha_i} = \varinjlim (M_i^k, f_{j, i}^k)$.  Then $\seq{M_\delta^k : k < \alpha_\delta}$ is increasing and continuous because each $\seq{M_i^k : k < \alpha_i}$ is.  Set $M_\delta^{\alpha_\delta} := \cup_{k < \alpha_\delta} M_\delta^k$.  For $k < \alpha_i, \alpha_j$, we have that $f_{i, \delta}^{k+1}(a_i^k) = f_{j, \delta}^{k+1}(a_j^k)$.  Thus, there is no confusion in setting $a_\delta^k = f_{i, \delta}^{k+1}(a_i^k)$ for some/any $k < \alpha_i$.  Set $p = \tp(\ba_\delta/M_\delta, M_\delta^{\alpha_\delta})$.

Note that $M_\delta \lea M_\delta^0$; indeed $f_{i, \delta}^k \rest M_i$ is the identity for all $k \leq \alpha_i$.  Thus, we have that 
$$p_i = \tp(\ba_i/M_i; M_i^{\alpha_i}) = \tp(\seq{a_\delta^k:k < \alpha_i}/ M_i; M_\delta^{\alpha_\delta}) = p^{\alpha_i} \rest M_i$$

{\bf Claim:} For all $k < \alpha_\delta$, $\nfs{M_0}{a_\delta^k}{M_\delta^k}{M_\delta^{k+1}}$.

{\bf Proof of Claim:} Given $i < \delta$ and $k < \alpha_i$, we have by construction that $\nfs{M_0}{a_i^k}{M_i^k}{M_i^{k+1}}$.  Applying $f_{i, \delta}^k$ to this, we get $\nfs{M_0}{a_\delta^k}{f_{i, \delta}^k(M_i^k)}{f_{i, \delta}^{k+1}(M_i^{k+1})}$.  By construction, 
$$M_\delta^k = \bigcup_{i < \delta} f_{i, \delta}^k(M_i^k) \text{ and } M_\delta^{k+1} = \bigcup_{i < \delta} f_{i, \delta}^{k+1}(M_i^{k+1})$$
Thus, by Continuity for $\s$, we have, for all $i < \delta$, $\nfs{M_0}{a_\delta^k}{M_\delta^k}{M_\delta^{k+1}}$.

Thus, $\ba_\delta, \seq{M_\delta^k: k \leq \alpha_\delta}$ is independent from $M_\delta$ over $M_0$ in $M^{\alpha_\delta}_\delta$.  So $p \in \Sabs{\alpha_\delta} (M_\delta)$ and extends each $p_i$ as desired.

\end{enumerate}
\end{proof}
\begin{remark}\label{jarden-serial-rmk}
  Note that a special case (when $\F = [\lambda, \lambda^+]$) of the continuity property above is Jarden's $\lambda^+$-continuity of serial independence (see \cite[Definition 5.3]{jarden-tameness-apal}). This allows Jarden's proof that symmetry transfers up (\cite[Theorem 5.4]{jarden-tameness-apal}) to go through without any extra hypotheses.  Another corollary of continuity is what Jarden and Sitton call the \emph{finite continuity property} (see \cite[Definition 8.2]{jasi}).  This is discussed in detail in Section \ref{continuity-subsec}.
\end{remark}

Putting everything together, we obtain that all the property of a $\goodm$ frame transfer to the elongation; recall that $\goodm$ frames are good frames except they might fail stability and/or symmetry. We will later see that symmetry transfers to finite sequences and give conditions under which it transfers to all sequences. 

\begin{cor}\label{tuple-transfer}
  Assume $\s$ is a $\goodm$ $\F$-frame. Then $\s^{<\theta_\s}$ is a $\goodm$ $(<\theta_\s, \F)$-frame.
\end{cor}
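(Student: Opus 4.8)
The plan is to verify directly that $\s^{<\theta_\s}$ satisfies every axiom of Definition \ref{good-frame-def} with the possible exceptions of symmetry and bs-stability, since these are precisely the properties excluded by $\goodm = \goodms{(S, St)}$. Concretely, I need: the pre-frame axioms (invariance, monotonicity, and nonforking types are basic); amalgamation, joint embedding, and no maximal models for $K_\F$; density of basic types; existence; uniqueness; local character; continuity; and transitivity.

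Most of this has already been assembled in the preceding results, so the bulk of the proof is bookkeeping. The pre-frame axioms are exactly Lemma \ref{pre-frame-transfer}. Local character and existence are Theorem \ref{lc-ext-transfer}, while uniqueness and continuity are Corollary \ref{uniq-cont}. Since $\s^{<\theta_\s}$ is built over the same underlying AEC $K$ as $\s$ (only the nonforking relation and the basic types change), the structural conditions that $K_\F$ has amalgamation, joint embedding, and no maximal models are inherited verbatim from the hypothesis that $\s$ is a $\goodm$ frame.

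This leaves density of basic types and transitivity. For transitivity I would simply invoke Remark \ref{trans-prop}: since $\s^{<\theta_\s}$ has both existence and uniqueness, transitivity comes for free. For density, the one point genuinely needing an argument, I would first observe that every basic type of $\s$ does not fork over its own domain: applying existence for $\s$ with the target model taken to be $M$ itself, any $p \in \Sbs(M)$ admits a nonforking extension $q \in \Sabs{1}(M)$ with $q \rest M = p$, forcing $q = p$ and hence $\nfs{M}{a}{M}{N}$ whenever $p = \tp(a/M; N)$. By the length-one case noted immediately after Definition \ref{indep-seq-def}, the sequence $\seq{a}$ is then independent from $M$ over $M$ in $N$, so $p$ is basic for $\s^{<\theta_\s}$; that is, $\Sbs(M) \subseteq \Sabs{1}(M)$. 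Density for $\s^{<\theta_\s}$ now follows at once from density for $\s$, since density only asks for a single element $a \in N$ with a basic type over $M$.

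There is no deep obstacle here: the statement is genuinely an assembly of Lemma \ref{pre-frame-transfer}, Theorem \ref{lc-ext-transfer}, and Corollary \ref{uniq-cont}, together with the inheritance of the AEC-level properties and the free transitivity from Remark \ref{trans-prop}. The only step that is not a direct citation is density, and there the mild subtlety is checking that the singleton basic types of $\s$ survive as basic types of $\s^{<\theta_\s}$ --- which, as above, reduces to the standard fact that in a frame with existence, basic types do not fork over their own domain.
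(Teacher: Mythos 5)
Your proposal is correct and follows essentially the same route as the paper: the paper's proof is exactly the assembly of Lemma \ref{pre-frame-transfer}, Theorem \ref{lc-ext-transfer}, Corollary \ref{uniq-cont}, Remark \ref{trans-prop}, and the inheritance of the AEC-level properties and density from $\s$. The only difference is that you spell out why density transfers (namely that, by existence, every basic type of $\s$ does not fork over its own domain and hence is a basic $1$-type of $\s^{<\theta_\s}$), a point the paper leaves implicit.
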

\begin{proof}
  Set $\theta := \theta_\s$.  $\s^{<\theta_\s}$ is a pre-$(<\theta, \F)$-frame by Lemma \ref{pre-frame-transfer}. Amalgamation, joint-embedding, no maximal models, and density of basic types hold since they hold in $\s$. Existence and local character hold by Theorem \ref{lc-ext-transfer}, uniqueness and continuity hold by Corollary \ref{uniq-cont}. Finally, transitivity follows from Remark \ref{trans-prop}.
\end{proof}

Note that bs-stability only mentions basic 1-types, so it transfers immediately.  Thus, the only property left is symmetry, which is discussed in the next two sections.

We conclude by proving a concatenation lemma for independent sequences. This is already proved for good frames in \cite[Proposition 4.1]{jasi}, but the proof relies on \cite[Proposition 2.6]{jasi}, which is proved as \cite[Proposition 3.1.8]{jrsh875} and uses symmetry in an essential way. Here, we improve this to just requiring that $\s$ is a pre-frame that also satisfies amalgamation, existence, continuity, and transitivity. In particular, we avoid any use of symmetry or nonforking amalgamation. This shows that the situation is somewhat similar to the first-order context, where concatenation holds in any theory (see, e.g.,\ \cite[Lemma 1.6]{primer}).

\begin{thm}[Concatenation]\label{concat}
Assume $\s$ is a pre-$\F$-frame with amalgamation, existence, transitivity, and continuity.  Let $M \lea M_0 \lea M_1 \lea M_2$ be such that $\ba = \seq{a_i:i < \alpha}$ is independent from $M_0$ over $M$ in $M_1$ and $\bb = \seq{b_i : i < \beta}$ is independent from $M_1$ over $M$ in $M_2$.  Then $\ba\bb$ is independent from $M_0$ over $M$ in $M_2$.
\end{thm}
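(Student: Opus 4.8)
The plan is to unwind Definition \ref{indep-seq-def} and glue the two witnessing chains into a single one. Fix witnesses: since $\ba$ is independent from $M_0$ over $M$ in $M_1$, there is an increasing continuous $\seq{P_i : i \le \alpha}$ with $P_0 = M_0$, together with a model $N^+_a \gea P_\alpha$ satisfying $M_1 \lea N^+_a$, such that $\nfs{M}{a_i}{P_i}{P_{i+1}}$ for all $i < \alpha$; note $\ba \in P_\alpha$ and $\ba \in M_1$ denote the same elements inside $N^+_a$. Similarly, since $\bb$ is independent from $M_1$ over $M$ in $M_2$, there is a chain $\seq{Q_j : j \le \beta}$ with $Q_0 = M_1$ and $\nfs{M}{b_j}{Q_j}{Q_{j+1}}$ for $j < \beta$. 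The naive idea is to set $R_i := P_i$ for $i \le \alpha$ and then continue with the $Q_j$'s, but the obstruction — which I expect to be the only real difficulty — is that the $a$-chain terminates at $P_\alpha$, which is in general \emph{incomparable} with the base $M_1 = Q_0$ of the $b$-chain. The chains therefore do not concatenate directly, and since we have no uniqueness available, we cannot move the actual $\bb$ to sit nonforking over $P_\alpha$.

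To circumvent this I will not transport the actual $\bb$ but instead build a copy with the correct type over $M_0$ and appeal to invariance at the end. Both $P_\alpha$ and $M_1$ embed into $N^+_a$ over $M_0$, so $N^+_a$ serves as a common amalgam. First I extend the domain of $\tp(\bb / M_1)$ from $M_1$ up to $N^+_a$: running the \emph{extending the domain} construction from the proof of Theorem \ref{lc-ext-transfer} — which uses only amalgamation, existence, and transitivity, via Lemma \ref{ext-vars} at successors and unions at limits, and which I carry out while tracking nonforking over the small base $M$ — produces a copy $\bb'' = \seq{b_j'' : j < \beta}$ and an increasing continuous chain $\seq{Q_j'' : j \le \beta}$ with $Q_0'' = N^+_a$, $\nfs{M}{b_j''}{Q_j''}{Q_{j+1}''}$ for $j < \beta$, together with an embedding over $M_1$ carrying $\bb$ to $\bb''$, so that $\tp(\bb'' / M_1; Q_\beta'') = \tp(\bb / M_1; M_2)$. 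In particular $\bb''$ is independent from $N^+_a$ over $M$.

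Now the chains glue. Define $R_k := P_k$ for $k \le \alpha$ and $R_{\alpha + j} := Q_j''$ for $1 \le j \le \beta$; this is increasing continuous, since at the junction $R_\alpha = P_\alpha \lea N^+_a = Q_0'' \lea Q_1''$ and the inclusion $P_\alpha \lea N^+_a$ is absorbed into the $Q''$-unions at limit stages. The clauses $\nfs{M}{a_i}{R_i}{R_{i+1}}$ ($i<\alpha$) and $\nfs{M}{b_j''}{R_{\alpha+j}}{R_{\alpha+j+1}}$ ($1 \le j < \beta$) are immediate, while for the single junction clause I apply Monotonicity for $\s$ to shrink the middle model in $\nfs{M}{b_0''}{N^+_a}{Q_1''}$ down to $\nfs{M}{b_0''}{P_\alpha}{Q_1''}$ (legitimate as $M \lea P_\alpha \lea N^+_a \lea Q_1''$). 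Thus $\ba\bb''$ is independent from $M_0$ over $M$ in $Q_\beta''$. Finally, because the domain extension was performed over $M_1 \ni \ba$, the isomorphism witnessing $\tp(\bb''/M_1) = \tp(\bb/M_1)$ fixes $\ba$, giving $\tp(\ba\bb'' / M_0; Q_\beta'') = \tp(\ba\bb / M_0; M_2)$; since $\ba\bb \in M_2 \gea M_0$, invariance (Lemma \ref{type-indep}) transfers independence from the copy to the original, yielding that $\ba\bb$ is independent from $M_0$ over $M$ in $M_2$, as required. (One also checks $\alpha + \beta < \theta_\s$, automatic since $\theta_\s$ is a cardinal or $\infty$, so that every model stays in $K_{\F}$.) The absence of uniqueness is exactly what forces the detour through a copy and Lemma \ref{type-indep} rather than a direct base change.
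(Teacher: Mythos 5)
Your proof is correct and is essentially the paper's own argument: both fix witnessing chains for $\ba$ and $\bb$, use Lemma \ref{ext-vars} at successors (and unions at limits) to produce a copy $\bb''$ of $\bb$ whose chain sits above the amalgam $N_a^+$ of the $\ba$-chain while preserving nonforking over $M$, concatenate the two chains, and pull the conclusion back to the original $\ba\bb$ via invariance (Lemma \ref{type-indep}). If anything, your handling of the junction (keeping $R_\alpha = P_\alpha$ and shrinking the middle model by monotonicity) is slightly more careful about continuity of the glued chain at a limit $\alpha$ than the paper's, which sets $N^\alpha = N_1^0$ there.
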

\begin{proof}
From the independence of $\ba$ from $M_0$ over $M$ in $M_1$, there is a continuous, increasing $\seq{M_0^i : i \leq \alpha}$ and $N_0^+$ such that
\begin{itemize}
	\item $M_0 \lea M_0^i \lea N_0^+$;
	\item $M_1 \lea N_0^+$; and
	\item $\nfs{M}{a_i}{M_0^i}{M_0^{i+1}}$.
\end{itemize}

From the independence of $\bb$ from $M_1$ over $M$ in $M_2$, there is a continuous, increasing $\seq{M_1^i : i \leq \beta}$ and $N_1^+$ such that
\begin{itemize}
	\item $M_1 \lea M_1^i \lea N_1^+$;
	\item $M_2 \lea N_1^+$; and
	\item $\nfs{M}{b_i}{M_1^i}{M_1^{i+1}}$.
\end{itemize}

Define increasing and continuous $\seq{N_1^i : i \leq \beta}$ and $\seq{g_i : M_1^i \to N_i : i \leq \beta}$ such that:
\begin{itemize}
	\item  $N_0^+ \prec N_1^0$ and $g_0 \rest M_1 = \id_{M_1}$; and
	\item For all $i < \beta$, $g_{i+1}(b_i) \nf_M^{N_1^{i+1}} N_1^i$.
\end{itemize}

This can easily be constructed by inductions: amalgamate $M_1^0$ and $N_0^+$ over $M_1$ to get $N_1^0$ and $g_0$.  At successor steps, apply Lemma \ref{ext-vars} and take unions at limit stages.

After this construction, amalgamate $N_1^+$ and $N_1^\beta$ over $M_1^\beta$ to get $N^{++}$ and $g$ so the following diagram commutes for $j < \beta$:
\[
\xymatrix{
& & & N_0^+ \ar@{.>}[r] & N_1^0 \ar@{.>}[r]& N_1^j \ar@{.>}[r] & N_1^{j+1} \ar@{.>}[r] & N^{++}\\
 & M_0^0 \ar[r] & M_0^\alpha \ar[ur]& & M_1^0 \ar@{.>}[u]_{g_0} \ar[r] & M_1^j \ar@{.>}[u]_{g_j} \ar[r] & M_1^{j+1} \ar@{.>}[u]_{g_{j+1}} \ar[r] & N_1^+ \ar@{.>}[u]_g\\
M \ar[r] & M_0 \ar[u] \ar[rr] & & M_1 \ar[uu] \ar[ur] \ar[rrrr] & & & & M_2 \ar[u]
}
\]

Define the sequence $\seq{N^i : i \leq \alpha + \beta}$ by
$$N^i := \begin{cases} M_0^i &\mbox{if } i < \alpha \\ 
N_1^j & \mbox{if }  i = \alpha + j \in [\alpha, \beta] \end{cases} $$

{\bf Claim:} This sequences witnesses that $\bc := \ba^\frown g(\bb)$ is independent from $M_0$ over $M$ in $N^{++}$.

{\bf Proof of Claim:}  It is easy to see that this sequence is of the proper type, i.e.\ it is increasing and continuous and $M_0 \lea N^i \lea N^{++}$ for all $i \le \alpha + \beta$.

If $i < \alpha$, then we need to show that $\nfs{M}{c_i}{N^i}{N^{i+1}}$, which is the same as $\nfs{M}{a_i}{M_0^i}{M_0^{i+1}}$. This just follows from independence of $\ba$.


If $i = \alpha + j \ge \alpha$, then we need to show that $\nfs{M}{c_i}{N^i}{N^{i+1}}$, which is the same as $\nfs{M}{g_{j + 1} (b_j)}{N_1^j}{N_1^{j+1}}$.  This holds directly by the construction. \hfill $\dag_{{\bf Claim}}$\\

Notice that the map $g$ shows that $\tp(\ba g(\bb)/M_0; N_1^\beta) = \tp(\ba\bb/M_0; M_2)$.  Thus, by Invariance (Lemma \ref{type-indep}), we have that $\ba\bb$ is independent from $M_0$ over $M$ In $M_2$.
\end{proof}



\section{Symmetry in long frames}\label{sym-long-frames}

In this section, we discuss when symmetry transfers from a good frame to its elongation. We do so by studying the following unordered version of independence:

\begin{defin}
  A set $I$ is said to be \emph{independent in $(M, M_0, N)$} if \emph{some} enumeration of $I$ is independent in $(M, M_0, N)$. As usual, we say instead that $I$ is independent from $M_0$ over $M$ in $N$.
\end{defin}

\subsection{Several versions of continuity}\label{continuity-subsec}

The notion of a set being independent gives rise to several notions of continuity. We gave a definition of continuity for a pre-frame $\s$, as well as continuity for the corresponding frame of independent sequences $\s^{<\theta_{\s}}$ (what Jarden calls the continuity of serial independence \cite[Definition 5.3]{jarden-tameness-apal}, see Remark \ref{jarden-serial-rmk}). We can now study the corresponding continuity properties for sets rather than sequences: for $\s$ a pre-$\F$-frame, let us say that $\s^{<\theta_{\s}}$ has the \emph{unordered continuity property} if for every increasing chain $\seq{M_{\alpha} : \alpha < \delta}$ every $N$ containing $\bigcup_{\alpha < \delta} M_\alpha$ and every $I \subseteq |N|$, $I$ is independent from $\bigcup_{\alpha < \delta} M_\alpha$ over $M_0$ if $I$ is independent from $M_\alpha$ over $M_0$ for all $\alpha < \delta$ (so the enumeration witnessing the independence is allowed to change each time). Confusingly, Jarden and Sitton \cite[Definition 5.5]{jasi} call this the continuity property.

Another notion of continuity was also introduced by Jarden and Sitton. Let us say that a set $I$ is \emph{finitely independent} (from $M_0$ over $M$ in $N$) if every finite subset of $I$ is. Jarden and Sitton \cite[Definition 8.2]{jasi} say that the finite continuity property holds when unordered continuity holds for the notion of finite independence. We will refer to this as \emph{unordered finite continuity}.

Jarden and Sitton show \cite[Proposition 8.4]{jasi} that unordered finite continuity holds in $\goodms{St}$ frames which satisfy the additional assumption of the conjugation property and being weakly successful. Using the (ordered) continuity property for independent sequences (Corollary \ref{uniq-cont}), together with Fact \ref{good-frame-permut} below, we immediately obtain that the unordered finite continuity holds in any $\goodms{St}$ frame.

\begin{fact}[Theorem 4.2.(a) in \cite{jasi}]\label{good-frame-permut}
  Let $\s$ be a $\goodms{St}$ $\F$-frame. If $\bar{a}$ is a finite tuple independent from $M'$ over $M$ in $N$, then any permutation of $\bar{a}$ is independent from $M'$ over $M$ in $N$.
\end{fact}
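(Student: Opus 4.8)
The final statement to prove is Fact \ref{good-frame-permut}, which asserts that in a $\goodms{St}$ $\F$-frame, any permutation of a finite independent tuple remains independent. Since the excerpt attributes this to Jarden and Sitton and presents it as a \emph{Fact} (a cited result), I will sketch how I would prove it from the frame axioms rather than merely quote it.

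The plan is to reduce the general permutation statement to the single adjacent transposition case, and then to reduce that to the swap of a two-element independent sequence. For the first reduction, I would recall that any permutation of a finite index set is a composition of adjacent transpositions, so by Invariance (Lemma \ref{type-indep}) and an induction on the number of transpositions, it suffices to show that swapping two consecutive entries $a_k, a_{k+1}$ of an independent sequence $\seq{a_i : i < n}$ preserves independence. For the second reduction, I would use the witnessing chain $\seq{M_i : i \le n}$ together with monotonicity: the entries $a_i$ with $i < k$ are unaffected and sit below $M_k$, and the entries with $i > k+1$ sit above $M_{k+2}$; so after swapping $a_k$ and $a_{k+1}$ I only need to rebuild the piece of the chain between $M_k$ and $M_{k+2}$, i.e.\ I only need to produce a model $M_{k+1}'$ with $M_k \lea M_{k+1}' \lea M_{k+2}$ (up to a suitable enlargement of the ambient model) such that $\nfs{M}{a_{k+1}}{M_k}{M_{k+1}'}$ and $\nfs{M}{a_k}{M_{k+1}'}{M_{k+2}}$.

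Thus the heart of the argument is the length-two case: given $M \lea M_0$, a sequence $\seq{a_0, a_1}$ and a chain $M_0 \lea M_1 \lea M_2$ with $\nfs{M}{a_0}{M_0}{M_1}$ and $\nfs{M}{a_1}{M_1}{M_2}$, produce (after possibly enlarging $M_2$) a model $M_1'$ with $\nfs{M}{a_1}{M_0}{M_1'}$ and $\nfs{M}{a_0}{M_1'}{M_2'}$. This is exactly where symmetry of $\s$ enters: from $\nfs{M}{a_0}{M_0}{M_1}$ and the fact that $a_1$ has a basic type over $M$ (which holds because $\tp(a_1/M_1) \in \Sbs$ restricts to a basic type over $M$, using density/monotonicity), the symmetry axiom yields a model containing $a_0$ over which $a_1$ is independent, and transitivity together with the original independence of $a_1$ over $M_1$ lets us assemble the swapped chain. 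I would be careful to track the base model $M$ throughout, since independence here is always computed over the common base $M$ rather than over the immediately preceding model.

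The main obstacle I anticipate is the bookkeeping in the two-element case: symmetry as stated in Definition \ref{good-frame-def} only guarantees the existence of \emph{some} witnessing models $M_1$ and $N' \gea N$, so I must verify that the model it produces can be fit back into the existing chain (after amalgamating to a common extension) without disturbing the independence of the unaffected entries. Concretely, the enlargement $N'$ forces me to work up to a larger ambient model and then invoke Invariance to transport the conclusion back to a tuple inside the original $N$; checking that the resulting enumeration still satisfies clause (d) of Definition \ref{indep-seq-def} for every index is the delicate point. Once the adjacent swap is established with this care, the induction over adjacent transpositions, combined with Invariance to realize the permuted tuple inside a fixed ambient model, closes the argument.
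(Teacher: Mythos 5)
The paper gives no proof of this statement: it is quoted as a black box from Jarden--Sitton (Theorem 4.2.(a) of \cite{jasi}), so what you are really doing is reproving a cited result from the frame axioms. Your overall architecture --- reduce to adjacent transpositions, then handle the two-element swap via the symmetry axiom --- is indeed the right skeleton, and it matches the strategy of the cited proof. But the sketch has two concrete problems, and the second one is exactly the mathematical content of the theorem, which you defer as ``the delicate point'' rather than resolve.

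First, the application of symmetry is stated backwards. You propose to apply symmetry to $\nfs{M}{a_k}{M_k}{M_{k+1}}$ together with the basic type of $a_{k+1}$ over $M$; but the symmetry axiom requires the second tuple to lie \emph{in the middle model}, and $a_{k+1} \notin M_k$. The correct instance is $\nfs{M}{a_{k+1}}{M_{k+1}}{M_{k+2}}$ with $\ba_2 = a_k \in M_{k+1}$, which outputs a model $M^* \ni a_{k+1}$ and $N' \gea M_{k+2}$ with $\nfs{M}{a_k}{M^*}{N'}$. Second, and more seriously: this $M^*$ need not contain $M_k$, so to build the swapped chain you must pass to some $M_{k+1}'$ containing $M_k \cup M^*$. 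The condition $\nfs{M}{a_{k+1}}{M_k}{M_{k+1}'}$ survives (monotonicity lets you shrink the middle model of the original triple down to $M_k$), but the condition $\nfs{M}{a_k}{M_{k+1}'}{N'}$ does \emph{not} follow from $\nfs{M}{a_k}{M^*}{N'}$, because monotonicity only permits shrinking, never enlarging, the middle model. Likewise the tail $\seq{a_i : i > k+1}$ is not ``unaffected'': its witnessing models no longer sit above the enlarged $M_{k+1}'$, so it must be re-realized over the new chain. Closing these holes requires genuine use of extension, uniqueness, and invariance (re-realize the nonforking extension over the enlarged model and transport back), or the strengthened symmetry statement that Jarden--Sitton actually use (their Proposition 2.6, proved as Proposition 3.1.8 of \cite{jrsh875}) --- the same ingredient this paper flags as nontrivial in its discussion before Theorem \ref{concat}. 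Until that step is supplied, the proof is incomplete.
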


Implicit in this notion is a notion of independence being \emph{finitely witnessed} \cite[Definition 3.4]{jasi} which says that a set $I$ is independent if and only if all its finite subsets are. We give a more general parametrized definition here:

\begin{defin}\label{strong-continuity}
  Let $\s$ be a pre-$\F$-frame and $\mu \le \theta_\s$ be a cardinal. We say that \emph{$\mu$-independence in $\s$ is finitely witnessed} if for any $M_0 \lea M \lea N$ in $K_{\F}$ and any $I \subseteq N$ with $|I| < \mu$, $I$ is independent from $M$ over $M_0$ in $N$ if and only if all its finite subsets are independent from $M$ over $M_0$ in $N$.
  
  If $\mu = \theta_\s$, we omit it.
\end{defin}
\begin{remark}
  In \cite[Theorem 9.3]{jasi} shows that independence is finitely witnessed in a good $\lambda$-frame assuming the conjugation property, categoricity in $\lambda$, and density of uniqueness triples. Earlier, Shelah had proven the same result under stronger hypotheses \cite[Theorem III.5.4]{shelahaecbook}. 
\end{remark}
\begin{remark}\label{jasi-rmk}
  It is straightforward to see that if independence is finitely witnessed and the finite unordered continuity property holds, then the unordered continuity property holds. Recall from the discussion above that the finite unordered continuity property holds in any $\goodms{St}$-frame.
\end{remark}

Our next goal is to show that if $\s^{<\mu}$ has symmetry then $\mu$-independence is finitely witnessed (Theorem \ref{sym-strong-cont}). Together with Lemma \ref{sym-elongation} deducing symmetry from the frame being sufficiently global, this will show (Corollary \ref{cor-summary}) that tameness implies independence is finitely witnessed.

\subsection{Symmetry implies being finitely witnessed}

First we show that symmetry is equivalent to showing that the order of enumeration does not matter. The finite case is essentially Fact \ref{good-frame-permut}. To state the infinite case precisely, we introduce new terminology:

\begin{defin}\label{sym-indep}
  Let $\s$ be a pre-$\F$-frame and $\mu \le \theta_\s$ be a cardinal. We say that $\s$ has \emph{$\mu$-symmetry of independence} if for any $M_0 \lea M \lea N$ in $K_{\F}$ and any $I \subseteq N$ with $|I| < \mu$, $I$ is independent from $M$ over $M_0$ in $N$ if and only if \emph{every} enumeration of $I$ is independent from $M$ over $M_0$ in $N$.

  If $\mu = \theta_\s$, we omit it.
\end{defin}

Thus a restatement of Fact \ref{good-frame-permut} is that any $\goodms{St}$ frame has $\aleph_0$-symmetry of independence. The next theorem says that $\mu$-symmetry of independence is equivalent to $\s^{<\mu}$ having symmetry.

\begin{thm}\label{sym-equiv}
  Let $\s$ be a $\goodm$ $\F$-frame and let $\mu \le \theta_\s$ be a cardinal. The following are equivalent:

  \begin{enumerate}
    \item $\s^{<\mu}$ has symmetry.
    \item For any $M_0 \lea M \lea N$ in $K_{\F}$ and $\ba \bb \in N$ such that $\ell (\ba \bb) < \mu$, $\ba \bb$ is independent from $M$ over $M_0$ in $N$ if and only if $\bb \ba$ is independent in from $M$ over $M_0$ in $N$.
    \item $\s$ has $\mu$-symmetry of independence.
  \end{enumerate}
\end{thm}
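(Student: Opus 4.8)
The plan is to prove the cycle of implications $(1) \Rightarrow (2) \Rightarrow (3) \Rightarrow (1)$, leveraging the already-established frame properties of $\s^{<\mu}$ (namely that it is a $\goodms{S}$ $(<\mu, \F)$-frame by Corollary \ref{tuple-transfer}, so it has invariance, monotonicity, existence, uniqueness, local character, continuity, and transitivity). The implication $(3) \Rightarrow (2)$ is essentially immediate from unwinding definitions: $\mu$-symmetry of independence says that for a set $I$ of size $<\mu$, independence of some enumeration is equivalent to independence of every enumeration, and since $\ba\bb$ and $\bb\ba$ are just two enumerations of the underlying set, $(2)$ follows at once. Conversely $(2) \Rightarrow (3)$ requires passing from the swap of two adjacent blocks to an arbitrary permutation. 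Here I would argue that the transposition-type statement in $(2)$ generates all permutations, but infinite permutations are subtle; the clean route is instead to use $(2) \Rightarrow (1)$ and $(1) \Rightarrow (3)$, closing the cycle rather than proving $(2) \Leftrightarrow (3)$ directly.

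First I would prove $(2) \Rightarrow (1)$. Symmetry for $\s^{<\mu}$ states: if $\ba_1$ is independent from $M_2$ over $M_0$ in $N$, with $\ba_2 \in M_2$ such that $\tp(\ba_2/M_0;N) \in \Sabs{<\mu}(M_0)$, then there is $M_1$ containing $\ba_1$ and $N' \gea N$ with $\ba_2$ independent from $M_1$ over $M_0$ in $N'$. In the language of independent sequences, ``$\ba_1$ is independent from $M_2$ over $M_0$'' combined with ``$\ba_2$ is a basic sequence independent over $M_0$ inside $M_2$'' should, via Concatenation (Theorem \ref{concat}), let me build a single independent sequence extending $\ba_2$ by $\ba_1$; the hypothesis $(2)$ lets me reverse the order to get $\ba_1$ followed by $\ba_2$, and I then read off the witnessing model $M_1$ and extension $N'$ from the witnessing sequence. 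The technical content is bookkeeping with the witnessing towers: I would choose a witnessing sequence for $\ba_2 \vDash \tp(\ba_2/M_0;N)$ sitting inside $M_2$, concatenate with the sequence witnessing independence of $\ba_1$ from $M_2$, apply $(2)$ to swap the two blocks, and extract $M_1 := $ the stage of the reversed tower after $\ba_1$.

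For $(1) \Rightarrow (3)$, I would show symmetry of $\s^{<\mu}$ forces every enumeration of an independent set $I$ (with $|I| < \mu$) to be independent. The natural strategy is induction on $|I|$ using that any two enumerations differ by a permutation, reducing to moving a single element past an initial segment, which is exactly one application of symmetry for $\s^{<\mu}$ (i.e.\ an instance of $(1)$, or equivalently the block-swap $(2)$ which we already have from $(1)$ via $(1)\Rightarrow(2)$). The main obstacle, and the step I expect to be genuinely delicate, is the infinite case: when $I$ is infinite, a permutation need not be a finite product of transpositions, so I cannot simply iterate a swap finitely many times. To handle this I would appeal to the continuity property of $\s^{<\mu}$ (Corollary \ref{uniq-cont}) together with the finite permutation invariance (Fact \ref{good-frame-permut} at the level $\aleph_0$, lifted through the elongation): independence of an arbitrary enumeration is checked by approximating the reordering on increasing initial segments, using continuity to pass independence through the limit. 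Concretely, I would write the target enumeration as an increasing continuous union of initial segments, verify independence on each segment by a finite-to-continuous argument, and invoke continuity of $\s^{<\mu}$ to conclude independence of the whole. The implication $(1) \Rightarrow (2)$ that I use along the way is the special case of $(3)$ for two blocks and follows formally once $(1) \Rightarrow (3)$ is in hand, or can be proved directly by a single application of symmetry; either way it completes the cycle.
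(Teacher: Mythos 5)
Your proposal is correct and follows essentially the same route as the paper: the equivalence of (1) and (2) via concatenation (Theorem \ref{concat}) plus a single application of symmetry and reading off the witnessing tower, and the passage to arbitrary enumerations by transfinite induction using Fact \ref{good-frame-permut} for finite order types, a block swap at successor steps, and continuity of $\s^{<\mu}$ at limits. The one small correction is that the induction must be on the \emph{order type} of the enumeration rather than on $|I|$ (an infinite set admits enumerations of many order types), which is exactly how the paper organizes it via the statements $(*)_\alpha$.
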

\begin{proof}
  We first show (1) is equivalent to (2). Assume $\s^{<\mu}$ has symmetry, and let $M_0 \lea M \lea N$ in $K_{\F}$ and $\ba \bb \in N$ be such that $\ell (\ba \bb)  < \mu$ and $\ba \bb$ is independent from $M$ over $M_0$ in $N$. Then there exists $\seq{M^i : i \le \ell(\ba\bb)}$ and $N^+ \gea N$ witnessing it. Say $\alpha := \ell (\ba)$. Then $\ba \in M^\alpha$, $tp(\ba/M; M^\alpha) \in \Sabs{\alpha} (M_0)$, and $\bb$ is independent from $M^\alpha$ over $M$ in $N^+$, i.e.\ $\nfs{M}{\bb}{M^\alpha}{N^+}$. By Symmetry, there must exist a model $M'$ containing $\bb$ and $N^{++} \gea N^+$ such that $\nfs{M}{\ba}{M'}{N^{++}}$. By Monotonicity, $\nfs{M_0}{\ba}{M}{N^{++}}$, so by Transitivity, $\nfs{M_0}{\ba}{M'}{N^{++}}$. By Monotonicity, $\nfs{M_0}{\bb}{M}{M'}$. By concatenation (Theorem \ref{concat}), $\nfs{M_0}{\bb \ba}{M}{N^{++}}$ and so by Monotonicity, $\nfs{M_0}{\bb \ba}{M}{N}$, as needed. Conversely, assume (2). Assume $\nfs{M_0}{\ba_1}{M_2}{N}$ with $\ba_1 \in \fct{<\mu}{N}$, and $\ba_2 \in \fct{<\mu}{M_2}$ is such that $\tp (\ba_2 / M_0; N) \in \Sabs{<\mu} (M_0)$. By existence, $\nfs{M_0}{\ba_2}{M_0}{M_2}$. By concatenation, $\nfs{M_0}{\ba_1 \ba_2}{M_0}{N}$. By (2), $\nfs{M_0}{\ba_2 \ba_1}{M_0}{N}$. By definition of independent, there exists $M_1$ containing $\ba_1$ and $N' \gea N$ such that $\nfs{M_0}{\ba_2}{M_1}{N'}$, as needed.

  Next, we show that (2) is equivalent to (3). It is clear that (3) implies (2), so we assume (2) and we prove (3) as follows: we prove the following by induction on $\alpha < \mu$:
  
\begin{enumerate}
\item[$(*)_\alpha$] Let $M_0 \lea M \lea N$ be in $K_\mathcal{F}$ and let $I \subseteq |N|$ have size less than $\mu$.  If $I$ is independent from $M$ over $M_0$ in $N$, then \emph{every} enumeration of $I$ \emph{of order type $\alpha$} is independent from $M$ over $M_0$ in $N$.
\end{enumerate}
 
So let $\alpha < \mu$ and assume $(*)_\beta$ holds for all $\beta < \alpha$. Suppose $I$ as above is independent from $M$ over $M_0$ in $N$ and let $\seq{a_i : i < \alpha}$ be an enumeration of $I$ of type $\alpha$.

 First, suppose $\alpha$ is finite. Then $I$ is finite so Fact \ref{good-frame-permut} gives the result.
 
 Second, suppose $\alpha = \beta+1$ is an infinite successor. Then $\seq{a_\beta} ^\frown \seq{a_i : i < \beta}$ has order type $\beta$ and so (by $(*)_\beta$) is independent from $M$ over $M_0$ in $N$. Since (2) implies (1), the original sequence must also be independent.
 
 Finally, suppose that $\alpha$ is limit.  By monotonicity, every subset of $I$ is independent from $M$ over $M_0$ in $N$. In particular, for each $\beta < \alpha$ $\{a_i : i < \beta\}$ is independent from $M$ over $M_0$ in $N$, and so by $(*)_\beta $ $\seq{a_i : i < \beta}$ is also independent from $M$ over $M_0$ in $N$.  Thus by continuity (Corollary \ref{uniq-cont}) $\seq{a_i : i < \alpha}$ is independent from $M$ over $M_0$ in $N$.
  
\end{proof}

As a corollary, we manage to solve Exercise III.9.4.1 in \cite{shelahaecbook}:

\begin{cor}
  Let $\s$ be a good [$\goodms{St}$] $\F$-frame. Then $\s^{<\omega}$ is a good [$\goodms{St}$] $\F$-frame.
\end{cor}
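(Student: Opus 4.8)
The plan is to assemble the results of this section with the symmetry analysis of Theorem \ref{sym-equiv}; the corollary is essentially a repackaging of work already done in the previous sections, with the only genuine content being that symmetry is automatic for finite tuples.

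First I would observe that $\s^{<\omega}$ is nothing but the restriction of the full elongation $\s^{<\theta_\s}$ to types of length $<\omega$, i.e.\ $\s^{<\omega} = (\s^{<\theta_\s})^{<\omega}$ (which makes sense as $\omega \le \theta_\s$, since $\theta_\s > \lambda_\s \ge \LS(K)$). Now $\s$ is in particular a $\goodm$ frame, whether we start from a good frame or a $\goodms{St}$ frame, as both hypotheses are stronger. Hence by Corollary \ref{tuple-transfer} the elongation $\s^{<\theta_\s}$ is a $\goodm$ $(<\theta_\s, \F)$-frame, and applying the monotonicity remark for $\goodm$ frames with $\beta = \omega$ shows that $\s^{<\omega}$ is a $\goodm$ $(<\omega, \F)$-frame. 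Thus all of the good-frame axioms except possibly symmetry and bs-stability already hold for $\s^{<\omega}$.

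Next I would dispose of bs-stability. As noted just after Corollary \ref{tuple-transfer}, this property quantifies only over basic $1$-types, and the basic $1$-types of $\s^{<\omega}$ coincide with those of $\s$ (by the remark following Definition \ref{indep-seq-def} that a singleton is independent exactly when its type does not fork over its domain). Hence bs-stability transfers verbatim: in the good case, where $\s$ satisfies it, so does $\s^{<\omega}$, while in the $\goodms{St}$ case there is nothing to check.

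The only substantive step is symmetry, which I would obtain from Theorem \ref{sym-equiv} applied with $\mu = \omega$. That theorem reduces ``$\s^{<\omega}$ has symmetry'' to ``$\s$ has $\omega$-symmetry of independence''. Since $\s$ is $\goodms{St}$, Fact \ref{good-frame-permut} applies and, as remarked immediately after Definition \ref{sym-indep}, yields precisely that $\s$ has $\aleph_0$-symmetry of independence, that is, $\omega$-symmetry of independence. Feeding this back through the equivalence in Theorem \ref{sym-equiv} gives symmetry for $\s^{<\omega}$. Combining the three steps, $\s^{<\omega}$ is a good [$\goodms{St}$] $(<\omega, \F)$-frame, as desired. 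I expect no real obstacle: the genuine content (permutation-invariance of finite independent tuples and the symmetry equivalence) has already been established, and the corollary merely records that restricting to finite tuples stays within the good-frame world because $\omega$-symmetry of independence holds for free on finite sets.
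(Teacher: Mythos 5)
Your proposal is correct and follows essentially the same route as the paper: Corollary \ref{tuple-transfer} (plus the monotonicity remark) for the $\goodm$ part, Fact \ref{good-frame-permut} to get $\aleph_0$-symmetry of independence, Theorem \ref{sym-equiv} to convert that into symmetry for $\s^{<\omega}$, and the observation that bs-stability only concerns basic $1$-types, which are unchanged. No gaps.
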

\begin{proof}
  By Corollary \ref{tuple-transfer}, $\s^{<\omega}$ is a $\goodm$ $\F$-frame. By Fact \ref{good-frame-permut}, $\s$ has $\aleph_0$-symmetry of independence. By Theorem \ref{sym-equiv}, $\s^{<\omega}$ has symmetry, as needed.  Since bs-stability only refers to basic 1-types, $\s$ satisfies it if and only if $\s^{<\omega}$ does.
\end{proof}

Unfortunately, we do not know whether in general $\omega$ above can be replaced by a larger ordinal. To give a criteria on when this is possible, we show that independence being finitely witnessed (see Definition \ref{strong-continuity}) follows from symmetry.

\begin{thm}\label{sym-strong-cont}
  Let $\s$ be a $\goodms{St}$ $\F$-frame and let $\mu \le \theta_\s$ be a cardinal. If $\s^{<\mu}$ has symmetry, then $\mu$-independence in $\s$ is finitely witnessed.
\end{thm}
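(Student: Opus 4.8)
The goal is to show that if $\s^{<\mu}$ has symmetry, then any $I \subseteq N$ with $|I| < \mu$ is independent from $M$ over $M_0$ in $N$ whenever all its finite subsets are. One direction (independence implies finite independence) is just monotonicity, so the content is the converse. The plan is to fix an enumeration $\seq{a_i : i < \alpha}$ of $I$ where $\alpha = \text{otp}(I) < \mu$, and prove by induction on $\alpha$ that if every finite subset of $I$ is independent from $M$ over $M_0$ in $N$, then $\seq{a_i : i < \alpha}$ itself is independent from $M$ over $M_0$ in $N$. Having symmetry of $\s^{<\mu}$ available means, via Theorem \ref{sym-equiv}, that I also have $\mu$-symmetry of independence at my disposal, so I am free to reorder initial segments as needed.

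For the induction, the base and successor cases should be routine. When $\alpha$ is finite the hypothesis is immediate. For a successor $\alpha = \beta + 1$, I would like to say that $\seq{a_i : i < \beta}$ is independent by the induction hypothesis (each finite subset of $\{a_i : i < \beta\}$ is a finite subset of $I$), and then splice in the last point $a_\beta$; here I expect to use that every finite subset of $I$ containing $a_\beta$ is independent, together with symmetry/concatenation (Theorem \ref{concat}) to attach $a_\beta$ to the already-independent initial segment. The limit case is where continuity (Corollary \ref{uniq-cont}) does the work: by the induction hypothesis each proper initial segment $\seq{a_i : i < \beta}$ is independent from $M$ over $M_0$ in $N$ (its finite subsets are finite subsets of $I$), and these form a coherent increasing chain of basic types, so continuity produces independence of the full sequence $\seq{a_i : i < \alpha}$.

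The main obstacle I anticipate is the successor step, specifically justifying that one may adjoin the top element $a_\beta$ to the independent sequence $\seq{a_i : i < \beta}$ using only \emph{finite} independence together with symmetry. The natural route is: first use $\mu$-symmetry of independence to move $a_\beta$ to the front, reducing to showing $\seq{a_\beta}^\frown \seq{a_i : i < \beta}$ is independent; then observe that $\seq{a_\beta}$ (a single basic type) is independent, that $\seq{a_i : i < \beta}$ is independent by induction, and that I must glue these via concatenation. The subtlety is that concatenation (Theorem \ref{concat}) requires the second sequence to be independent over an \emph{enlarged} base containing the first element, which is exactly a nonforking-extension statement that I should be able to extract from the finite-independence hypothesis applied to sets of the form $\{a_\beta, a_{i_1}, \dots, a_{i_n}\}$, combined with continuity to pass from finite witnesses to the full segment. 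I would need to be careful that the model witnessing independence of $\seq{a_i : i < \beta}$ can be chosen to sit correctly relative to $a_\beta$, and this bookkeeping — matching up the base models so that the hypotheses of \ref{concat} and of symmetry line up — is the delicate part of the argument.
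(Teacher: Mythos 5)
Your overall strategy (induction, $\mu$-symmetry of independence to reorder, continuity at limits) is the right one, and your limit case is essentially the paper's entire argument. The genuine gap is the infinite successor step. To adjoin $a_\beta$ via concatenation (Theorem \ref{concat}) you must show that $\seq{a_i : i < \beta}$ is independent over $M_0$ from a \emph{single} model $M'$ containing $M$ and $a_\beta$. The finite-independence hypothesis, via Fact \ref{good-frame-permut}, only gives that each finite $F \subseteq \{a_i : i < \beta\}$ is independent over $M_0$ from \emph{some} model $M'_F \gea M$ containing $a_\beta$, and that model depends on $F$; there is no canonical choice of $M'$ to which all of these can be transferred (this is precisely the ``no prime model over $Ma_\beta$'' obstruction discussed in the introduction, and the reason nonforking for sequences is defined with existentially quantified witnessing chains in the first place). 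Moreover, even granting a common $M'$, your proposed passage ``from finite witnesses to the full segment'' over that enlarged base is itself an instance of the finitely-witnessed property you are proving, for a set of the same cardinality as $I$, and it is not covered by your induction hypothesis, which is indexed by order type with the base fixed. So the successor step as sketched is circular.

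The fix is to make the infinite successor step disappear: induct on the \emph{cardinality} $\mu_0 = |I|$ rather than on the order type of an enumeration, and enumerate $I$ in order type $\mu_0$. When $\mu_0$ is finite, $I$ is a finite subset of itself and the hypothesis gives independence outright. When $\mu_0$ is infinite it is a limit ordinal, so every proper initial segment $\{a_j : j < i\}$ has cardinality $|i| < \mu_0$ and all of its finite subsets are finite subsets of $I$; the induction hypothesis makes it independent as a set, $\mu$-symmetry of independence (Theorem \ref{sym-equiv}) upgrades this to independence of the \emph{ordered} initial segment $\seq{a_j : j < i}$, and continuity of $\s^{<\mu}$ (Corollary \ref{uniq-cont}) finishes. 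This is the paper's proof; it is exactly your limit case, with the enumeration chosen so that only the limit case ever occurs.
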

\begin{proof}
  By Theorem \ref{sym-equiv} $\s$ has $\mu$-symmetry of independence, and by Corollary \ref{uniq-cont} $\s^{<\mu}$ has continuity. Let $M_0 \lea M \lea N$ be in $K_{\F}$ and let $I \subseteq N$ be such that $|I| < \mu$. Assume that every finite subset of $I$ is independent in from $M$ over $M_0$ in $N$. Assume inductively that $\mu_0$-independence is finitely witnessed for all $\mu_0 < \mu$. Let $\mu_0 := |I|$ and write $\{a_i : i < \mu_0\}$. Let $I_i := \{a_i : j < i\}$. By the induction hypothesis, $I_i$ is independent from $M$ over $M_0$ in $N$ for all $i < \mu_0$. By $\mu$-symmetry of independence, the ordered sequence $\seq{a_j : j < i}$ is independent from $M$ over $M_0$ in $N$. By continuity of $\s^{<\mu}$, $\seq{a_i : i < \mu_0}$ is independent from $M$ over $M_0$ in $N$. Thus $I$ is independent from $M$ over $M_0$ in $N$, as desired.
\end{proof}
\begin{remark}
  A similar proof shows that the \emph{ordered} version of $\mu$-independence being finitely witnessed (that is, a sequence is independent if and only if all of its finite subsequences are) is \emph{equivalent} to symmetry in $\s^{<\mu}$.
\end{remark}

Next, we show symmetry indeed holds in the elongation if the original frame is ``sufficiently global'' (this does not even use that $\s$ has symmetry):

\begin{lem}\label{sym-elongation}
  Assume $\s$ is a $\goodm$ $\F$-frame and $\F = [\lambda, \theta)$. If $\theta \ge \beth_{\left(2^{\lambda}\right)^+}$, then $\s_\lambda^{<\lambda^+}$ has symmetry.
\end{lem}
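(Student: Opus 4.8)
The plan is to reduce symmetry of $\s_\lambda^{<\lambda^+}$ to a statement about independence inside $K_\lambda$ and then to prove that statement by extracting indiscernibles from very long sequences, which is where the hypothesis $\theta \ge \beth_{(2^\lambda)^+}$ does its work. First I would invoke Theorem \ref{sym-equiv}, applied to the $\goodm$ $\lambda$-frame $\s_\lambda$ (which has $\theta_{\s_\lambda} = \lambda^+$) with $\mu = \lambda^+$: it is enough to show that $\s_\lambda$ has $\lambda^+$-symmetry of independence, i.e.\ that for all $M_0 \lea M \lea N$ in $K_\lambda$ and all $I \subseteq |N|$ with $|I| \le \lambda$, if \emph{some} enumeration of $I$ is independent from $M$ over $M_0$ in $N$ then \emph{every} enumeration is. By Fact \ref{good-frame-permut} this is already known when $I$ is finite, so the whole difficulty lies in infinite $I$ of size at most $\lambda$; equivalently (cf.\ the remark after Theorem \ref{sym-strong-cont}) one must upgrade finite symmetry to a finite-character statement for sequences of length up to $\lambda$.

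The engine for this upgrade is a Ramsey argument carried out in the large models that the hypothesis on $\theta$ makes available. Suppose toward a contradiction that some independent sequence $\seq{a_i : i < \alpha}$ (with $\alpha \le \lambda$) over $M_0$ in $K_\lambda$ admits a non-independent reordering, and set $p := \tp(\seq{a_i : i < \alpha}/M_0; N)$; by monotonicity $p \in \Sabs{<\lambda^+}(M_0)$. Using existence for $\s^{<\theta}$ (Theorem \ref{lc-ext-transfer} and Corollary \ref{tuple-transfer}), I would build a $\s^{<\theta}$-independent sequence $\seq{\bc_\eta : \eta < \chi}$ of realizations of $p$ over $M_0$, with $\chi := \beth_{(2^\lambda)^+}$. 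This is legitimate precisely because $\chi < \theta$, so every model occurring in the construction has size $< \theta$ and hence lies in $K_\F$; this is the one and only place the largeness of $\theta$ is used.

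Next I would color the finite increasing tuples of $\seq{\bc_\eta : \eta < \chi}$ by their Galois type over $M_0$. Since $M_0$ has size $\lambda$ and every tuple has length $\le \lambda$, there are at most $2^\lambda$ colors, so the Erdős--Rado relation $\beth_{(2^\lambda)^+} \to (\lambda^+)^{<\omega}_{2^\lambda}$ yields an index set $J$ of order type $\lambda^+$ on which the sequence is order-indiscernible over $M_0$ and (being a subsequence of an independent sequence) still independent. On this Morley sequence, uniqueness and continuity of $\s^{<\theta}$ (Corollary \ref{uniq-cont}) pin down the type of every increasing tuple, while Fact \ref{good-frame-permut} handles finite reorderings; the goal is to parlay these into the conclusion that the order of enumeration is irrelevant for the whole length-$\lambda^+$ sequence, contradicting the non-independent reordering after transporting back to $\ba\bb$ via the type equality $\tp(\ba\bb/M_0) = \tp(\bc_i\,\cdots/M_0)$ guaranteed by uniqueness.

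The main obstacle I anticipate is exactly the last step: promoting \emph{order}-indiscernibility to \emph{total} (unordered) indiscernibility, equivalently promoting the free finite symmetry of Fact \ref{good-frame-permut} to symmetry for sequences of length up to $\lambda$. This is a genuine finite-character/locality problem for nonforking of long types that fails for arbitrary frames, and the role of $\theta \ge \beth_{(2^\lambda)^+}$ is precisely to supply, through Erdős--Rado, enough indiscernibles that uniqueness and continuity can close the gap without any locality assumption on $K$. This should be contrasted with part (2) of Theorem \ref{main-thm}, where the same gap is instead closed by $\lambda$-tameness; here largeness of the interval replaces tameness.
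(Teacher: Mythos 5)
There is a genuine gap, and you locate it yourself in your last paragraph: the step of ``promoting order-indiscernibility to total indiscernibility'' --- equivalently, upgrading the finite symmetry of Fact \ref{good-frame-permut} to symmetry for reorderings of infinite sequences --- \emph{is} the content of the lemma, and your proposal never carries it out. Your reduction via Theorem \ref{sym-equiv} to $\lambda^+$-symmetry of independence for $\s_\lambda$ is fine, and you correctly guess that $\theta \ge \beth_{(2^\lambda)^+}$ enters through an Erd\H{o}s--Rado-style argument in large models. But the machinery you set up (a long independent sequence of realizations of $p$, extraction of an order-indiscernible subsequence, then ``uniqueness and continuity close the gap'') does not obviously connect back to the hypothesized non-independent reordering of $\seq{a_i : i < \alpha}$: order-indiscernibility over $M_0$ of a Morley sequence says nothing by itself about whether a \emph{permuted} enumeration of the original $I$ is witnessed as independent, and no mechanism is given for deriving a contradiction. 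As written, the argument stops exactly where the work begins.

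The paper's proof takes the contrapositive and runs a stability count instead. From uniqueness and local character one shows $K_{\F}$ is stable in $2^\lambda$ for $1$-types (as in \cite[Proposition 6.4]{ss-tame-toappear-v3}), and Fact \ref{stab-longtypes} (using $(2^\lambda)^\lambda = 2^\lambda$) upgrades this to stability in $2^\lambda$ for $\lambda$-types. If $\s_\lambda^{<\lambda^+}$ fails symmetry, the argument of \cite[Theorem 5.14]{bgkv-v2} produces an order property for tuples of length $\le \lambda$; since $\theta \ge \beth_{(2^\lambda)^+}$, the witnessing sequences can be built inside $K_{\F}$ up to length $\beth_{(2^\lambda)^+}$, and an order property of that length yields instability in $2^\lambda$ for $\lambda$-types (\cite[Section 4]{sh394}), a contradiction. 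So the largeness of $\theta$ is spent on ``order property implies instability,'' not on extracting indiscernibles from an independent sequence. If you want to salvage your direct approach, you would need to supply the missing bridge; otherwise the contrapositive route is the one that actually closes.
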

\begin{proof}
Using uniqueness and local character, it is straightforward to see that $K_{\F}$ is stable in $2^\lambda$ (for $1$-types), see e.g.\ \cite[Proposition 6.4]{ss-tame-jsl}. By Fact \ref{stab-longtypes} this means $K_{\F}$ is stable in $2^\lambda$ for $\lambda$-types. Then the same nonstructure proof as \cite[Corollary 6.11]{ss-tame-jsl} generalizes: if $\s$ does not have symmetry, then the same proof as \cite[Theorem 5.14]{bgkv-apal} shows that $K_{\F}$ has an order property, and this order property is enough to deduce instability in $2^\lambda$ for $\lambda$-types (see \cite[Section 4]{sh394} or \cite[Fact 5.13]{bgkv-apal} for a sketch).
\end{proof}

Note, by uniqueness and local character, if $\chi := \mathfrak{tb}^1_\lambda := \sup_{M \in K_\lambda} |\Ss(M)|$, and $\s$ is a $\goodm$ $[\lambda, \chi]$-frame, then $\s_\chi$ will satisfy bs-stability (and hence be a $\goodms{S}$-frame); see \cite[Proposition 6.4]{ss-tame-jsl}.

We now apply the lemma to the maximal elongation of a $(\ge \lambda)$-frame $\s$, namely $\s^{<\infty} := \cup_{\alpha \in \textbf{ON}} \s^{<\alpha}$.

\begin{cor}\label{sym-elong-infty}
  Assume $\s$ is a $\goodm$ $(\ge \lambda)$-frame. Then $\s^{<\infty}$ has symmetry.
\end{cor}
\begin{proof}
  Use Lemma \ref{sym-elongation} with each $\lambda' \in [\lambda, \infty)$.
\end{proof}
\begin{cor}\label{infty-good-frame}
  Assume $\s$ is a $\goodms{S}$ [$\goodm$] $(\ge \lambda)$-frame. Then $\s^{<\infty}$ is a good [$\goodms{St}$] $(<\infty, \ge \lambda)$-frame.
\end{cor}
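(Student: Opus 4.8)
The plan is to assemble this corollary directly from the transfer results of Sections \ref{indep-seq-good} and \ref{sym-long-frames}, since all the substantive work has already been done. For a $(\ge \lambda)$-frame we have $\theta_\s = \infty$, so $\s^{<\infty}$ coincides with $\s^{<\theta_\s}$ in the notation of Corollary \ref{tuple-transfer}, and the first step is simply to invoke that corollary: $\s^{<\infty}$ is a $\goodm$ $(<\infty, \ge \lambda)$-frame, i.e.\ it satisfies every axiom of a good frame except possibly symmetry and bs-stability.

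Next I would supply symmetry. Since $\s$ is in particular a $\goodm$ frame, Corollary \ref{sym-elong-infty} applies and gives that $\s^{<\infty}$ has symmetry. Combining this with the previous paragraph shows that $\s^{<\infty}$ is good except possibly for bs-stability, that is, $\s^{<\infty}$ is a $\goodms{St}$ $(<\infty, \ge \lambda)$-frame. This already establishes the bracketed version of the statement, where $\s$ is only assumed to be $\goodm$.

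For the unbracketed version it remains to transfer bs-stability. Here I would use the observation recorded just after Corollary \ref{tuple-transfer}: bs-stability is a statement about basic $1$-types only, and for length-one types, independence from $M'$ over $M$ reduces to ordinary nonforking (by the remark following Definition \ref{indep-seq-def}), so the basic $1$-types of $\s^{<\infty}$ are exactly those of $\s$. Hence if $\s$ is $\goodms{S}$, so that $\s$ itself satisfies bs-stability, then $\s^{<\infty}$ inherits it, and $\s^{<\infty}$ is good.

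There is no real obstacle remaining, since the two difficult ingredients---preservation of all the non-symmetry axioms under elongation, and the symmetry transfer via the global stability argument of Lemma \ref{sym-elongation}---are precisely Corollaries \ref{tuple-transfer} and \ref{sym-elong-infty}. The only point warranting a moment's care is confirming that the $\theta = \infty$ case is covered: the definition of $\s^{<\alpha}$ and the transfer results are all stated for $\alpha \le \theta$ with $\theta$ allowed to be $\infty$, so taking $\s^{<\infty} = \bigcup_{\alpha \in \textbf{ON}} \s^{<\alpha}$ introduces no difficulty.
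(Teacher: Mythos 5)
Your proof is correct and follows exactly the paper's route: combine Corollary \ref{tuple-transfer} with Corollary \ref{sym-elong-infty}, noting (as the paper does right after Corollary \ref{tuple-transfer}) that bs-stability transfers immediately since it concerns only basic $1$-types. Nothing to add.
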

\begin{proof}
  Combine Corollary \ref{tuple-transfer} and Corollary \ref{sym-elong-infty}.
\end{proof}

\section{Applications}\label{going-up}

This section gives some applications of these results.

\subsection{Dimension}

In \cite[Definition III.5.12]{shelahaecbook}, Shelah introduced a notion of dimension based on a frame.  In \cite[Conclusion III.5.14]{shelahaecbook}, he shows that this notion is well-behaved (in the sense of Corollary \ref{good-dimension}) from an assumption that is a little stronger than $\s$ being weakly successful and Jarden and Sitton \cite[Theorem 1.1]{jasi} reduce this assumption to just assuming the $\goodms{St}$ $\lambda$-frame has the unordered continuity property.  A corollary of our results on symmetry and independence being finitely witnessed is that we can remove any extra hypothesis.

\begin{cor}\label{good-dimension}
  Let $\s$ be a $\goodms{St}$ $\lambda$-frame and assume $\s^{<\lambda^+}$ has symmetry. Let $M \lea M_0 \lea N$ be in $K_\lambda$. If:

  \begin{enumerate}
    \item $P \subseteq \Sbs (M_0)$
    \item $I_1, I_2$ are each $\subseteq$-maximal sets in

      $$\{I : I \text{ is independent from }M_0 \text{ over } M \text{ in } N \text{ and } a \in I \Rightarrow \tp (a / M_0; N) \in P\}$$
    \item One of $I_1$, $I_2$ is infinite.
  \end{enumerate}

  Then $I_1$ and $I_2$ are both infinite and $|I_1| = |I_2|$.
\end{cor}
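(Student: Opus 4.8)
The plan is to transfer everything to the elongated frame $\ts := \s^{<\lambda^+}$ and run a weight-counting argument. By Corollary \ref{tuple-transfer} together with the hypothesis that $\s^{<\lambda^+}$ has symmetry, $\ts$ is a good $(<\lambda^+, \lambda)$-frame (except possibly for bs-stability), so the good-frame axioms — amalgamation, existence, uniqueness, symmetry, local character (Theorem \ref{lc-ext-transfer}), continuity (Corollary \ref{uniq-cont}), transitivity, and monotonicity — are all available for nonforking in $\ts$. Since $N \in K_\lambda$, every subset of $N$ has size $\le \lambda < \lambda^+$, so by Theorem \ref{sym-equiv} independence of such a set does not depend on its enumeration (symmetry of independence) and by Theorem \ref{sym-strong-cont} it is finitely witnessed. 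Using finite witnessing I first reformulate maximality: if $a \in N$ realizes a type in $P$ and $a \notin I_\ell$, then $I_\ell \cup \{a\}$ is dependent, so some finite $F \subseteq I_\ell$ already has $F \cup \{a\}$ dependent.

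The technical heart is a \emph{finite weight} lemma: for every finite $\bar c \in N$ whose entries realize basic types nonforking over $M$, and every $B$ independent from $M_0$ over $M$ in $N$, the set $B_{\bar c} := \{b \in B : \bar c ^\frown b \text{ is dependent}\}$ is finite. I would prove this by contradiction. Assuming $B_{\bar c}$ infinite, extract a countable independent subsequence $\seq{b_n : n < \omega}$ with a witnessing tower $\seq{N_n : n \le \omega}$ in $K_\lambda$ satisfying $\nfs{M}{b_n}{N_n}{N_{n+1}}$, and fix $N^+ \gea N_\omega := \bigcup_n N_n$ with $\bar c \in N^+$. Local character of $\ts$ (Theorem \ref{lc-ext-transfer}, applicable since $\ell(\bar c) < \omega = \cf{\omega}$) yields $n^* < \omega$ such that $\bar c$ is independent from $N_\omega$ over $N_{n^*}$. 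Fix $n \ge n^*$: monotonicity gives $\nfs{M}{b_n}{N_{n^*}}{N_{n+1}}$ (so $b_n$ is free over $M$ from $N_{n^*}$), while applying the symmetry axiom of $\ts$ to the independence of $\bar c$ from a cover of $b_n$ inside $N_{n+1}$ over $N_{n^*}$ shows $b_n$ is free over $N_{n^*}$ from a model containing $\bar c$. Transitivity then gives that $b_n$ is free over $M$ from a cover of $\bar c$, whence by monotonicity and symmetry of independence $\bar c ^\frown b_n$ is independent, contradicting $b_n \in B_{\bar c}$. This base-changing nonforking calculus is the step I expect to be the main obstacle; it works precisely because symmetry is available in $\ts$, which is exactly the hypothesis.

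Granting the lemma, the conclusion is a routine count that crucially exploits infiniteness. First, both sets are infinite: if $I_1$ is infinite but $I_2$ finite, applying the finite weight lemma with $\bar c$ an enumeration of $I_2$ and $B = I_1$ shows only finitely many $a \in I_1$ have $I_2 \cup \{a\}$ dependent, yet by maximality of $I_2$ every $a \in I_1 \setminus I_2$ (all but finitely many) does, a contradiction. With both infinite, for each $b \in I_2 \setminus I_1$ choose via maximality of $I_1$ a finite $F_b \subseteq I_1$ with $F_b \cup \{b\}$ dependent. For each fixed finite $F \subseteq I_1$ the fiber $\{b : F_b = F\}$ is contained in $\{b : F ^\frown b \text{ dependent}\}$, which is finite by the lemma; as there are $|I_1|$ finite subsets of $I_1$, this gives $|I_2 \setminus I_1| \le |I_1| \cdot \aleph_0 = |I_1|$, hence $|I_2| \le |I_1|$. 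The symmetric argument gives $|I_1| \le |I_2|$, and Cantor--Schröder--Bernstein yields $|I_1| = |I_2|$. The hypothesis that one set is infinite is exactly what makes $|[I_1]^{<\omega}| = |I_1|$ and the finite-union counting legitimate, so that no exchange property — and hence no control of the finite case — is needed.
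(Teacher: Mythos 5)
Your overall architecture (finitely witnessed independence to reduce maximality to finite subsets, a finite-weight lemma, then a counting argument) is reasonable, and the final counting paragraph is correct as stated. But there is a genuine gap at the step you yourself flagged as the heart of the matter: the application of local character of $\ts := \s^{<\lambda^+}$ to $\tp(\bar{c}/N_\omega; N^+)$. Local character in Definition \ref{good-frame-def} is only postulated for \emph{basic} types $p \in \Sbs(M_\delta)$, and for the elongated frame this means that $\bar{c}$ must be an \emph{independent sequence over $N_\omega$} (i.e.\ $\tp(\bar{c}/N_\omega;N^+) \in \Sabs{<\lambda^+}(N_\omega)$). Nothing in your setup provides this. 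The entries of $\bar{c}$ realize types in $P \subseteq \Sbs(M_0)$ and (in your applications) $\bar{c}$ is independent from $M_0$ over $M$, but the frame axioms give no upward transfer of basicness to larger domains: a pre-frame's monotonicity clause only passes basic types to subsequences, not to extensions of the base, and the frames here are not type-full. Worse, the situation you are in is precisely one where basicness over $N_\omega$ should be expected to fail, since by assumption $\bar{c}^\frown b_n$ is dependent for every $n$ and $N_\omega$ contains covers of all the $b_n$. Without this step you get no $n^*$, and the subsequent symmetry--transitivity--concatenation calculus (which is otherwise fine) never gets off the ground. This is not a presentational issue: the analogous first-order weight argument works because forking satisfies local character for \emph{all} types over models, and reproducing it inside a (non-type-full) good frame is exactly the delicate point.

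For comparison, the paper does not attempt a self-contained combinatorial proof at all: it verifies that symmetry of $\s^{<\lambda^+}$ yields finitely witnessed independence (Theorem \ref{sym-strong-cont}) and, via Remark \ref{jasi-rmk}, the unordered continuity property, and then quotes Jarden--Sitton's dimension theorem, whose proof is engineered to avoid applying local character to types that are not known to be basic. If you want to keep your direct route, the missing ingredient is a substitute for the finite-weight lemma that only ever invokes local character (or continuity) for types witnessed to be basic --- for instance by running the forking chain on the $b_n$ side, where $\nfs{M}{b_n}{N_n}{N_{n+1}}$ does guarantee basicness --- rather than on the $\bar{c}$ side.
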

\begin{proof}
Since Symmetry holds, independence in $\s$ is finitely witnessed by Theorem \ref{sym-strong-cont}. Recalling Remark \ref{jasi-rmk}, the hypotheses of \cite[Theorem 1.1]{jasi} hold, and the conclusion is this result.
\end{proof}

This dimension--defining $\dim(P, N)$ to be the single infinite size of a $I_1$ from Corollary \ref{good-dimension}--is used to develop the theory of regular types in \cite[Section III.10]{shelahaecbook}.  As it stands, there is no known example showing that symmetry is necessary to develop a dimension theory (or a theory of regular types). In fact, there is no known example of a $\goodm$-frame which fails to have symmetry (i.e.\ it is not known whether symmetry follows from the other axioms of a good frame, although we suspect it does not). However, the fact that this definition compares independent sets rather than sequences implicitly assumes the symmetry of independence (see Theorem \ref{sym-equiv}).

\subsection{Tameness and extending frames revisited}

Recall the definition of tameness from Definition \ref{tameness-def}.  The first author \cite{ext-frame-jml} first studied the connection between tameness and frames. As in \cite[Theorem 3.2]{ext-frame-jml}, having a frame that spans multiple cardinals already gives some tameness.

\begin{prop}\label{tameness-uniq}
  Assume $\s := (K, \nf, \Sbs)$ is a $\goodm$ $\F$-frame. Let $\F := [\lambda, \theta)$. 

    For each $\alpha < \theta$, $K$ is $(\lambda + |\alpha|, <\theta)$-tame for the basic types of $\s^{<\theta}$ of length $\le \alpha$.
\end{prop}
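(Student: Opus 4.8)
The plan is to run the standard argument showing that \emph{local character together with uniqueness yields tameness}, exactly as in \cite[Theorem 3.2]{ext-frame-jml}. All the substantive work has already been done in Section \ref{indep-seq-good}: by Corollary \ref{tuple-transfer} the elongation $\s^{<\theta}$ is a $\goodm$ $(<\theta, \F)$-frame, so in particular it is a pre-frame (hence has invariance and monotonicity), it has local character with the sharp bound from Theorem \ref{lc-ext-transfer}.(\ref{lc-transfer}), and it has uniqueness by Corollary \ref{uniq-cont}. The proposition is then essentially a formal consequence of these three facts.

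Concretely, I would argue as follows. Fix $\alpha < \theta$, a model $M \in K_{<\theta}$ (necessarily in $K_\F$, since the types considered are basic for $\s^{<\theta}$, so $\|M\| \geq \lambda$), and two basic types $p \neq q$ of $\s^{<\theta}$ over $M$ with $\ell(p), \ell(q) \leq \alpha$. If $\ell(p) \neq \ell(q)$, then $p \rest M_0$ and $q \rest M_0$ differ in length for \emph{every} $M_0 \lea M$, and any such $M_0$ of size $\leq \lambda + |\alpha|$ witnesses tameness; so I may assume $\ell(p) = \ell(q) = \beta \leq \alpha$. By the moreover clause of Theorem \ref{lc-ext-transfer}.(\ref{lc-transfer}) applied to $\s^{<\theta}$, there are $M_p, M_q \lea M$ in $K_{[\lambda, \lambda + |\beta|]}$ (note each contains a submodel in $K_\lambda$, so each is genuinely in $K_\F$) such that $p$ does not fork over $M_p$ and $q$ does not fork over $M_q$ with respect to $\s^{<\theta}$. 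Using the Löwenheim--Skolem axiom, choose $M_0 \lea M$ with $|M_p| \cup |M_q| \subseteq |M_0|$ and $\|M_0\| \leq |M_p \cup M_q| + \LS(K) \leq \lambda + |\alpha|$; since $M_p \lea M_0$ we have $\|M_0\| \geq \lambda$, so $M_0 \in K_\F$. By monotonicity of $\s^{<\theta}$ (enlarging the base from $M_p$, resp.\ $M_q$, to $M_0$, which is legitimate as $M_0 \lea M = \dom{p} = \dom{q}$), both $p$ and $q$ do not fork over $M_0$. Now if $p \rest M_0 = q \rest M_0$, then uniqueness for $\s^{<\theta}$ (Corollary \ref{uniq-cont}) forces $p = q$, contradicting $p \neq q$. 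Hence $p \rest M_0 \neq q \rest M_0$, and $M_0$ has size $\leq \lambda + |\alpha|$, which is exactly what $(\lambda + |\alpha|, <\theta)$-tameness for these types requires.

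I do not expect a genuine obstacle here, since the difficulty has been absorbed into the earlier results: the entire force of the statement comes from the sharp size bound $\lambda + |\alpha|$ in the local-character clause of Theorem \ref{lc-ext-transfer}, which guarantees that nonforking is witnessed over a \emph{small} model, together with uniqueness for the elongated frame. The only points demanding a little care are the bookkeeping ones flagged above: handling types of unequal length by a trivial restriction, verifying that the small base models produced by local character actually lie in $K_\F$ (so that forking over them is defined and monotonicity applies), and checking the cardinal arithmetic $|M_p \cup M_q| + \LS(K) \leq \lambda + |\alpha|$. None of these is hard, so the proof should be short.
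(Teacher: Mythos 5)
Your proof is correct and follows exactly the paper's own argument: apply the moreover clause of Theorem \ref{lc-ext-transfer}.(\ref{lc-transfer}) to find a common base $M_0 \lea M$ of size $\le \lambda + |\alpha|$ over which both types do not fork, then conclude by uniqueness for $\s^{<\theta}$. The paper compresses the step of combining $M_p$ and $M_q$ into a single $M_0$ (and omits the trivial unequal-length case), but the substance is identical.
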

\begin{proof}
  Let $\alpha < \theta$, and let $p, q \in \Sabs{\le \alpha} (M)$ be distinct. By the moreover part of Theorem \ref{lc-ext-transfer}.(\ref{lc-transfer}), one can find $M_0 \lea M$ in $K_{\le \lambda + |\alpha|}$ such that both $p$ and $q$ do not fork over $M_0$. By uniqueness, we must have $p \upharpoonright M_0 \neq q \upharpoonright M_0$, as needed.
\end{proof}

In \cite{ext-frame-jml}, the main concern was using $\lambda$-tameness to extend a $\lambda$-frame to a $(\geq \lambda)$-frame.  The definition of the extension and the preservation of several properties were already done by Shelah.

\begin{defin}[Going up, Definitions II.2.4 and II.2.5 of \cite{shelahaecbook} ] \label{going-up-def}
  Let $\s := (K, \nf, \Sbs)$ be a pre-$(<\alpha, \lambda)$-frame, and let $\F = [\lambda, \theta)$ be an interval of cardinals as usual. Define $\s_{\F} := (K, \nf_{\F}, \Sbs_{\F})$ as follows:

  \begin{itemize}

    \item For $M_0 \lea M_1 \lea N$ in $K_{\F}$ and $\ba \in \fct{<\alpha}{N}$, $\nf_{\F}(M_0, M_1, \ba, N)$ if and only if there exists $M_0' \lea M_0$ in $K_\lambda$ such that for all $M_0' \lea M_1' \lea N' \lea N$ with $\ba \in N'$, and $M_1'$, $N'$ in $K_\lambda$, we have $\nfs{M_0'}{\ba}{M_1'}{N'}$.
    \item For $M \in K_{\F}$ and $p \in \Ss^{<\alpha} (M)$, $p \in \Sbs_{\F} (M)$ if and only if there exists $N \gea M$ and $\ba \in N$ such that $p = \tp (\ba / M; N)$ and $\nf_{\F} (M, M, \ba, N)$.
  \end{itemize}
\end{defin}

\begin{fact} \label{basic-frame-fact}
  Let $\s$ be a $\goodm$ $\lambda$-frame, and let $\F = [\lambda, \theta)$ be an interval of cardinals as usual. Then $\s_{\F}$ satisfies all the properties of a good $\F$-frame except perhaps bs-stability, existence, uniqueness, and symmetry.
\end{fact}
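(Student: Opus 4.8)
The plan is to verify each axiom of a good $\F$-frame for $\s_\F$ by translating it, through the local-to-global definition of $\nf_\F$ in Definition \ref{going-up-def}, into the corresponding statement about $\s$ over its $\lambda$-sized approximations. The properties to establish are the pre-frame axioms (invariance, monotonicity, and nonforking types being basic), amalgamation, joint embedding and no maximal models in $K_\F$, density of basic types, local character, continuity, and transitivity. It is worth noting at the outset that the four \emph{excluded} properties---bs-stability, existence, uniqueness, and symmetry---are precisely the ones whose piecewise reduction breaks down without extra hypotheses, and that their absence shapes the argument: since $\s_\F$ need not have existence or uniqueness, the usual shortcut deriving transitivity from them (Remark \ref{trans-prop}) is closed to us, so transitivity must be argued directly.

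First I would dispatch the ``soft'' properties. Invariance, and the clause that $\nf_\F$-nonforking types over their own domain are basic, are immediate from the definitions of $\nf_\F$ and $\Sbs_\F$. Monotonicity follows by applying monotonicity of $\s$ to each $\lambda$-sized piece while keeping the same witnessing small base $M_0' \lea M_0$; the only direction that is not purely formal is enlarging the ambient model $N$ to some $N''$, which I would handle using the extension and uniqueness of $\s$ to see that local $\s$-nonforking over $M_0'$ depends only on the relevant type and so persists to the new small pieces of $N''$. For transitivity I would take the small bases witnessing that $p$ does not $\nf_\F$-fork over $M_1$ and that $p \rest M_1$ does not $\nf_\F$-fork over $M_0$, amalgamate them inside a single small base, and apply transitivity of $\s$ to each small piece of the top model. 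Local character and continuity are local in nature: resolving the relevant chain into $\lambda$-sized approximations and invoking, respectively, local character and continuity of $\s$---much as in the proofs of Theorem \ref{lc-ext-transfer} and Corollary \ref{uniq-cont}---produces the witnessing small base and the limit type. Amalgamation, joint embedding, and no maximal models in $K_\F$ I would obtain from the corresponding properties of $K_\lambda$ by the standard good-frame transfer, building the required amalgams and proper extensions as increasing unions of $\lambda$-sized ones by means of the extension property of $\s$, citing Shelah's \cite[Section II.2]{shelahaecbook}.

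The step I expect to be the main obstacle is density of basic types. The natural approach is to resolve $M = \bigcup_i M_i$ and $N = \bigcup_i N_i$ into coordinated increasing continuous chains of $\lambda$-sized models with $M_i \lea N_i$, to note that $M \lta N$ forces $M_i \lta N_i$ on a tail (fix $c \in N \setminus M$; then $c \in N_i \setminus M_i$ for all large $i$), and to apply density of $\s$ to produce a basic element at some level. The difficulty---and the reason this is more than bookkeeping---is that density for $\s_\F$ demands an element of the \emph{given} $N$ whose type over all of $M$ does not $\nf_\F$-fork over a single small base, whereas a basic element extracted at one level has, absent the extension property, no reason to remain nonforking over that base as the second coordinate grows toward $M$. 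Overcoming this calls for a careful choice of the element together with a reflection argument---through local character of $\s$ and the coherence of the resolution---guaranteeing that its global type over $M$ reflects down to a fixed $M_{i_0}$. This is exactly the point where real work is required, and it is the analogue for this construction of the effort the rest of the paper expends to recover uniqueness and existence; for the full verification I would refer to Shelah's \cite[Definitions II.2.4, II.2.5]{shelahaecbook} and the surrounding claims.
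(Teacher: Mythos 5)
The paper's own ``proof'' of this Fact is a bare citation to \cite[Section II.2]{shelahaecbook}, so there is no detailed argument to compare yours against; most of your sketch is a reasonable reconstruction of what that verification involves. In particular, you are right that transitivity must be argued directly (the shortcut of Remark \ref{trans-prop} is unavailable because existence and uniqueness are among the properties excluded for $\s_\F$), and you correctly isolate density of basic types as the step requiring real work: an element produced by density of $\s$ over one $\lambda$-sized approximation need not lie outside $M$, and even if it does, its type over larger $\lambda$-sized submodels of $M$ need not stay nonforking over a fixed small base. The repair is a tail-catching resolution (arranging $|N_i| \cap |M| \subseteq |M_{i+1}|$ so that at limits the small pair has the right trace) combined with a reflection via local character of $\s$, which is exactly what Shelah supplies; deferring to \cite{shelahaecbook} for that is consistent with what the paper itself does.

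The one place where your sketch asserts something that is not routine is clause (1) of Definition \ref{good-frame-def}: amalgamation, joint embedding, and no maximal models for $K_\F$. Amalgamation does not in general pass from $K_\lambda$ to $K_{[\lambda,\theta)}$ by ``building the required amalgams as increasing unions of $\lambda$-sized ones'': in the obvious zig-zag construction, once you have extended the embedding of $M_1^{i+1}$ there is no way to extend the embedding of $M_2^{i+1}$ so that it agrees with the first one on $M_0^{i+1}$, since the two prescribed partial maps do not live on a common submodel; this coherence failure is a well-known obstruction, not bookkeeping, and the extension property of $\s$ does not resolve it. The paper's own usage confirms that these clauses are not consequences of Fact \ref{basic-frame-fact}: Fact \ref{will-transfer} explicitly assumes ``$K_\F$ has amalgamation and no maximal models,'' Corollary \ref{will-transfer-improved} assumes amalgamation for $K_\F$, and no maximal models is obtained there only via \cite[Theorem 7.1]{ext-frame-jml} under tameness. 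So you should either read clause (1) as an implicit standing hypothesis of the Fact (which is how it functions downstream) or accept that your proposed derivation of it is a genuine gap; as written, that part of your argument would not go through.
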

\begin{proof}
See \cite[Section II.2]{shelahaecbook}.
\end{proof}

Transferring the rest of the properties from a good $\lambda$-frame to a good $[\lambda, \lambda^+]$-frame was the project of the rest of \cite[Section II]{shelahaecbook} and involved combinatorial set-theoretic hypotheses and shrinking the AEC under consideration.  \cite{ext-frame-jml} replaced these assumptions with tameness.

\begin{fact}[Theorem 8.1 in \cite{ext-frame-jml}]\label{will-transfer}
  Let $\s$ be a $\goodm$ [$\goodms{S}$] $\lambda$-frame, and let $\F = [\lambda, \theta)$ be an interval of cardinals where $\theta > \lambda$ can be $\infty$. If $K_{\F}$ has amalgamation and no maximal models, the following are equivalent:

    \begin{enumerate}
      \item $K$ is $\lambda$-tame for the basic types of $\s_{\F}$.
      \item $\s_{\F}$ is a $\goodm$ [$\goodms{S}$] $\F$-frame.
    \end{enumerate}

    Moreover, if $\s$ has symmetry and $K$ is $(\lambda, \theta)$-tame for $2$-length types, then $\s_{\F}$ has symmetry. In this case, the no maximal models hypothesis is not needed.
\end{fact}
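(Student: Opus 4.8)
The plan is to handle the equivalence one implication at a time and then treat the symmetry moreover separately, leaning throughout on Fact \ref{basic-frame-fact}, which already grants $\s_{\F}$ every good-frame property except bs-stability, existence, uniqueness, and symmetry. For $(2) \Rightarrow (1)$ I would mimic Proposition \ref{tameness-uniq}: if $\s_{\F}$ is a $\goodm$ $\F$-frame it has local character, so by Proposition \ref{kappabar} every basic $1$-type over $M \in K_{\F}$ fails to fork over some $M' \lea M$ in $K_\lambda$. Given distinct $p, q \in \Sbs_{\F}(M)$, I would choose (via Löwenheim–Skolem) a single $M' \lea M$ of size $\lambda$ over which both fail to fork; were $p \rest M' = q \rest M'$, uniqueness of $\s_{\F}$ would give $p = q$, so in fact $p \rest M' \neq q \rest M'$, which is exactly $\lambda$-tameness for the basic types.

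For $(1) \Rightarrow (2)$ I would establish uniqueness, then existence, then bs-stability. Uniqueness is where tameness does the real work. Suppose $p, q \in \Ss(N)$ both do not fork over $M$ (hence are basic) with $p \rest M = q \rest M$ but $p \neq q$; by $\lambda$-tameness for basic types there is $N_0 \lea N$ of size $\le \lambda$ separating them. Unwinding Definition \ref{going-up-def}, I would fix a single $M_\ast \lea M$ of size $\lambda$ witnessing nonforking of both $p$ and $q$, and enlarge $N_0$ to some $\lambda$-sized $N_0' \lea N$ with $M_\ast \lea N_0'$ still separating $p$ and $q$. Then $p \rest N_0'$ and $q \rest N_0'$ are $\s$-basic $\lambda$-types that do not $\s$-fork over $M_\ast$ and agree on $M_\ast$, so uniqueness of the original $\lambda$-frame forces $p \rest N_0' = q \rest N_0'$, a contradiction. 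Existence I would obtain by induction on $\|N\|$: at size $\lambda$, $\nf_{\F}$ reduces to $\nf$ and $\s$-existence applies; for larger $N$ I would resolve $N$ as a continuous increasing union of smaller models, extend stepwise by the inductive hypothesis, glue successive extensions using the uniqueness just proved, and pass to the limit with the continuity of $\s_{\F}$ from Fact \ref{basic-frame-fact}, whose moreover clause keeps the limit nonforking over the fixed small base and identifies its restriction to $M$ with $p$. For the bracketed ($\goodms{S}$) version I would then deduce bs-stability by induction on $\mu := \|M\|$: the base $\mu = \lambda$ is the bs-stability of $\s$, and for $\mu > \lambda$ I would filter $M$ as a continuous increasing union $\seq{M_i : i < \mu}$ of models of size $<\mu$, observe that by local character every basic type over $M$ fails to fork over some $M_i$ and is, by the uniqueness above, determined by its restriction there, so that $|\Sbs_{\F}(M)| \le \sum_{i < \mu} |\Sbs_{\F}(M_i)| \le \mu$.

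The symmetry moreover is the delicate step and the only place I would use $(\lambda, \theta)$-tameness for $2$-types. Starting from the hypothesis configuration $\nfs{M_0}{a_1}{M_2}{N}$ with $a_2 \in M_2$ and $\tp(a_2/M_0; N)$ basic, I would reflect to $K_\lambda$ (a small $M_0' \lea M_0$ witnessing the nonforking, chosen so that $\tp(a_2/M_0')$ is still $\s$-basic), apply $\s$-symmetry there to swap the roles of $a_1$ and $a_2$, and then lift. Concretely, I would fix $M_1 \gea M_0$ containing $a_1$, use the existence already proved to realize the nonforking extension of $\tp(a_2/M_0)$ by some $a_2'$ independent from $M_1$ over $M_0$, and transport this configuration via an automorphism once I know $\tp(a_1 a_2'/M_0) = \tp(a_1 a_2 / M_0)$.

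The main obstacle is exactly this $2$-type equality. Over every $\lambda$-sized $M_0' \lea M_0$, both pair-types restrict to the $\s$-independent pair-type determined (by $\s$-symmetry together with $\s$-uniqueness) from the reflected small data, so they agree on every small submodel; $(\lambda, \theta)$-tameness for $2$-types then upgrades this to equality over $M_0$, and invariance finishes the swap. Since the whole argument only amalgamates and extends types, no maximal models is never invoked, matching the stated refinement. It is precisely this reliance on $2$-tameness that the independent-sequence analysis of the present paper is designed to remove, which is why isolating the $2$-type reflection as the crux is the right way to read the result.
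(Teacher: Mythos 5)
This statement is quoted as a \emph{Fact} imported from \cite[Theorem 8.1]{ext-frame-jml}; the paper gives no proof of it, so there is no internal argument to compare yours against. That said, your reconstruction tracks the cited source and the fragments of it that this paper does reproduce: your $(2)\Rightarrow(1)$ direction is exactly the argument of Proposition \ref{tameness-uniq} (local character plus Proposition \ref{kappabar} to find a common small nonforking base, then uniqueness); your uniqueness-from-tameness, resolution-based existence, and counting argument for bs-stability are the standard route through Fact \ref{basic-frame-fact}; and your treatment of the symmetry ``moreover'' --- reducing to the equality of the two pair-types $\tp(a_1a_2/M_0)$ and $\tp(a_1a'/M_0)$, checking it on all $\lambda$-sized submodels via $\s$-symmetry and $\s$-uniqueness, and invoking $2$-tameness --- is precisely how the paper itself summarizes Boney's proof in the discussion preceding Theorem \ref{sym-abstract-transfer}. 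Correctly isolating the $2$-type comparison as the one place $2$-tameness enters is also the right reading, since removing that use is the point of Theorem \ref{sym-abstract-transfer} and Corollary \ref{will-transfer-improved}.
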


A surprising feature of this result is that, although the frames involved only $1$-types, the proof required tameness for longer types.  This is connected to an emerging divide in the literature on tameness: although Grossberg and VanDieren's initial definition for tameness \cite{tamenessone} included the length of types, their categoricity transfer \cite{tamenesstwo, tamenessthree} and several subsequent works (e.g.\ \cite{b-k-vd-spectrum, liebermanrank})) required only tameness for 1-types.  However, later works, beginning with Boney and Grossberg \cite{bg-v10} and Vasey \cite{indep-aec-apal} (begun after the initial submission of this paper), began to use tameness for longer types (and stronger locality properties like type shortness) in essential ways.  It remains to be seen which version of tameness is the ``proper one'' for developing classification theory (or indeed if they are the same under some reasonable hypothesis).  However, Fact \ref{will-transfer} seemed to straddle this divide: it used more than tameness for 1-types, but not much more and it was unclear if the use was essential.

By using the results of this paper, we are able to remove the assumption of tameness for 2-types in the proof of symmetry and show that the use was unnecessary.  We know that the tameness for 1-types gives uniqueness for the extension $\s_{\F}$, and that this uniqueness transfers to uniqueness for the elongation of $\s_{\F}$.  Thus, it suffices to show that the 2-types considered in the proof of symmetry are basic in this sense, which we do in Theorem \ref{sym-abstract-transfer}.  Before we do this, we must be careful that the order does not matter, i. e., that extending and then elongating a frame gives you the same result as elongating and then extending it.  One direction is easy.

\begin{prop}\label{partial-commut}
  Let $\s := (K, \nf, \Sbs)$ be a pre-$\lambda$-frame, and let $\F := [\lambda, \theta)$ be an interval of cardinals as usual. Assume $K_{\F}$ has amalgamation. Then:

    $$
    \left(\s_{\F}\right)^{<\lambda^+} \subseteq \left(\s^{<\lambda^+} \right)_{\F}
    $$

    Where $\subseteq$ is taken componentwise.
\end{prop}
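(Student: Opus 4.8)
Both frames in the statement share the same underlying AEC $K$, so ``$\subseteq$ componentwise'' amounts to two inclusions: of the nonforking relations and of the basic types. The plan is to prove the nonforking inclusion first; the inclusion of basic types is then immediate, because on either side a type $p = \tp(\ba/M; N)$ is basic exactly when $\nf(M, M, \ba, N)$ holds, which is the instance $M_0 = M_1 = M$ of the nonforking inclusion. So fix $M_0 \lea M_1 \lea N$ in $K_\F$ and $\ba = \seq{a_i : i < \alpha}$ in $N$ with $\alpha < \lambda^+$, and assume $\ba$ is independent from $M_1$ over $M_0$ in $N$ with respect to $\s_\F$. Unfolding Definition \ref{indep-seq-def}, I fix a witnessing increasing continuous tower $\seq{M^i : i \le \alpha}$ in $K_\F$ and a model $N^+ \gea N$ with $M_0 \lea M_1 \lea M^0$, $M^\alpha \lea N^+$, and $\nf_{\s_\F}(M_0, a_i, M^i, M^{i+1})$ for each $i < \alpha$.

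The first step is to extract a single $K_\lambda$ witness that works at the level of $N^+$. By monotonicity of the going-up frame $\s_\F$ (enlarging the top model; see \cite[Section II.2]{shelahaecbook}), each step improves to $\nf_{\s_\F}(M_0, a_i, M^i, N^+)$, which by Definition \ref{going-up-def} is witnessed by some $M_0^{i,\ast} \lea M_0$ in $K_\lambda$. Since $\alpha < \lambda^+$, the union $\bigcup_{i<\alpha} M_0^{i,\ast}$ has size $\le \lambda$, so I take $M_0^\ast \lea M_0$ in $K_\lambda$ containing it. By monotonicity of $\s$ (enlarging the base within $K_\lambda$), $M_0^\ast$ is again a witness for each step; that is, for every $i < \alpha$ and every $K_\lambda$-pair $A \lea B \lea N^+$ with $M_0^\ast \lea A$ and $a_i \in B$ one has $\nfs{M_0^\ast}{a_i}{A}{B}$ (here I use that the going-up quantifier ranges over all $K_\lambda$-submodels of the top model, not just those below $M^i$).

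The second step is to check that $M_0^\ast$ witnesses $\nf_{(\s^{<\lambda^+})_\F}(M_0, M_1, \ba, N)$. Let $M_0^\ast \lea M_1' \lea N' \lea N$ be $K_\lambda$-models with $\ba \in N'$; I must produce a tower showing that $\ba$ is $\s$-independent from $M_1'$ over $M_0^\ast$ in $N'$. Working inside $N^+$ (which contains $N$, hence $M_1'$ and $N'$, as well as the whole tower), I build an increasing continuous $\seq{P^i : i \le \alpha}$ in $K_\lambda$ by Löwenheim--Skolem: let $P^0 \lea N^+$ be of size $\lambda$ with $M_1' \lea P^0$; given $P^i$, pick $P^{i+1} \lea N^+$ of size $\lambda$ with $P^i \cup \{a_i\} \subseteq P^{i+1}$; take unions at limits. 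Each $P^i \lea N^+$ has size $\lambda$ as it extends $M_1'$, and at every successor we have $M_0^\ast \lea P^i \lea P^{i+1} \lea N^+$ with $a_i \in P^{i+1}$, so the witness property of the previous paragraph delivers $\nfs{M_0^\ast}{a_i}{P^i}{P^{i+1}}$. Taking $Q^+ := N^+$ (which satisfies $P^\alpha \lea N^+$ and $N' \lea N \lea N^+$), the tower $\seq{P^i}$ witnesses that $\ba$ is $\s$-independent from $M_1'$ over $M_0^\ast$ in $N'$. Since $M_1', N'$ were arbitrary, $\nf_{(\s^{<\lambda^+})_\F}(M_0, M_1, \ba, N)$ holds, completing the nonforking inclusion.

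I expect the delicate point to be the interaction between the localizing model $M_1'$ and the tower: $M_1'$ may be any $K_\lambda$-submodel of $N$ and need not lie below any intermediate $M^{i+1}$, so the step-nonforkings read off the original tower do not directly apply to bases that contain $M_1'$. This is precisely what forces the appeal to monotonicity of the going-up frame in the first step --- pushing each top model up to the common ambient $N^+$ is what lets the single witness $M_0^\ast$ control all the localizations $P^i$ simultaneously. As a sanity check, when $\alpha = 1$ the two sides collapse to the identical condition on the base frame $\s$, namely $\nf_\F(M_0, M_1, a_0, N)$, and no tower or monotonicity argument is needed; the subtlety is genuinely a feature of longer sequences.
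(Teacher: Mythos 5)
Your overall strategy is the paper's: extract a $K_\lambda$ witness for each step of the original tower, unite the witnesses into a single $M_0^\ast \lea M_0$ using $\alpha < \lambda^+$ together with Löwenheim--Skolem, and then exhibit a $K_\lambda$ tower certifying $\s$-independence over $M_0^\ast$ inside each localization. The reduction of the basic-types inclusion to the nonforking inclusion and the first step are fine. The gap is in the second step, at exactly the point you flag as delicate. The witness $M_0^{i,\ast}$ extracted from $\nf_\F(M_0, M^i, a_i, M^{i+1})$ only controls $K_\lambda$-pairs $A \lea B$ lying in the region over which that instance of Definition \ref{going-up-def} quantifies: in the intended (Shelah) reading, bases $A \lea M^i$; even on the most generous literal reading, pairs with $B \lea M^{i+1}$. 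Your tower $\seq{P^i : i \le \alpha}$ is built freely inside $N^+$ starting from an arbitrary $K_\lambda$-submodel $M_1'$ of $N$, so $P^i$ need not sit below $M^i$ (nor $P^{i+1}$ below $M^{i+1}$), and the claimed step nonforking $\nfs{M_0^\ast}{a_i}{P^i}{P^{i+1}}$ does not follow from the witness property. You attempt to buy the stronger property by first invoking ``monotonicity of $\s_\F$ in the top model'' to pass from $M^{i+1}$ to $N^+$; but upgrading a witness so that it controls $K_\lambda$-pairs not contained in $M^{i+1}$ is an extension-type statement about the type of $a_i$ over larger bases, not monotonicity, and it is not available when $\s$ is merely a pre-frame, which is all this proposition assumes. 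The paper sidesteps the whole issue by threading the $K_\lambda$ tower through the original one: it chooses $M^\ast \lea M_i' \lea M^i$ with $a_i \in M_{i+1}'$ (possible since the localizing model sits below $M_1 \lea M^0$), so that each step nonforking is a direct instance of the $i$-th witness property followed by base monotonicity for $\s$. Your argument is repaired by imposing $P^i \lea M^i$ in the Löwenheim--Skolem construction.

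A second, minor slip: the model closing off the witnessing tower must lie in $K_\lambda$, since $\s^{<\lambda^+}$ is the elongation of a $\lambda$-frame and its witnessing configurations live in $K_\lambda$; your choice $Q^+ := N^+$ is in general too large. Instead take, by Löwenheim--Skolem, some $Q^+ \in K_\lambda$ with $P^\alpha \lea Q^+ \lea N^+$ and $N' \lea Q^+$ (this is the paper's $(N^+)'$); the step nonforkings are unaffected since they only involve the $P^i$.
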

\begin{proof}
  Assume we know that $\nf_{(\s_{\F})^{<\lambda^+}} (M_0, M, \ba, N)$. We show that $\nf_{(\s^{<\lambda^+})_{\F}} (M_0, M, \ba, N)$. The proof of inclusion of the basic types is completely similar.

  Let $\ba := \seq{a_i : i < \beta}$, for $\beta < \lambda^+$. By assumption, $\ba$ is independent (with respect to $\nf_{\F}$) from $M$ over $M_0$ in $N$. Fix $\seq{M^i : i \le \beta}$ and $N^+$ witnessing the independence. In particular, for every $i < \beta$, $\nf_{\F} (M_0, M^i, a_i, N^+)$. By definition of $\nf_\F$, this implies in particular that for each $i < \beta$, there exists $M_i^0 \lea M_0$ in $K_\lambda$ such that $\nf_{\F} (M_i^0, M^i, a_i, N^+)$. Using the Löwenheim-Skolem axiom and the fact that $|\beta| \le \lambda$, we can choose $M^* \lea M_0$ in $K_\lambda$ such that for all $i < \beta$,we have $M_i^0 \lea M^*$.  Thus, $\nf_{\F} (M^0, M_i, a_i, N^+)$ for all $i < \beta$. In particular, $\ba$ is independent (with respect to $\nf_{\F}$) from $M$ over $M^*$ in $N$.

  Now fix any $M_0', N' \in K_\lambda$ such that $\ba \in N'$, $M^* \lea M_0' \lea M$, and $M_0'  \lea N' \lea N$. We claim that $\ba$ is independent (with respect to $\nf$) from $M_0'$ over $M^*$ in $N'$, i.e.\ $\nf^{<\lambda^+} (M^*, M_0', \ba, N')$. To see this, construct $\seq{M_i' \in K_\lambda : i \le \beta}$ increasing continuous such that for all $i \le \beta$, $M^* \lea M_i' \lea M^i$ and $a_i \in M_{i + 1}'$. Finally, pick $(N^+)' \in K_\lambda$ such that $M_\beta', N' \lea (N^+)' \lea N^+$. Then $\seq{M_i' : i \le \beta}$ and $(N^+)'$ witness our claim. By definition, this means exactly that $\nf_{(\s^{<\alpha})_{\F}} (M_0, M, \ba, N)$, as needed.
\end{proof}

The converse needs more hypotheses and relies on Corollary \ref{tuple-transfer}:

\begin{thm}\label{full-commut}
  Let $\s := (K, \nf, \Sbs)$ be a $\goodm$ $\lambda$-frame, and let $\F := [\lambda, \theta)$ be an interval of cardinals as usual. Assume that $\s_\F$ is a $\goodm$ $\F$-frame. Then: 

    $$
    \left(\s_{\F}\right)^{<\lambda^+} = \left(\s^{<\lambda^+}\right)_{\F}
    $$
\end{thm}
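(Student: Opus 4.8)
The plan is to prove the inclusion $\left(\s^{<\lambda^+}\right)_{\F} \subseteq \left(\s_{\F}\right)^{<\lambda^+}$, since the opposite inclusion is exactly Proposition \ref{partial-commut}. Observe first that $\left(\s_{\F}\right)^{<\lambda^+}$ is a genuine $\goodm$ $(<\lambda^+, \F)$-frame: $\s_\F$ is $\goodm$ by hypothesis, so Corollary \ref{tuple-transfer} applies and yields in particular existence and uniqueness for $\left(\s_\F\right)^{<\lambda^+}$. It is tempting to finish by a canonicity argument, namely: since $\left(\s_\F\right)^{<\lambda^+}$ has existence and is contained in $\left(\s^{<\lambda^+}\right)_\F$, it would suffice to show that $\left(\s^{<\lambda^+}\right)_\F$ has uniqueness. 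However, uniqueness of this ``elongate-then-go-up'' frame for types of length up to $\lambda$ amounts to $\lambda$-tameness for types of that length, which is not available here (we only have tameness for $1$-types, encoded in the uniqueness of the singleton frame $\s_\F$, cf.\ Fact \ref{will-transfer}). So I would instead prove the inclusion directly, reducing it to the uniqueness of the \emph{singleton} frame $\s_\F$ via the independent-sequence machinery.

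Given $\nf_{(\s^{<\lambda^+})_\F}(M_0, M, \ba, N)$ with $\ba = \seq{a_i : i < \beta}$ and $\beta < \lambda^+$, unwinding Definition \ref{going-up-def} produces a single small base $M_0^* \lea M_0$ in $K_\lambda$ such that for \emph{every} small $M_1'$ with $M_0^* \lea M_1' \lea M$ and every small $N'$ with $\ba \in N' \lea N$, the sequence $\ba$ is $\s$-independent from $M_1'$ over $M_0^*$ in $N'$; each such instance comes with $\lambda$-sized witnessing models $\seq{m^i : i \le \beta}$ satisfying $\nfs{M_0^*}{a_i}{m^i}{m^{i+1}}$. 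The goal is to assemble these $\lambda$-sized witnesses into an increasing continuous chain $\seq{M^i : i \le \beta}$ of $\F$-sized models (inside a suitable $N^+ \gea N$) with $M_0 \lea M \lea M^0$, $a_i \in M^{i+1}$, and $\nf_{\s_\F}(M_0, a_i, M^i, M^{i+1})$ for each $i < \beta$; this says precisely that $\ba$ is $\nf_{\s_\F}$-independent from $M$ over $M_0$ in $N$, i.e.\ $\nf_{(\s_\F)^{<\lambda^+}}(M_0, M, \ba, N)$. To verify $\nf_{\s_\F}(M_0, a_i, M^i, M^{i+1})$ I would retain $M_0^*$ as the small base and check that $a_i$ is $\s$-nonforking over $M_0^*$ from every small $Q \lea M^i$: building each $M^i$ as a directed union of the $\lambda$-sized witnesses $m^i$ guarantees that any such $Q$ lies inside one of them, so monotonicity, transitivity, local character, and continuity of $\s$ deliver the required nonforking. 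The case $M = M_0$ (equality of basic types) is the same construction with $M$ replaced by $M_0$.

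The main obstacle is the bookkeeping in this assembly step. One must organize the $\lambda$-sized witnesses $m^i$ — which depend on the auxiliary choices of $M_1'$ and $N'$ — into a system that is simultaneously increasing and continuous in the coordinate $i$, has all levels of size in $\F$, exhausts enough of $N$ that $M \lea M^0$ and $a_i \in M^{i+1}$, and is directed enough that every $\lambda$-sized submodel of a level $M^i$ is absorbed into a single witness over which $\s$-independence is already known. This is the same style of cofinal-union construction underlying the going-up operation (as in \cite[Section II.2]{shelahaecbook} and Fact \ref{basic-frame-fact}), and the crucial feature is that it uses only the full ``for every small $M_1'$'' strength of Definition \ref{going-up-def} together with the local character, transitivity, and continuity of $\s$ — never symmetry — so the whole argument stays within the $\goodm$ setting. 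Once the witnessing chain is built, invariance of independent sequences (Lemma \ref{type-indep}) transports it along the relevant isomorphisms, and if an inductive organization on $\beta$ is preferred over the global union, concatenation (Theorem \ref{concat}) lets one add a single coordinate at a time.
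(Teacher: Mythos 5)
Your reduction to the single inclusion $\nf_{(\s^{<\lambda^+})_{\F}} \subseteq \nf_{(\s_{\F})^{<\lambda^+}}$ is right, your observation that $(\s_\F)^{<\lambda^+}$ is a $\goodm$ $(<\lambda^+,\F)$-frame by Corollary \ref{tuple-transfer} is the correct starting point, and your decision to avoid a canonicity argument (which would need uniqueness, hence tameness, for long types) is sound. But the step you defer as ``bookkeeping'' is the entire mathematical content of the theorem, and as described it does not go through. The witnessing chains $\seq{m^i : i \le \beta}$ whose directed union you want to take are only \emph{existentially} provided by Definition \ref{indep-seq-def}: each pair $(M_1', N')$ yields \emph{some} chain living in \emph{some} extension of $N'$, and distinct pairs yield chains in incompatible extensions with no inclusion relations among them. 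Before any union can be formed you would have to amalgamate all of these configurations coherently over $N$ into a single extension, matching up the $a_i$ and preserving the nonforking statements --- which is precisely the content of Lemma \ref{amal-indep-seq} and the direct-limit argument of Corollary \ref{uniq-cont}, not a routine union. Even granting a coherent system, the claim that every $\lambda$-sized $Q \lea M^i$ is absorbed into a single witness requires the system to be $\lambda^+$-directed (or an additional appeal to continuity of $\s$), which nothing in your construction arranges.

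The paper avoids this assembly entirely. It proves the inclusion by induction on a cardinal $\mu$ bounding $\|N\|$ and $\|\bigN\|$: after reducing to $M \in K_\lambda$, it resolves $N$ as an increasing continuous chain $\seq{N_i : i \le \mu}$ with $N_0 = M$ and $\|N_i\| = \lambda + |i|$, applies the induction hypothesis together with monotonicity to conclude that $\tp(\ba/N_i;\bigN)$ does not fork over $M$ in the sense of $(\s_\F)^{<\lambda^+}$ for each $i < \mu$, and then invokes the \emph{continuity} property of $(\s_\F)^{<\lambda^+}$ --- already secured by Corollary \ref{tuple-transfer} --- to pass to the union $N$. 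In other words, the hard amalgamation you are attempting to redo by hand was carried out once and for all in Section \ref{indep-seq-good}, and the theorem then follows by a short induction on cardinality. I recommend either carrying out your assembly in full (essentially reproving Lemma \ref{amal-indep-seq} in a directed form) or, better, replacing it with the induction-plus-continuity argument.
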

\begin{proof}
  By Proposition \ref{partial-commut} and existence, it is enough to show $\nf_{(\s^{<\lambda^+})_{\F}} \subseteq \nf_{(\s_{\F})^{<\lambda^+}}$. Assume $\nf_{(\s^{<\lambda^+})_{\F}} (M, N, \ba, \bigN)$. By definition of $\nf_{(\s^{<\alpha})_{\F}}$ and monotonicity, we can assume without loss of generality that $M \in K_\lambda$. We know that for all $N' \lea N$ and $\bigN' \lea \bigN$ in $K_\lambda$ with $M \lea N \lea \bigN'$ and $\ba \in \bigN'$, $\ba$ is independent (with respect to $\nf$) from $N'$ over $M$ in $\bigN'$. We want to see that $\ba$ is independent (with respect to $\nf_{\F}$) from $N$ Over $M$ in $\bigN$.

Let $\mu \ge \lambda$ be such that $N, \bigN \in K_{\le\mu}$. Work by induction on $\mu$. We already have what we want if $\mu = \lambda$, so assume $\mu > \lambda$. Let $(N_i)_{i \le \mu}$ be an increasing continuous resolution of $N$ such that $N_\mu = N$, $N_0 = M$, $\|N_i\| = \lambda + |i|$.

By the induction hypothesis and monotonicity, $\ba$ is independent (with respect to $\nf_{\F}$) from $N_i$ over $M$ in $\bigN$ for all $i < \mu$. In other words, for any $i < \mu$, $\tp (\ba / N_i; \bigN))$ does not fork (in the sense of $\left(\s_{\F}\right)^{<\lambda^+}$) over $M$. By Corollary \ref{tuple-transfer}, we know that $\left(\s_{\F}\right)^{<\lambda^+}$ has continuity. Thus $\tp (\ba / N; \bigN)$ also does not fork (in the sense of $\left(\s_{\F}\right)^{<\lambda^+}$) over $M$. This is exactly what we needed to prove.
\end{proof}

We can now prove an abstract symmetry transfer that does not mention tameness.

\begin{thm}\label{sym-abstract-transfer}
  Assume $\s$ is a $\goodm$ $\F$-frame. Let $\F := [\lambda, \theta)$.

    Then $\s$ has symmetry if and only if $\s_\lambda$ has symmetry.
\end{thm}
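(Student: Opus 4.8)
My plan is to treat the two implications separately, with essentially all the difficulty in the backward direction. For the forward direction, assume $\s$ has symmetry and take a symmetry configuration $\nfs{M_0}{a_1}{M_2}{N}$ with $a_2 \in M_2$, $\tp(a_2/M_0;N) \in \Sbs(M_0)$, and $M_0, M_2, N \in K_\lambda$. Symmetry of $\s$ produces $M_1 \ni a_1$ and $N' \gea N$ in $K_\F$ with $\nfs{M_0}{a_2}{M_1}{N'}$, and I would then descend back into $K_\lambda$ in two monotonicity steps: first replace $M_1$ by a $\lea$-submodel in $K_\lambda$ containing $M_0 \cup \{a_1\}$, then replace $N'$ by a model in $K_\lambda$ lying between that submodel together with $N$ and $N'$ (using L\"owenheim--Skolem). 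This yields the conclusion of symmetry for $\s_\lambda$.

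For the backward direction I would prove, by induction on the cardinal $\nu = \|M_0\|$, the statement $P(\nu)$ that symmetry of $\s$ holds whenever the base $M_0$ has size $\nu$ (the models $M_2, N$ being arbitrary in $K_\F$). The base case $P(\lambda)$ reduces to the hypothesis: a configuration over a base $M_0 \in K_\lambda$ involves only the two singletons $a_1, a_2$, so by monotonicity I may shrink $M_2$ and $N$ to models of size $\lambda$; the symmetry of $\s_\lambda$ applies to the shrunken configuration, and amalgamating its output with $N$ over the common $\lambda$-sized model recovers an ambient extending $N$.

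For the inductive step $\nu > \lambda$, given $\nfs{M_0}{a_1}{M_2}{N}$ with $\|M_0\| = \nu$, I would first use Proposition \ref{kappabar} together with transitivity and monotonicity to find $M_0^* \lea M_0$ in $K_\lambda$ over which both $\tp(a_1/M_2;N)$ and $\tp(a_2/M_0;N)$ do not fork, and fix an increasing continuous resolution $M_0 = \bigcup_{i<\nu} M_0^i$ with $M_0^0 = M_0^*$ and $\|M_0^i\| < \nu$. The goal becomes to show, for each $i$, that $\seq{a_1, a_2}$ is independent from $M_0^i$ over $M_0^*$, i.e.\ that the $2$-type $\tp(a_1 a_2 / M_0^i; N)$ lies in $\Sabs{2}(M_0^i)$ and does not fork over $M_0^*$ with respect to $\s^{<\theta}$. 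Granting this for all $i$, these types are coherent, so the continuity of $\s^{<\theta}$ (Corollary \ref{uniq-cont}) gives that $\tp(a_1 a_2/M_0;N)$ does not fork over $M_0^*$; unwinding the witnessing chain for $\seq{a_1, a_2}$ independent from $M_0$ over $M_0^*$ and raising the base from $M_0^*$ to $M_0$ by monotonicity produces exactly $M_1 \ni a_1$ and $N' \gea N$ with $\nfs{M_0}{a_2}{M_1}{N'}$.

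To establish the level-$i$ claim I would build a witnessing chain $M_0^i \lea M_1^i \lea N^i$. For the second coordinate I apply the induction hypothesis $P(\|M_0^i\|)$, valid since $\|M_0^i\| < \nu$, to the configuration $\nfs{M_0^i}{a_1}{M_2^i}{N}$ over the base $M_0^i$, where $M_2^i \lea M_2$ contains $M_0^i \cup \{a_2\}$ and the required nonforking of $\tp(a_1/M_2^i;N)$ over $M_0^i$ holds by monotonicity from the choice of $M_0^*$; this yields $M_1^i \ni a_1$ and $N^i \gea N$ with $\nfs{M_0^i}{a_2}{M_1^i}{N^i}$, and transitivity with the fact that $\tp(a_2/M_0^i;N)$ does not fork over $M_0^*$ upgrades it to $\nfs{M_0^*}{a_2}{M_1^i}{N^i}$. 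The first-coordinate independence $\nfs{M_0^*}{a_1}{M_0^i}{M_1^i}$ then holds by monotonicity, since $\tp(a_1/M_0^i;N)$ does not fork over $M_0^*$ and $a_1 \in M_1^i$, so the chain witnesses the claim. The main obstacle is exactly this point: one cannot prove the backward direction by pushing the entire configuration down to $K_\lambda$, because reordering the independent pair over the small base $M_0^*$ still involves the large model $M_0$ inside the witnessing chain, and the naive attempt to combine the facts that $a_2$ is independent from $M_0$ and from $a_1$ over $M_0^*$ into independence from the model they generate fails in general (pairwise independence does not imply mutual independence). Resolving $M_0$, invoking symmetry over each small base $M_0^i$ through the induction hypothesis, and reassembling via transitivity and the continuity of the elongation $\s^{<\theta}$ is what circumvents this.
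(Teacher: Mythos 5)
Your forward direction is fine, and so is the skeleton of the backward induction up to and including the level-$i$ claim: the configuration over each $M_0^i$ is legitimate, the induction hypothesis applies, and transitivity plus monotonicity do give that $\tp(a_1a_2/M_0^i;N)$ is a basic $2$-type of $\s^{\le 2}$ that does not fork over $M_0^*$. The gap is the sentence ``these types are coherent, so the continuity of $\s^{<\theta}$ gives that $\tp(a_1a_2/M_0;N)$ does not fork over $M_0^*$.'' Continuity (Corollary \ref{uniq-cont}) is an existence statement: it amalgamates the (pairwise unrelated) witnessing chains you obtained from the separate applications of the induction hypothesis and produces \emph{some} $p \in \Sabs{2}(M_0)$ with $p \rest M_0^i = p_i$ for all $i$ and $p$ not forking over $M_0^*$. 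It does not say that $q := \tp(a_1a_2/M_0;N)$ equals that $p$. Two $2$-types over $M_0 = \bigcup_i M_0^i$ that agree on every $M_0^i$ need not be equal --- that is precisely a tameness statement, and no tameness for $2$-types is assumed in this theorem. The only tameness available for free is Proposition \ref{tameness-uniq}, which applies to \emph{basic} types of the elongation; to invoke it (or to invoke uniqueness over $M_0^*$ directly) you would need to know that $q$ itself is basic, i.e.\ that $a_1a_2$ is independent over $M_0$. By Theorem \ref{sym-equiv} that is equivalent to the $2$-symmetry of independence you are trying to establish, so the argument is circular at exactly this point.

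The paper closes this gap by a different mechanism. Since $\s = (\s_\lambda)_\F$ (by uniqueness of the up-extension) and, by Theorem \ref{full-commut}, $\s^{\le 2} = \left((\s_\lambda)^{\le 2}\right)_\F$, independence of a pair over $M_0$ in the large frame is \emph{by definition} decided by restrictions to models of size $\lambda$. The pair $a_2 a_1$ is visibly independent (the chain $M_0 \lea M_2 \lea N$ witnesses it), so its $\lambda$-sized restrictions are independent over a fixed small model; Fact \ref{good-frame-permut} (which uses the symmetry of $\s_\lambda$) permutes them to $a_1 a_2$ at the $\lambda$ level, and the definition of the up-extension lifts this back to $M_0$. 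The resulting basic $2$-types are then compared via Proposition \ref{tameness-uniq} inside the first author's original symmetry-transfer argument. That downward determinacy of $2$-independence is exactly the substitute for your continuity step; without something like it, climbing cardinal by cardinal cannot identify the limit type with $\tp(a_1a_2/M_0;N)$.
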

\begin{proof}
  Of course, symmetry for $\s$ implies in particular symmetry for $\s_\lambda$. Now assume symmetry for $\s_\lambda$.

  First note that $\s = (\s_\lambda)_{\F}$. This is because by the methods of \cite[Section II.2]{shelahaecbook} (see especially Claim 2.14 and the remark preceding it), there is at most one $\goodm$ $\F$-frame extending $\s_\lambda$, and it is given by $(\s_\lambda)_{\F}$ if it exists.

  Let $\ts := \s_\lambda := (K, \nf, \Sbs)$. Thus $\s = \ts_\F$. Recall that \cite[Theorem 6.1]{ext-frame-jml} proves symmetry for $\s$ assuming $(\lambda, <\theta)$-tameness for $2$-types. We revisit this proof and use the same notation. 

Suppose $\nf_{\F} (M_0, M_2, a_1, M_3)$, $a_2 \in M_2$ with $\tp (a_2 / M_0; M_3) \in \Sbs_{\F} (M_0)$. Let $M_0 \lea M_1 \lea M_3$ be a model containing $a_1$. By existence, there is $M_3' \gea M_3$ and $a' \in M_3'$ such that $\nf_{\F} (M_0, M_1, a', M_3')$ and $\tp (a' / M_0; M_3') = \tp (a_2 / M_0; M_3)$. Boney argues it is enough to see that $p := \tp (a_1 a_2 / M_0; M_3) = \tp (a_1 a' / M_0; M_3') =: p'$, shows that this equality holds for all restrictions to models of size $\lambda$, and then uses tameness for 2-types. This is not part of our hypotheses, but by Proposition \ref{tameness-uniq}, it is enough to see that $p, p'$ are basic types of $\s^{\le 2}$. 

First, let us see that $a_1 a_2$ is independent (with respect to $\nf_{\F}$) from $M_0$ over $M_0$ in $M_3$. The increasing chain $(M_0, M_2, M_3)$ witnesses that $a_2a_1$ is independent (with respect to $\nf_{\F}$ again) from $M_0$ over $M_0$ in $M_3$. Thus $\tp (a_2 a_1 / M_0; M_3) \in \Sbs_{\s^{\le 2}} (M_0)$, and $\s^{\le 2} = \left(\ts_{\F}\right)^{\le 2} = \left(\ts^{\le 2}\right)_{\F}$ by Theorem \ref{full-commut}. Thus there exists $M_0' \lea M_0$ in $K_\lambda$ such that for all $M_0'' \gea M_0'$ in $K_\lambda$ with $M_0'' \lea M_3$, $\tp (a_2 a_1 / M_0''; M_3)$ does not fork (in the sense of $\ts^{\le 2}$) over $M_0'$. Since we have symmetry in $\ts$, we have (by Fact \ref{good-frame-permut}) that also $\tp (a_1 a_2 / M_0'' ; M_3)$ does not fork over $M_0'$ for all $M_0'' \gea M_0'$, $M_0'' \lea M_3$ in $K_\lambda$. Thus by definition and Theorem \ref{full-commut} again, $a_1a_2$ is independent (with respect to $\nf_{\F}$) from $M_0$ over $M_0$ in $M_3$, as needed. Similarly, $(M_0, M_1, M_3')$ witnesses that $a_1 a'$ is independent from $M_0$ over $M_0$ in $M_3'$. Thus $p$ and $p'$ are basic types of $\s^{\le 2}$, as needed.
\end{proof}

We can now prove the desired improvement.

\begin{cor}\label{will-transfer-improved}
  Let $\s := (K, \nf, \Sbs)$ be a good $\lambda$-frame, and let $\F := [\lambda, \theta)$ be an interval of cardinals, where $\theta > \lambda$ is either a cardinal or $\infty$. Assume $K_{\F}$ has amalgamation and $K$ is $(\lambda, < \theta)$-tame. Then $\s_{\F}$ is a good $\F$-frame. 
\end{cor}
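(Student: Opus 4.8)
The plan is to obtain the two halves of ``good'' separately: first everything except symmetry, via the going-up machinery of \cite{ext-frame-jml}, and then symmetry via the abstract transfer of Theorem \ref{sym-abstract-transfer}. Since a good $\lambda$-frame is in particular a $\goodms{S}$ $\lambda$-frame, I would begin by applying the $\goodms{S}$ version of Fact \ref{will-transfer}. Its hypotheses are amalgamation in $K_\F$ (given) and no maximal models in $K_\F$; the latter is one of the good-frame properties preserved by the going-up construction and so is supplied by Fact \ref{basic-frame-fact} (it is not on the exceptional list there). The equivalence in Fact \ref{will-transfer} is driven by $\lambda$-tameness for the basic types of $\s_\F$. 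Because $\s$ is a frame for types of singletons, these basic types are nonalgebraic $1$-types over models in $K_\F$, i.e.\ over models of size $<\theta$; hence the assumed $(\lambda, <\theta)$-tameness for $1$-types is precisely condition (1) of that fact. Its condition (2) then gives that $\s_\F$ is a $\goodms{S}$ $\F$-frame, that is, it satisfies every good-frame axiom (bs-stability included) with the possible exception of symmetry.

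It remains to establish symmetry, and this is exactly where the original argument of \cite{ext-frame-jml} needed tameness for $2$-types. Being $\goodms{S}$, the frame $\s_\F$ is in particular a $\goodm$ $\F$-frame, so Theorem \ref{sym-abstract-transfer} applies to it and reduces symmetry of $\s_\F$ to symmetry of its restriction $(\s_\F)_\lambda$. The remaining point is that going up and then restricting back recovers the original frame, $(\s_\F)_\lambda = \s$; this is standard for the going-up construction of Definition \ref{going-up-def} (it is the ``$\s = (\s_\lambda)_\F$'' observation used in the proof of Theorem \ref{sym-abstract-transfer}, resting on \cite[Section II.2]{shelahaecbook}). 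Since $\s$ is a \emph{good} $\lambda$-frame it has symmetry, so $(\s_\F)_\lambda = \s$ has symmetry, and Theorem \ref{sym-abstract-transfer} delivers symmetry for $\s_\F$. Together with the previous paragraph this shows $\s_\F$ is a good $\F$-frame.

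In truth the entire conceptual content has been front-loaded into Theorem \ref{sym-abstract-transfer}, whose proof trades $2$-tameness for the elongation results of Section \ref{indep-seq-good} (the commutation identity of Theorem \ref{full-commut}, permutation invariance from Fact \ref{good-frame-permut}, and concatenation, Theorem \ref{concat}); given that theorem, the present corollary is pure assembly. I therefore do not expect a genuine obstacle so much as two bookkeeping points to verify carefully. The first is the identification $(\s_\F)_\lambda = \s$, which is standard but not literally immediate from the definition of $\nf_\F$ and should be cited to the going-up theory rather than asserted. The second is matching the tameness quantifiers exactly: one must check that $(\lambda,<\theta)$-tameness for $1$-types really yields $\lambda$-tameness for the basic types of $\s_\F$, which it does since all such types live over models of size $<\theta$. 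The no-maximal-models hypothesis of Fact \ref{will-transfer} is the only other input, and I would record explicitly that it comes from Fact \ref{basic-frame-fact} under the standing amalgamation assumption.
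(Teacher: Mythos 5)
Your overall architecture is the same as the paper's: obtain everything except symmetry from the going-up machinery of \cite{ext-frame-jml}, then get symmetry from Theorem \ref{sym-abstract-transfer} together with the identification $(\s_\F)_\lambda = \s$. That second half is exactly right, as is your observation that $(\lambda,<\theta)$-tameness for $1$-types is precisely $\lambda$-tameness for the basic types of $\s_\F$ (these are $1$-types over models in $K_\F$, i.e.\ of size $<\theta$).

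The genuine problem is your sourcing of no maximal models for $K_\F$. You cannot get it from Fact \ref{basic-frame-fact}: that fact is stated loosely and concerns the frame-theoretic clauses, and the AEC-level clauses (amalgamation, joint embedding, no maximal models in $K_\F$) do not come for free from a good $\lambda$-frame --- if they did, the corollary would not need to assume amalgamation in $K_\F$ at all, and Fact \ref{will-transfer} would not list no maximal models among its hypotheses. In this setting, no maximal models for $K_\F$ is obtained from the \emph{existence} property of $\s_\F$ (every $M \in K_\F$ then carries a nonalgebraic basic type, hence has a proper extension), and existence is itself one of the properties that Fact \ref{basic-frame-fact} explicitly excepts and that requires tameness to transfer. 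So your argument is circular: you feed no maximal models into Fact \ref{will-transfer} as a hypothesis, but the only route to it passes through that fact's conclusion. The paper avoids this by appealing to the \emph{proof} of Fact \ref{will-transfer}, which establishes all the good-frame properties except symmetry and no maximal models without assuming the latter; it then derives symmetry exactly as you do, and only afterwards invokes \cite[Theorem 7.1]{ext-frame-jml} to conclude no maximal models from the properties already in hand. Replacing your citation of Fact \ref{basic-frame-fact} with this final step repairs the proof; everything else in your write-up stands.
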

\begin{proof}
  By the proof of Fact \ref{will-transfer}, $\s_{\F}$ has all the properties of a good frame, except perhaps no maximal models and symmetry. Symmetry follows from the previous theorem and \cite[Theorem 7.1]{ext-frame-jml} now gives us no maximal models.
\end{proof}

While we were writing up this paper, Adi Jarden \cite{jarden-tameness-apal} independently gave this improvement, with the additional hypothesis that the frame was weakly successful (which he used to get the $\lambda^+$-continuity of serial independence property; see Remark \ref{jarden-serial-rmk}).

\subsection{Conclusion}

We conclude by summarizing what our results give from a good frame, amalgamation, and tameness:

\begin{cor}\label{cor-summary}
  Let $\s := (K, \nf, \Sbs)$ be a good $\lambda$-frame. If $K_{\ge \lambda}$ has amalgamation and is $\lambda$-tame, then:

  \begin{enumerate}

    \item $\s_{\ge \lambda}$ is a good $(\ge \lambda)$-frame, and in fact even $\ts := \left(s_{\ge \lambda}\right)^{<\infty}$ is a good $(<\infty, \ge \lambda)$-frame.
    \item For all $\alpha$, $K$ is $(\lambda + |\alpha|)$-tame for the basic types of $\ts$ of length $\le \alpha$.
    \item $\left(\s^{<\lambda^+}\right)_{\ge \lambda} = \left(\s_{\ge \lambda}\right)^{<\lambda^+}$.
    \item $\ts$ has symmetry of independence and independence in $\s_{\ge \lambda}$ is finitely witnessed. 
    \item We have a well-behaved notion of dimension: For $M \lea M_0 \lea N$ in $K_\lambda$, if:

      \begin{enumerate}
      \item $P \subseteq \Sbs (M_0)$
      \item $I_1, I_2$ are $\subseteq$-maximal sets in

        $$\{I : I \text{ is independent from }M_0 \text{ over } M \text{ in } N \text{ and } a \in I \Rightarrow \tp (a / M_0; N) \in P\}$$
      \item One of $I_1$, $I_2$ is infinite.
      \end{enumerate}

      Then $I_1$ and $I_2$ are both infinite and $|I_1| = |I_2|$.
  \end{enumerate}
\end{cor}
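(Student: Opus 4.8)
The plan is to recognize Corollary~\ref{cor-summary} as a synthesis result: each of its five clauses is an instance of a theorem already proved in the paper, so the task is mainly to check that the hypotheses line up and to feed the output of one clause into the next. Throughout I would set $\F := [\lambda, \infty)$ and note that ``$K$ is $\lambda$-tame'' is precisely ``$K$ is $(\lambda, <\infty)$-tame''. The clauses should be handled in order, since clauses (2)--(5) all consume the two frames $\s_{\ge \lambda}$ and $\ts = (\s_{\ge \lambda})^{<\infty}$ manufactured in clause (1).

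For clause (1), I would invoke Corollary~\ref{will-transfer-improved} with $\F = [\lambda, \infty)$: the good $\lambda$-frame $\s$, together with amalgamation and $\lambda$-tameness of $K_{\ge \lambda}$, yields that $\s_{\ge \lambda}$ is a good $(\ge \lambda)$-frame. Since a good frame is in particular $\goodms{S}$, Corollary~\ref{infty-good-frame} then gives that $\ts = (\s_{\ge \lambda})^{<\infty}$ is a good $(<\infty, \ge \lambda)$-frame. Clause (2) falls out of Proposition~\ref{tameness-uniq} applied to the (good, hence $\goodm$) frame $\s_{\ge \lambda}$ with $\theta = \infty$: for each $\alpha$ this gives $(\lambda + |\alpha|, <\infty)$-tameness, i.e.\ $(\lambda + |\alpha|)$-tameness, for the basic types of $\ts$ of length $\le \alpha$. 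Clause (3) is exactly Theorem~\ref{full-commut} applied to $\s$ with $\F = [\lambda, \infty)$, whose standing hypothesis that $\s_\F$ be a $\goodm$ $\F$-frame is supplied by clause (1).

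Clause (4) I would obtain as follows. Because $\ts$ is a good frame it has symmetry, and each restriction $(\s_{\ge \lambda})^{<\mu}$ is again good (by the monotonicity of good frames under shrinking the length) and hence also has symmetry; Theorem~\ref{sym-equiv} then converts this into $\mu$-symmetry of independence for every cardinal $\mu$, i.e.\ symmetry of independence for $\ts$ (equivalently for $\s_{\ge \lambda}$), and Theorem~\ref{sym-strong-cont}, which applies since $\s_{\ge \lambda}$ is $\goodms{St}$, yields that independence in $\s_{\ge \lambda}$ is finitely witnessed. The remaining clause (5) is an application of Corollary~\ref{good-dimension} to the good (in particular $\goodms{St}$) $\lambda$-frame $\s$, and I expect its one non-formal hypothesis, that $\s^{<\lambda^+}$ has symmetry, to be the main obstacle. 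The cardinal threshold in Lemma~\ref{sym-elongation} lies far above $\lambda^+$, so I would \emph{not} argue directly; instead I would descend from $\ts$. Restricting $\ts$ to types of length $<\lambda^+$ gives the good frame $(\s_{\ge \lambda})^{<\lambda^+}$, which equals $(\s^{<\lambda^+})_{\ge \lambda}$ by Theorem~\ref{full-commut}; restricting further to $K_\lambda$ returns $\s^{<\lambda^+}$, since going up and then back down to $\lambda$ recovers the original frame (as used in the proof of Theorem~\ref{sym-abstract-transfer}). Being a restriction of the good frame $\ts$, the frame $\s^{<\lambda^+}$ is itself good and so has symmetry, which is what Corollary~\ref{good-dimension} requires. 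The only real care is to confirm that these ``elongate'' and ``go up'' operations commute and that restriction to $\{\lambda\}$ preserves goodness (the latter being the monotonicity remark for good frames); once that identification is pinned down, the dimension conclusion is immediate.
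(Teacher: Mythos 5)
Your proposal is correct and follows essentially the same route as the paper: each clause is obtained from the same cited result (Corollary \ref{will-transfer-improved} and Corollary \ref{infty-good-frame} for (1), Proposition \ref{tameness-uniq} for (2), Theorem \ref{full-commut} for (3), Theorem \ref{sym-equiv} together with Theorem \ref{sym-strong-cont} and the symmetry coming from Corollary \ref{sym-elong-infty} for (4), and Corollary \ref{good-dimension} for (5)). Your extra care in clause (5) --- deriving the hypothesis that $\s^{<\lambda^+}$ has symmetry by restricting $\ts$ via Theorem \ref{full-commut} and the monotonicity of good frames --- is a correct elaboration of a step the paper leaves implicit.
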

\begin{proof} \
  \begin{enumerate}
    \item $\s_{\ge \lambda}$ is a good $(\ge \lambda)$-frame by Corollary \ref{will-transfer-improved}. $\ts$ is a good $(<\infty, \ge \lambda)$-frame by Corollary \ref{infty-good-frame}.
    \item By Proposition \ref{tameness-uniq}.
    \item By Theorem \ref{full-commut}.
    \item By Theorem \ref{sym-equiv}, Proposition \ref{sym-strong-cont}, and Corollary \ref{sym-elong-infty}.
    \item By Corollary \ref{good-dimension}.
  \end{enumerate}
\end{proof}

\bibliographystyle{amsalpha}
\bibliography{tameness-frames}

\end{document}